\documentclass[reqno,12pt]{amsart}
\usepackage[
  colorlinks,
  hypertexnames=false
]{hyperref}

\usepackage{xurl}
\usepackage{fancybox,fancyhdr,graphics,epsfig}
\usepackage{subcaption}

\usepackage{amsmath,amssymb,mathrsfs,mathtools,verbatim} 
\usepackage{ae,aecompl}
\usepackage[margin=1in]{geometry}
\usepackage{microtype}
\usepackage{tikz}
\usepackage{dsfont}
\usepackage{xcolor}
\definecolor{teal}{rgb}{0,0.45,0.45}
\usepackage{amsthm}

\usepackage[
  backend=biber,
  style=alphabetic,
  maxbibnames=99,
  maxcitenames=99,
  maxalphanames=5,
  minalphanames=5,
  backref=true,
  doi=true,
  eprint=true,
  isbn=false,
  url=false
]{biblatex}

\AtEveryBibitem{\clearfield{issn}}

\DeclareDelimFormat{multicitedelim}{\addcomma\allowbreak}
\DeclareFieldFormat[article]{journaltitle}{#1}
\DeclareFieldFormat[article]{title}{\mkbibemph{#1}}% Article titles: italic, no quotation marks
\DeclareFieldFormat[article]{volume}{\mkbibbold{#1}}% Journal volume: bold
\renewbibmacro{in:}{%
  \ifentrytype{article}
    {}
    {\printtext{\bibstring{in}\intitlepunct}}}
\renewbibmacro*{journal+issuetitle}{%
  \usebibmacro{journal}%
  \setunit*{\addspace}%
  \iffieldundef{series}
    {}
    {\newunit
     \printfield{series}%
     \setunit{\addspace}}%
  \printfield{volume}%
  \setunit{\addspace}%
  \usebibmacro{issue+date}%
  \setunit{\addcomma\space}%
  \iffieldundef{number}
    {}
    {\printtext{no\adddot\space}%
     \printfield{number}}%
  \newunit}
\DeclareFieldFormat{doi}{%
  \href{https://doi.org/#1}{doi:\nolinkurl{#1}}}
\DeclareFieldFormat{eprint:arxiv}{%
  \href{https://arxiv.org/abs/#1}{arXiv:\nolinkurl{#1}}}
\renewbibmacro*{pageref}{%
  \iflistundef{pageref}
    {}
    {\setunit{\addspace}%
     \printtext{\ensuremath{\uparrow}\printlist{pageref}}}}

\newtheorem{thm}{Theorem}
\newtheorem{prop}[thm]{Proposition}
\newtheorem{lem}[thm]{Lemma}
\newtheorem{cor}[thm]{Corollary}
\newtheorem*{cor*}{Corollary}
\newtheorem*{thm*}{Theorem}
\newtheorem{exam}[thm]{Example}
\newtheorem{remk}[thm]{Remark}
\newtheorem{defn}[thm]{Definition}

\newtheorem{theorem}[thm]{Theorem}
\newtheorem{proposition}[thm]{Proposition}
\newtheorem{lemma}[thm]{Lemma}
\newtheorem{corollary}[thm]{Corollary}
\newtheorem{remark}[thm]{Remark}
\newtheorem{definition}[thm]{Definition}
\newtheorem{conjecture}[thm]{Conjecture}
\newtheorem{example}[thm]{Example}

\newcommand{\bbT}{\mathbb{T}}
\newcommand{\bbC}{\ComplexNumbers}

\newcommand{\bbF}{\mathbb{F}}
\newcommand{\bbZ}{\mathbb{Z}}

\newcommand{\mL}{{\mathcal L}}
\newcommand{\mA}{{\bf{A}}}

\newcommand{\mE}{{\mathcal E}}
\newcommand{\mC}{{\mathcal C}}

\newcommand{\mO}{{\mathcal O}}

\newcommand{\proj}{{\bf p}}

\newcommand{\ii}{{\bf i}}

\newcommand{\cp}{{\mathbb{CP}}}

\hypersetup{
	colorlinks,
	linkcolor={red!45!black},
	citecolor={blue!45!black},
	urlcolor={black}
}

\usepackage[english]{babel}

\usepackage{bookmark}

\numberwithin{equation}{section}

\newcommand{\Span}{\mathrm{Span}}
\newcommand{\Reals}{\mathbb{R}}
\newcommand{\ComplexNumbers}{\mathbb{C}}

\newcommand{\mm}{\mathfrak{m}}
\newcommand{\Vol}{\mathrm{Vol}}
\newcommand{\Area}{\mathrm{Area}}
\newcommand{\Homology}{\mathrm{H}}

\newcommand{\sumcyclic}{\sum_{\circlearrowright\,i,j,k}}
\newcommand{\surface}{S}
\newcommand{\citep}[2]{\cite[#1]{#2}} %necessary when in the [] argument of a theorem for instance
\newcommand{\blowup}{\mathrm{Bl}}

\newcommand{\ttt}{\ft}

\newcommand{\TT}{\xi} %basis of u(2)
\newcommand{\Zodd}{\bbZ_{\mathrm{odd}}}
\newcommand{\Zeven}{\bbZ_{\mathrm{even}}}
\newcommand{\AAA}{\cA}
\newcommand{\BBB}{\cB}
\newcommand{\CCC}{\cC}

\newcommand{\bC}{{\bf C}}

\newcommand{\cA}{\mathcal{A}}
\newcommand{\cB}{\mathcal{B}}
\newcommand{\cC}{\mathcal{C}}

\newcommand{\cL}{\mathcal{L}}

\newcommand{\fg}{{\mathfrak g}}

\newcommand{\fh}{{\mathfrak h}}

\newcommand{\fk}{{\mathfrak k}}

\newcommand{\ft}{{\mathfrak t}}

\newcommand{\C}{\bC}

\newcommand{\su}{\mathfrak{su}}

\newcommand{\SU}{{\rm SU}}

\newcommand{\U}{{\rm U}}

\newcommand{\Ad}{\mathrm{Ad}}

\newcommand{\End}{{\mathrm{End}}}

\renewcommand{\epsilon}{\varepsilon}

\newcommand{\Hom}{{\mathrm{Hom}}}

\newcommand{\ad}{\mathrm{ad}}

\newcommand{\del}{\partial}
\newcommand{\diag}{\mathrm{diag}}

\newcommand{\id}{\mathrm{id}}

\renewcommand{\Im}{\mathop{\mathrm{Im}}}

\newcommand{\rank}{\mathop{\mathrm{rank}}}

\renewcommand{\Re}{\mathop{\mathrm{Re}}}

\newcommand{\tr}{\mathop{\mathrm{tr}}\nolimits}

\newcommand{\vol}{\mathrm{vol}}

\newcommand{\sign}{\mathrm{sign}}

\def\<{\mathopen{}\left<}
\def\>{\right>\mathclose{}}
\def\({\mathopen{}\left(}
\def\){\right)\mathclose{}}

\newcommand{\Comment}[2][\empty]{\ifthenelse{\equal{#1}{\empty}}{\todo[color=gray!10]{#2}}{\todo[color=gray!10,#1]{#2}}}%inline comment
\allowdisplaybreaks

\title{Deformed Hermitian--Yang--Mills equation on the manifold of full flags}
\date{\today}

\author{Benoit Charbonneau}
\address[Benoit Charbonneau]{Department of Pure Mathematics and Department of Physics and Astronomy, University of Waterloo, Ontario, Canada}
\urladdr{\href{https://www.math.uwaterloo.ca/~bcharbon/}{www.math.uwaterloo.ca/~bcharbon}}
\email{\href{mailto:benoit@alum.mit.edu}{benoit@alum.mit.edu}}
\author{Gon\c{c}alo Oliveira}
\address[Gon\c{c}alo Oliveira]{Centro de An\'alise Matem\'atica, Geometria e Sistemas Din\^amicos, Departamento de Matem\'atica,
Instituto Superior T\'ecnico, Lisboa, Portugal}
\urladdr{\href{https://sites.google.com/view/goncalo-oliveira-math-webpage}{sites.google.com/view/goncalo-oliveira-math-webpage}}
\email{\href{mailto:goncalo.m.f.oliveira@tecnico.ulisboa.pt}{goncalo.m.f.oliveira@tecnico.ulisboa.pt}}
\author{Rosa Sena-Dias}
\address[Rosa Sena-Dias]{Centro de An\'alise Matem\'atica, Geometria e Sistemas Din\^amicos, Departamento de Matem\'atica,
Instituto Superior T\'ecnico, Lisboa, Portugal}
\urladdr{\href{https://www.math.tecnico.ulisboa.pt/~senadias}{www.math.tecnico.ulisboa.pt/~senadias}}
\email{\href{mailto:rsenadias@math.ist.utl.pt}{rsenadias@math.ist.utl.pt}}

\subjclass[2020]{53C07, 53C55, 32Q26}

\begin{document}
\maketitle

%===============================================================================
\begin{abstract}
	We construct the first example of a higher rank, irreducible deformed Hermitian--Yang--Mills (dHYM) connection in the small radius regime.
	
	We also construct these in the large radius regime on infinitely many different bundles and make some contributions to the rank one equation as well. In particular, we investigate solutions away from the supercritical regime, showing the existence of solutions with any possible angle, and rule out some possible stability conditions.
\end{abstract}
%===============================================================================

%===============================================================================

\tableofcontents

\section{Introduction}

\subsection{The rank one equation}

Let $(X,\omega)$ be a connected, compact K\"ahler manifold of complex dimension $n$ and $\Omega\in \Homology^{1,1}(X,\Reals)$. We say that $\kappa \in \Omega$ solves the \emph{deformed Hermitian--Yang--Mills (dHYM)} equation if there is a real constant $\theta$ such that
\begin{equation}\label{eqn:dHYM}
	\Im \left( e^{-\ii \theta } (\omega+\ii\kappa)^n \right)=0.
\end{equation}

Define
\begin{equation}
	Z_X(\Omega):= \int_X \frac{(\omega+\ii\kappa)^n}{n!}.
\end{equation} 
Suppose that $Z_X(\Omega)\ne 0$. Integrating Equation \eqref{eqn:dHYM}, we see that $\theta$ is in fact  the argument of $Z_X(\Omega)$ and depends only, mod $\pi$, on the classes $[\omega]$ and $\Omega$. 

Locally, we can always find real valued functions $\lambda_1,\ldots,\lambda_n$ describing the eigenvalues of $\kappa\omega^{-1}$.
A short computation shows that Equation \eqref{eqn:dHYM} can be then written as
\begin{equation}\label{eqn:dHYM 2}
	\Theta_\omega (\kappa):= \sum_{i=1}^n \arctan(\lambda_i) \equiv \theta \mod \pi.
\end{equation}
{One of the first difficulties that arises in studying the dHYM equation is precisely that the constant $\theta$, when defined, involves a choice that should be made coherently in $\Homology^{1,1}(X,\Reals)$. In certain circumstances, Collins and Yau (see \cite{Collins-Yau-MomentMaps}) show that it is possible to make such a choice, canonically. 
Let
\begin{equation*}
\mathcal{H}_{\Omega}:=\{\varphi\in \mC^\infty(X) :  \Re \left( e^{-\ii \theta } (\omega+\ii(\kappa+\ii\del\bar \del \varphi))^n \right)>0\}.
\end{equation*}
\begin{defn}
Let $(X^n,\omega)$ be a connected, compact K\"ahler manifold and $\Omega\in \Homology^{1,1}(X,\Reals)$. Assume that $Z_X(\Omega)\ne 0$ and $\mathcal{H}_{\Omega}$ is non-empty. Then, there is a unique $\hat\theta$ such that
\begin{enumerate}
\item The set $\mathcal{H}_{\Omega}$ is given by
\[
\mathcal{H}_{\Omega}=\{\varphi\in \mC^\infty(X) :  |\Theta_\omega(\kappa+\ii\del\bar \del \varphi)-\hat\theta|<\pi/2\}.
\]
\item The constant $\hat\theta$ equals the argument of $Z_X(\Omega)$ modulo $2\pi$.
\end{enumerate}
Such a constant is called the \emph{(analytic) lifted angle} and takes values in $(-n\frac{\pi}{2} , n\frac{\pi}{2}) \subset \Reals$.
\end{defn}
See also \cite{CollinsXieYau-dHYM-geometry-physics} for a related discussion. Under the same assumptions, it is shown in \cite{CollinsXieYau-dHYM-geometry-physics} that the lift $\hat\theta$ only depends on the class $\Omega$ for a fixed K\"ahler manifold. Using this lifted angle, one can define the notion of \emph{supercritical} or \emph{hypercritical} classes in $\Homology^{1,1}(X, \Reals)$, 
\begin{defn}
Let $(X^n,\omega)$ be a connected, compact K\"ahler manifold and $\Omega\in \Homology^{1,1}(X,\Reals)$. Assume that $Z_X(\Omega)\ne 0$ and $\mathcal{H}_{\Omega}$ is non-empty. Then, we say that $\Omega$ is \emph{supercritical} if the lifted angle of $\Omega$  takes values in $(n\frac{\pi}{2}-\pi , n\frac{\pi}{2})$. We say that $\Omega$ is \emph{hypercritical} if the lifted angle takes values in $(n\frac{\pi}{2}-\frac{\pi}{2} , n\frac{\pi}{2})$.
\end{defn}}

The dHYM equation was first introduced as the ``mirror" of the special Lagrange equation by Leung--Yau--Zaslow in \cite{Leung-Yau-Zaslow-AdvThMathPhys}, and Mari\~no--Minasian--Moore--Strominger in \cite{Nonlinear-instantons-SS-p-branes} independently. It has been the object of intense investigation in recent years; see for instance
\cite{
Jacob-Yau-specialLag-linebundle,
Collins-Jacob-Yau-forms-with-specified-Lagrangian-phase,
Chen-J-eqn-supercritical-dHYM,
Jacob-Sheu-dHYM-blowup-Pn,
Ballal-supercritical-dHYM-projective,
Collins-Shi-stability-dHYM,
Collins-Lo-Shi-Yau-stability-line-bundles-dHYM-EllipticSurfaces,
Han-Jin-rigidity-dHYM,
Han-Jin-Chern-ineq-dHYM-4d,
Huang-Zhang-Cauchy-Dirichlet-dHYM,
Pingali-note-dHYM-PDE,
Pingali-dHYM-3folds,
Sun-boundary-case-supercritical-dHYM,
Jacob-dHYM-level-sets,
Jacob-weakgeodesics-dHYM,
Lin-dHYM-Positivstellensatz,
Lin-dHYM-compact-Hermitian-mflds,
Schlitzer-Stoppa-dHYM-extended-gauge-group,
Sheu-thesis,
McCarthy-Benjamin-thesis,
Fan-notes-dHYM-large-limits,
Yamamoto-specialLag-dHYM-tropical,
Fu-Yau-Zhang-critical-LYZ-in-Kahler,
Murakami-Jeqn-dHYM-holomorphic-submersions}
 and others cited elsewhere in this paper. The equation can also be understood as an interpolation between the Hermitian--Yang--Mills equation, 
and the J-equation introduced by Donaldson in \cite{Donaldson-moment-maps-diffeomorphisms} and studied initially by Chen, Weinkove, Song, Lejmi and Szekelyhidi (\cite{Chen-parabolicflow-Kahler,Weinkove-convergence-Jflow-Kahler,Weinkove-Jflow-higher-dim-lower-boundedness-Mabuchi,SongWeinkove-convergence-J-flow-Mabuchi-energy,LejmiSzekelyhidi-Jflow-stability}). Both equations are paradigmatic in K\"ahler geometry as they exhibit the typical behaviour we see in many PDEs of interest where existence of solutions is governed by a stability condition. It is often an important step in studying such PDEs to discover what the stability condition is. This discovery can sometimes be done by relating the PDE to a moment map setup involving infinite dimensional group actions. In fact, the similarities between dHYM and the J-equation go beyond this general set up (see \cite{Chen-J-eqn-supercritical-dHYM}). Both arise as complex Hessian equations (see \cite{Fang-Ma-on-fully-nonlinear-elliptic}); however, dHYM carries an additional ambiguity associated with the phase, analogous to that of the special Lagrangian equation. For analytic background on complex Hessian equations see \cite{Szekelyhidi-fully-non-linear-elliptic,PhongPicardZhang-2ndorder-estimate-general-cplx-Hessian,HouMaWu-2nd-order-estimate-cplxHessian-eqn}. In the two cases, stability has been formulated numerically (see \cite{Chen-J-eqn-supercritical-dHYM,Datar-Pingali-numerical-criterion-Monge-Ampere,Song-Nakai-Moishezon-cplx-Hessian-eqn}) and the PDE's have been studied through associated flows. The J-flow was introduced in \cite{Chen-parabolicflow-Kahler} and has been extensively studied (see \cite{Weinkove-convergence-Jflow-Kahler,Weinkove-Jflow-higher-dim-lower-boundedness-Mabuchi,SongWeinkove-convergence-J-flow-Mabuchi-energy,Collins-Szekelyhidi-Jflow-toric}). There are several flows related to dHYM equations that have been introduced more recently in \cite{Jacob-Yau-specialLag-linebundle} and also \cite{Takahashi-TanConcavity,Fu-Yau-Zhang-new-flow-LYZ-eqn} and that are not as well understood. Similarities between the J-flow and Fu--Yau--Zhang's flow for instance become apparent in Murakami's recent work \cite{Murakami-weak-limits-Jflow-dHYM-on-Kahler-surfaces}.

In this paper we focus on the dHYM equation and numerical criteria for existence of solutions. In fact, we look at a specific manifold where dHYM can be studied explicitly.

In \cite{Collins-Jacob-Yau-forms-with-specified-Lagrangian-phase}, Collins--Jacob--Yau conjectured that, in the supercritical regime, the existence problem for the dHYM connection is governed by an algebraic stability condition which we state. We start with a definition.
\begin{defn}\label{def:Z}
Let $(X^n,\omega)$ be a connected, compact K\"ahler manifold and let $V$ be a subvariety in $X$. The \emph{central charge} of $V$ is a function $\Homology^{1,1}(X)\to \ComplexNumbers$ defined to be 
\begin{equation}
	Z_V (\Omega):=-\int_V e^{-\ii(\omega + \ii\kappa)}
\end{equation}
for any $\Omega\in \Homology^{1,1}(X,\Reals)$, and representative $\kappa\in\Omega$.
For simplicity, we can write $Z_V(\kappa)$ and $Z_V(\Omega)$ interchangeably. 
\end{defn}	
The conjecture can be stated as follows.
\begin{conjecture}[Collins--Jacob--Yau]\label{cjy_conjecture}
Let $(X^n,\omega)$ be a connected, compact K\"ahler manifold and $\Omega\in \Homology^{1,1}(X,\Reals)$. Assume that $Z_X(\Omega)\ne 0$, $\mathcal{H}_{\Omega}$ is non-empty, and $\Omega$ is supercritical. Then, the following are equivalent 
\begin{enumerate} 
\item The dHYM equation admits a solution\label{exitsdHYMsol}.
\item For every proper subvariety $V\subset X$,\label{central>0}
\begin{equation}
		\Im \left( \frac{Z_V(\Omega)}{Z_X(\Omega)} \right)>0.
	\end{equation}		
\end{enumerate}
\end{conjecture}
We now know that (\ref{exitsdHYMsol}) can only imply (\ref{central>0}) when $\Omega$ is supercritical (see \cite[Remark (1.10)]{Chen-J-eqn-supercritical-dHYM}). Though the supercritical hypothesis was not explicitly written in \cite{Collins-Jacob-Yau-forms-with-specified-Lagrangian-phase},  it was clear from the context that the conjecture was formulated with that regime in mind. In that paper, Collins--Jacob--Yau proved this conjecture among K\"ahler surfaces. In a particularly remarkable breakthrough, Chen \cite{Chen-J-eqn-supercritical-dHYM} and Chu--Lee--Takahashi \cite{Chu-Lee-Takahashi-NakaiMoishezon-dHYM}, proved the supercritical version of the Collins--Jacob--Yau conjecture for {projective manifolds}. {Conjecture \ref{cjy_conjecture} was further refined in the 3 dimensional case in \cite{Collins-Yau-MomentMaps-arxiv} (see Conjectures 8.5 and 8.7) for the supercritical regime. But we know far less about existence of solutions outside the supercritical regime; in fact, in \cite{Zhang-note-supecritical-dHYM}, Zhang constructs examples where the numerical condition holds but no supercritical dHYM solution exists.

Collins--Jacob--Yau also formulate the following conjecture characterising hypercritical classes (see also \cite{Chu-Lee-hypercritical-dHYM-revisited} for a discussion).
\begin{conjecture}[\citep{conjecture 8.7}{Collins-Yau-MomentMaps-arxiv}]\label{conj8.7CY} The set
$\mathcal{H}_{\Omega}$ is non-empty and $\Omega$ has hypercritical phase if and only if for any irreducible subvariety $V$ of $X$, $\Im(Z_V(\Omega)) > 0$.
\end{conjecture}}

In this work, we are able to write down explicit examples of deformed Hermitian--Yang--Mills classes with any possible angle $\hat\theta$. We do so by using symmetry techniques to explicitly solve the equations in the complex $3$ dimensional manifold $\bbF_2$ of full flags in $\bbC^3$. 

\begin{theorem}[see Theorem \ref{thm:Theta_surjective}]\label{thm:Surjective_INTRO}
	Let $\omega$ be a homogeneous K\"ahler metric on $\bbF_2$. Then, for all $\hat{\theta} \in \left(-\frac{3\pi}{2} , \frac{3\pi}{2}\right)$ there is a solution $\kappa$ to the dHYM equation with 
$\Theta_{\omega}(\kappa) = \hat{\theta}$. 
\end{theorem}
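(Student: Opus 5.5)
The plan is to reduce everything to linear algebra on $\mathfrak{t}$-invariant forms, using homogeneity. On $\bbF_2 = \SU(3)/T^2$, every class in $\Homology^{1,1}(\bbF_2,\Reals)$ has a unique $\SU(3)$-invariant representative. The invariant real $(1,1)$-forms are diagonal with respect to the three root spaces $\fg_{\alpha}, \fg_{\beta}, \fg_{\alpha+\beta}$ of the isotropy representation. Write the homogeneous Kähler metric as $\omega$ with coefficients $(a,b,a+b)$, $a,b>0$, on these three lines; this is the standard parametrization of invariant Kähler forms, since closedness forces additivity along the root $\alpha+\beta$. An invariant closed $(1,1)$-form $\kappa$ then has coefficients $(x,y,x+y)$ with $x,y\in\Reals$ arbitrary. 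Consequently the eigenvalues of $\kappa\omega^{-1}$ are the constants
\[
\lambda_1 = \tfrac{x}{a},\qquad \lambda_2=\tfrac{y}{b},\qquad \lambda_3=\tfrac{x+y}{a+b},
\]
and $\Theta_\omega(\kappa)=\arctan\lambda_1+\arctan\lambda_2+\arctan\lambda_3$ is constant on $\bbF_2$. Hence every invariant $\kappa$ solves \eqref{eqn:dHYM 2} pointwise, with $\hat\theta=\Theta_\omega(\kappa)$.

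The statement therefore reduces to showing that the continuous map $F(x,y)=\Theta_\omega(\kappa_{x,y})$ from $\Reals^2$ to $(-3\pi/2,3\pi/2)$ is surjective. I would argue by connectedness. The image of $F$ is connected, since $\Reals^2$ is connected, so it suffices to show that $\sup F = 3\pi/2$ and $\inf F=-3\pi/2$. Taking $x=y=t\to+\infty$ sends all three $\lambda_i\to+\infty$, so $F\to 3\pi/2$; taking $t\to-\infty$ gives $-3\pi/2$. The image is thus an interval whose closure contains both endpoints, while $F$ never attains $\pm3\pi/2$. So the image is exactly the open interval, and every $\hat\theta$ in it is realized.

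One further point needs checking: the lifted angle of the class $[\kappa]$ in the sense of the definition should coincide with this $\Theta_\omega(\kappa)$. Because $\kappa$ itself has constant phase, $\varphi=0$ lies in $\mathcal{H}_\Omega$, provided $\Re(e^{-\ii\theta}(\omega+\ii\kappa)^3)>0$ with $\theta=\Theta_\omega(\kappa)$. This holds since $(\omega+\ii\kappa)^3 = \prod(1+\ii\lambda_i)\,\omega^3$, which equals $\prod\sqrt{1+\lambda_i^2}\,e^{\ii\Theta}\omega^3$. We also need $Z_X\neq 0$, which follows by integrating this same nonvanishing quantity. Uniqueness of $\hat\theta$ in the definition then identifies the lifted angle with $\Theta_\omega(\kappa)$.

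I expect the main obstacle to be the first step: establishing precisely the structure of invariant closed $(1,1)$-forms, that is, the additivity constraint along $\alpha+\beta$ and the fact that every class has such a representative. Once that is in place, the rest is elementary.
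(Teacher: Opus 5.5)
Your proposal is correct and follows essentially the paper's proof. The paper also uses the invariant closed forms $\kappa_m$, obtained as curvatures of the homogeneous line bundles $L_{\beta_m}$ and extended to $m\in\Reals^2$; their eigenvalues $\lambda_i=-2m_i/(\epsilon_iA_i^2)$ are constant, so each $\kappa_m$ is automatically dHYM, and the paper concludes by continuity from the limits $\Theta_\omega(\kappa_m)\to\mp\frac{3\pi}{2}$ as $m_1,m_3\to\pm\infty$. Your extra check that the lifted angle agrees with $\Theta_\omega(\kappa)$ is correct, but the statement as posed does not need it.
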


We use the construction from which the above theorem arises to show that the supercritical condition is essential in Conjecture \ref{cjy_conjecture},  although we show something different from Zhang. More precisely, we show that without supercriticality, the existence of a solution to the dHYM equation does not imply the numerical criteria appearing in Conjecture \ref{cjy_conjecture} are satisfied whereas Zhang shows that the numerical criteria alone do not imply existence and supercriticality. {We are also able to give counterexamples to Conjecture \ref{conj8.7CY}. Note that when there is a solution to the dHYM equation in a given class, the space $\mathcal{H}_{\Omega}$ is automatically non-empty.}

\begin{theorem}
Let $\bbF_2$ be equipped with its K\"ahler--Einstein structure. There are infinitely many classes $\Omega \in \Homology^{1,1}(\bbF_2, \bbZ)$ admitting solutions $\kappa$ to the dHYM equation with $\Theta_\omega (\kappa)<\pi$ (thus not hypercritical), but that satisfy $\Im (Z_V(\Omega))>0$ for all irreducible subvarieties $V \subsetneq X$.
\end{theorem}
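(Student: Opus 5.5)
The plan is to work entirely with $\mathrm{SU}(3)$-invariant objects on $\bbF_2=\mathrm{SU}(3)/\bbT^2$, so that the problem becomes finite dimensional.

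\textbf{Step 1: reduction to an algebraic equation.} Invariant real $(1,1)$-forms are determined by three real numbers, one for each positive root $\alpha_{12},\alpha_{23},\alpha_{13}$. The Kähler--Einstein form has all three equal to $1$, up to scale. An invariant $\kappa$ has coefficients $(a,b,a+b)$, subject to the cocycle relation for invariant closed forms. Since $\kappa\omega^{-1}$ is diagonal in the root basis with constant eigenvalues $\lambda_1=a$, $\lambda_2=b$, $\lambda_3=a+b$, every invariant $\kappa$ automatically solves dHYM. Its angle is
\[
\Theta=\arctan a+\arctan b+\arctan(a+b).
\]
Because $\Homology^{1,1}(\bbF_2,\bbZ)\cong\bbZ^2$, the integral classes correspond to $(a,b)$ in a lattice, after suitable normalisation of $\omega$. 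In particular, every integral class carries an invariant solution.

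\textbf{Step 2: the angle condition.} We need infinitely many lattice points with $\Theta<\pi$. I would take, for instance, $a=b=m\geq1$ and show $3\arctan$-type sums stay below $\pi$ only in a bounded region. That region contains too few points, so instead I would use mixed signs, say $b<0<a$, where $\Theta$ lies in $(-\pi/2,\pi)$ on an unbounded region. A concrete family is $a=m$, $b=-1$, which gives $\Theta\to \pi/2-\pi/4+\pi/2<\pi$.

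\textbf{Step 3: positivity of the central charges.} This step is the main obstacle. We must check $\Im Z_V>0$ for all irreducible proper subvarieties $V$. Following the convention $Z_V=-\int_V e^{-\ii(\omega+\ii\kappa)}$, the leading term for $V$ of dimension $d$ involves $\ii^{d+1}(\omega+\ii\kappa)^d/d!$.
\begin{itemize}
\item For points the quantity is $-1$, with imaginary part $0$. I would have to verify the paper's normalisation, which presumably makes the constant term harmless or excludes points. Otherwise I would adjust by the standard $\ii$ convention so that points give $\Im=1>0$.
\item For curves, $\Im Z_C=\int_C\omega>0$ always holds.
\item The surfaces $S$ are the real content of the argument. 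Here $\Im Z_S$ is, up to sign, $\int_S(\kappa^2-\omega^2)/2$ plus lower order terms. Since effective divisor classes on $\bbF_2$ lie in the nef/effective cone spanned by the two pullbacks $H_1,H_2$ of hyperplane classes from $\cp^2$ and its dual, linearity reduces this to checking the quadratic inequality on classes $pH_1+qH_2$ with $p,q\ge0$. The integrand is linear in $[S]$, so it suffices to check the two generators, plus positivity of $\int_S\omega^2$ terms.
\end{itemize}

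\textbf{Step 4: computing the intersection numbers.} I would compute these using $H_1^3=H_2^3=0$ and $H_1^2H_2=H_1H_2^2=1$. The goal is to verify that along the family $(a,b)=(m,-c)$, for suitable fixed $c$ and $m\to\infty$, both generator inequalities hold. On the subvariety side the problem is then a sign check of explicit polynomials in $m$, which I expect to work for large $m$ by choosing $c$ appropriately. This yields infinitely many classes, completing the argument.
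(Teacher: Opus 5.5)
Your overall architecture matches the paper's. Invariant forms solve dHYM automatically, and for curves $\Im Z_C=\int_C\omega>0$. For surfaces, linearity in $[S]$ reduces the check to the two generators $H_1,H_2$ of the effective cone; the paper gets the same reduction from Lemma~\ref{lem:PD hypersurface}, namely $\mu_1,\mu_3\ge 0$ and $\mu_2=\tfrac{\mu_1+\mu_3}{2}$.

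\textbf{The gap: your formula for $\Im Z_S$ is wrong.} With the stated definition, $Z_S=-\int_S e^{-\ii(\omega+\ii\kappa)}=\tfrac12\int_S(\omega+\ii\kappa)^2$. Hence
\[
\Re Z_S=\tfrac12\int_S(\omega^2-\kappa^2),\qquad \Im Z_S=\int_S\omega\wedge\kappa ,
\]
exactly, with no lower-order terms. The quadratic expression you plan to sign-check is minus the \emph{real} part. This is not cosmetic: the Step 4 check fails for your family. Write $\kappa=uH_1+vH_2$. Your intersection numbers give $H_1\cdot\kappa^2=v(2u+v)$ and $H_2\cdot\kappa^2=u(u+2v)$, while $H_i\cdot\omega^2$ is a fixed constant. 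Along $u=m\to\infty$, $v=-c<0$ (or its mirror image), the first is about $-2cm$ and the second about $m^2$. The two generators therefore have opposite signs, and no overall sign makes your quantity positive on every surface.

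\textbf{The repair} is to use the linear quantity. For the K\"ahler--Einstein class $[\omega]=c_0(H_1+H_2)$ you get $H_1\cdot\omega\cdot\kappa=c_0(u+2v)$ and $H_2\cdot\omega\cdot\kappa=c_0(2u+v)$. So $\Im Z_S>0$ for every surface if and only if $u+2v>0$ and $2u+v>0$. With $u,v$ proportional to $-m_1,-m_3$, this is exactly the paper's Lemma~\ref{lem:ZV>0}: $m_1+2m_3<0$ and $2m_1+m_3<0$.

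If in addition $\min(u,v)\le 0$, one of the three $\arctan$ terms is $\le 0$ and the other two are each $<\pi/2$. Hence $\Theta<\pi$ at every scale. For example, $v=-1$ and $u\ge 3$ give infinitely many integral classes; this is the paper's Corollary~\ref{cor:nothypercritical_but_ZV>0}.

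\textbf{Two smaller points.}
\begin{enumerate}
\item Your eigenvalues in Step 1 are inconsistent. If closed invariant forms satisfy the additive cocycle relation, the K\"ahler--Einstein form has coefficients $(1,1,2)$, not $(1,1,1)$. So relative to $\omega$ the eigenvalues are $a$, $b$, $\tfrac{a+b}{2}$; in the paper, $\lambda_2=\tfrac{\lambda_1+\lambda_3}{2}$. This leaves your sign argument for $\Theta$ intact but would spoil any explicit computation.
\item You cannot renormalise $Z_V$ to make points positive. Under the stated definition $Z_{\mathrm{pt}}=-1$, so zero-dimensional $V$ must be understood as excluded, as the paper tacitly does.
\end{enumerate}
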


The classes in this result are illustrated in Figure \ref{fig:nothypercritical}.

In \cite{Chu-Lee-hypercritical-dHYM-revisited}, Chu and Lee already provided a counterexample to Conjecture \ref{conj8.7CY} in the complex $2$-dimensional setting. However, given that the conjecture had been formulated implicitly with the complex $3$-dimensional situation in mind, we find it useful to present our counterexample as well.

In \cite{Jacob-Sheu-dHYM-blowup-Pn}, the authors also write down explicit solutions to Equation \eqref{eqn:dHYM} outside the supercritical regime on the blow up of $\cp^n$ at one point. Their construction yields counter-examples to the most general version of Conjecture \ref{cjy_conjecture}. One of Jacob--Sheu's goals in \cite{Jacob-Sheu-dHYM-blowup-Pn}, is to find general algebraic stability conditions governing existence of solutions which also apply to non-supercritical $(1,1)$ classes. On the one point blow up $\blowup_p( \cp^n)$ of $\cp^n$, Jacob--Sheu establish the following.
\begin{thm}[Jacob--Sheu]
Let $\Omega$ be a class in $\Homology^{1,1}(\blowup_p( \cp^n))$ and assume $Z_X(\Omega)\neq 0$.Then there is a solution to the dHYM equation in $\Omega$ if and only if for each $k\in\{1,\ldots,n-1\}$ there is $\epsilon_k \in\{ \pm 1\}$ such that all analytic subvarieties $V^k\subset X$ of dimension $k$ satisfy 
	\begin{equation}
		\sign \Im \left( \frac{Z_{V^k}(\Omega)}{Z_{X}(\Omega)} \right) = \epsilon_k .
	\end{equation}
\end{thm}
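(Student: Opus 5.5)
This is Jacob--Sheu's theorem on $\blowup_p(\cp^n)$, and the plan is to prove it by reduction to an ODE via the $\U(n)$-symmetry, then compare both sides of the equivalence explicitly. Write $X=\blowup_p(\cp^n)$. Then $\Homology^{1,1}(X,\Reals)$ is spanned by the hyperplane class $H$ and the exceptional class $E$. By averaging we may take $\omega$ and the reference $\kappa$ to be $\U(n)$-invariant. On the dense orbit $\ComplexNumbers^n\setminus\{0\}$, invariant forms are determined by functions of $t=\log|z|^2$: we write $\omega=\ii\del\bar\del\phi(t)$ and $\kappa=\ii\del\bar\del u(t)$. The eigenvalues of $\kappa\omega^{-1}$ are then $\lambda_1=u''/\phi''$ (radial direction) and $\lambda_2=\dots=\lambda_n=u'/\phi'$ (with multiplicity $n-1$).

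Next we change variables to the momentum coordinate $x=\phi'(t)\in[a,b]$, where $a$ and $b$ are fixed by the class of $\omega$. Then $\kappa$ is encoded by a function $f(x)=u'$ with prescribed boundary values $f(a)=p$ and $f(b)=q$ determined by $\Omega$. The radial eigenvalue becomes $f'(x)$ and the remaining eigenvalues become $f/x$. Equation \eqref{eqn:dHYM 2} becomes the first-order ODE
\[
\arctan f'(x)+(n-1)\arctan\frac{f(x)}{x}=\hat\theta \pmod \pi .
\]
Equivalently, $\Im\big(e^{-\ii\hat\theta}(x+\ii f)^{n-1}(1+\ii f')\big)=0$, which integrates exactly to
\[
\Im\big(e^{-\ii\hat\theta}(x+\ii f)^n\big)=c
\]
for a constant $c$. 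The condition $f(a)=p$ fixes $c$, and $\hat\theta$ is the argument of $Z_X(\Omega)$, which is computable in closed form as a difference of the two endpoint terms. So an invariant solution exists if and only if the level curve $\{\Im(e^{-\ii\hat\theta}w^n)=c\}$ in the $w=x+\ii f$ plane contains a smooth graph over $[a,b]$ joining $a+\ii p$ to $b+\ii q$. Smoothness at the two ends, namely the boundary behaviour at the exceptional divisor and at the hyperplane at infinity, is the usual toric-type compatibility and follows from the endpoint values.

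For the stability side, the analytic subvarieties of dimension $k$ to check are generated in cohomology by linear subspaces $\cp^k$ not through $p$, proper transforms of $\cp^k$ through $p$, and linear subspaces of $E\cong\cp^{n-1}$. Each of their central charges is a polynomial expression in $(a+\ii p)$ and $(b+\ii q)$. We compute $\Im\big(Z_{V^k}/Z_X\big)$ for these generators and observe that the condition ``constant sign $\epsilon_k$ for each $k$'' is a condition on the relative position of the endpoints $w_a=a+\ii p$ and $w_b=b+\ii q$. This relative position is measured by the angles of $w_a^k$ and $w_b^k$ relative to $e^{\ii\hat\theta}$.

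The main obstacle is the last comparison: proving that the level curve of the harmonic function $\Im(e^{-\ii\hat\theta}w^n)$ through $w_a$ reaches $w_b$ as a graph exactly when these sign conditions hold for all $k$. The level set consists of $n$ branches asymptotic to rays, separated by the rays $\arg w=(\hat\theta+m\pi)/n$. I would first show that $w_a$ and $w_b$ lie on the same branch if and only if no such critical ray separates them. I would then translate ``no critical ray separates them'' into the sign conditions on $\Im(w_a^k\bar w_b^{\,n-k}e^{-\ii\hat\theta})$-type quantities for each $k$. The graph property ($x$ monotone along the branch) should follow because each branch is monotone in $x$ on the half-plane $x>0$. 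The converse, that non-invariant solutions force invariant ones, I would handle by averaging, using uniqueness of dHYM solutions in $\mathcal{H}_\Omega$ or concavity of $\Theta_\omega$ on the relevant branch.
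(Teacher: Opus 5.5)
The paper gives no proof of this statement; it only quotes it from Jacob--Sheu. Your set-up is sound: the Calabi ansatz, the momentum coordinate, the first integral $\Im\bigl(e^{-\ii\hat\theta}(x+\ii f)^n\bigr)=c$, and the reduction to the generators $\cp^k\subset E$ and the proper transforms $\widetilde{\cp^k}$ of $k$-planes through $p$. The problem is the step you postpone, turning the level-set criterion into the sign conditions. It cannot be carried out, because for $n\ge 3$ the equivalence as stated here (with $Z_V$ as in Definition~\ref{def:Z}) fails in the ``only if'' direction.

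To see this, note that $Z_V/Z_X=\frac{n!}{k!}\,\ii^{n-k}\int_V(\omega+\ii\kappa)^k\big/\int_X(\omega+\ii\kappa)^n$. So the sign attached to $V^k$ is that of $\Im\bigl(\ii^{n-k}e^{-\ii\hat\theta}\int_V(\omega+\ii\kappa)^k\bigr)$, where $\hat\theta=\arg(w_b^n-w_a^n)$. Here $\int_V(\omega+\ii\kappa)^k$ equals $w_a^k$ for $\cp^k\subset E$ and $w_b^k-w_a^k$ for $\widetilde{\cp^k}$; it is not an expression like $w_a^k\bar w_b^{\,n-k}$.

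Now take $n=3$, $\omega$ Calabi-symmetric in $bH-aE$, and $\Omega=-\varepsilon E$ with $0<\varepsilon\ll 1$. Then $w_a=a+\ii\varepsilon$, $w_b=b$, and $\hat\theta=-3a^2\varepsilon/(b^3-a^3)+O(\varepsilon^2)$. The arc of your level curve from $w_a$ to $w_b$ is a small perturbation of $[a,b]$, so a solution exists. (This also follows from the implicit function theorem at $\Omega=0$.) Yet for a line $\ell\subset E$ and the proper transform $\tilde\ell$ of a line through $p$,
\[
\Im\bigl(e^{-\ii\hat\theta}w_a\bigr)=\varepsilon\,\frac{b^3+2a^3}{b^3-a^3}+O(\varepsilon^2),\qquad \Im\bigl(e^{-\ii\hat\theta}(w_b-w_a)\bigr)=-\varepsilon\,\frac{(b-a)(b+2a)}{a^2+ab+b^2}+O(\varepsilon^2).
\]
Since $\ii^{n-k}=-1$, this gives $\Im(Z_\ell/Z_X)<0<\Im(Z_{\tilde\ell}/Z_X)$, so no $\epsilon_1$ exists. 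The case $k=n-1$ is fine by the paper's appendix; the failure is in intermediate dimensions. So the statement, read literally, cannot be proved. You need to go back to Jacob--Sheu's exact formulation before the translation step makes sense.

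Independently of this, two steps of your sketch are wrong.

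First, branches are not monotone in $x$. Parametrize the component through $w_a$ by $G=\Re(e^{-\ii\hat\theta}w^n)$, which increases by $|w_b^n-w_a^n|$ from $w_a$ to $w_b$. Then $dx/dG=\Re(e^{-\ii\hat\theta}w^{n-1})/(n|w|^{2n-2})$, so the branch has vertical tangents where $\cos\bigl((n-1)\arg w-\hat\theta\bigr)=0$ and can turn back there. For example, with $n=2$ and $\hat\theta=\pi/2$, the points $\sqrt2-\ii$ and $\sqrt5+2\ii$ lie on the right branch of $x^2-y^2=1$. That branch turns at $(1,0)$, and there is no solution. The correct existence criterion is that $w_a,w_b$ lie on the same component and $\Re(e^{-\ii\hat\theta}w^{n-1})>0$ on the closed arc between them. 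This same condition is what gives smoothness at $x=a,b$.

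Second, $\omega$ cannot be averaged, since the equation depends on $\omega$ itself and not only on $[\omega]$. Calabi symmetry of $\omega$ must be a hypothesis. With it, invariance of an arbitrary solution follows from uniqueness via the maximum principle: $g^*\kappa$ is again a solution in $\Omega$. It does not follow from averaging, which does not preserve this nonlinear equation.
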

The authors reiterate for absolute clarity (and we repeat here) that for different $k$, $\epsilon_k$ could be positive or negative but for a fixed $k$, all subvarieties of that dimension yield the same sign for $\Im \left( \frac{Z_{V^k}(\Omega)}{Z_{X}(\Omega)} \right)$.

An example of Chen in \cite{Chen-J-eqn-supercritical-dHYM} also leads to the same result and one may then wonder if the situation generalizes. One of our contributions in this article is to show that it is not the case in general. In fact, as demonstrated by Theorem \ref{thm:Intro Different Signs}, there are solutions to the dHYM equation on classes $\Omega$ and irreducible curves $C_1,C_2$ for which $ \Im \left( \frac{Z_{C_1}(\Omega)}{Z_{X}(\Omega)} \right)$ and $\Im \left( \frac{Z_{C_2}(\Omega)}{Z_{X}(\Omega)} \right)$ have different signs.
\begin{theorem}[see Corollary \ref{cor:sign quotient}]\label{thm:Intro Different Signs}
	For any K\"ahler class $K$ on $\bbF_2$ and $\Omega \in \Homology^{1,1}(\bbF_2 , \Reals)$, there is a K\"ahler form $\omega \in K$ and a $(1,1)$-form $\kappa \in \Omega$ solving the dHYM {equation}. Furthermore, there are holomorphic curves $C_1,C_2 \subset \bbF_2$ and a nonempty open set $U_K \subset \Homology^{1,1}(\bbF_2 , \Reals)$ such that
	\begin{equation}
		\sign \Im \left( \frac{Z_{C_1}(\Omega)}{Z_{\bbF_2}(\Omega)} \right) = - \sign \Im \left( \frac{Z_{C_2}(\Omega)}{Z_{\bbF_2}(\Omega)} \right) ,
	\end{equation}
	for all $\Omega \in U_K$.
\end{theorem}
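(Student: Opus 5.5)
The plan is to use the $\U(3)$-homogeneity of $\bbF_2 = \U(3)/\bbT^3$ to reduce everything to linear algebra on the isotropy representation. The real tangent space at the base point splits as $\fg/\ft = \mm_{12}\oplus\mm_{13}\oplus\mm_{23}$, three pairwise inequivalent irreducible $\bbT^3$-modules, each a complex line. Consequently every $\U(3)$-invariant real $(1,1)$-form is diagonal in this splitting:
\[
\omega = \sum_{i<j} a_{ij}\,\omega_{ij}, \qquad \kappa=\sum_{i<j} b_{ij}\,\omega_{ij}.
\]
Closedness forces the cocycle relations $a_{13}=a_{12}+a_{23}$ and $b_{13}=b_{12}+b_{23}$, and Kähler means $a_{12},a_{23}>0$. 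Invariant closed forms thus give a bijection onto $\Homology^{1,1}(\bbF_2,\Reals)\cong\Reals^2$ (as $b_2(\bbF_2)=2$), and every Kähler class $K$ contains exactly one invariant Kähler form $\omega$. For invariant $\kappa$ the eigenvalues of $\kappa\omega^{-1}$ are the constants $\lambda_{ij}=b_{ij}/a_{ij}$. Hence $\Theta_\omega(\kappa)=\sum\arctan\lambda_{ij}$ is constant and $\kappa$ automatically solves \eqref{eqn:dHYM}. This gives the first claim: for any $K$ and any $\Omega$, take the invariant representatives.

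For the sign statement, I would take for $C_1,C_2$ the two $\cp^1$-fibres of the projections $\bbF_2\to\cp^2$ and $\bbF_2\to(\cp^2)^*$; these correspond to the simple roots $\mm_{12}$ and $\mm_{23}$. For a curve, $Z_C(\Omega) = -\int_C e^{-\ii(\omega+\ii\kappa)} = \ii\int_C(\omega+\ii\kappa)$, so
\[
Z_{C_1}=\ii\,(a_{12}+\ii b_{12})\,c,\qquad Z_{C_2}=\ii\,(a_{23}+\ii b_{23})\,c,
\]
with a common normalisation $c>0$. Similarly, $Z_{\bbF_2}(\Omega)$ is a positive multiple of $-\ii^{3}\prod_{i<j}(a_{ij}+\ii b_{ij})$ up to a fixed sign convention. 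Its argument is, up to the constant shift $\pi/2$, equal to $\sum_{i<j}\arg(a_{ij}+\ii b_{ij}) = \Theta_\omega(\kappa)$, since every $a_{ij}>0$. The volume normalisation is positive by homogeneity: $(\omega+\ii\kappa)^3$ is a constant multiple of the invariant volume form.

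Writing $\phi_{ij}=\arctan(b_{ij}/a_{ij})\in(-\pi/2,\pi/2)$, the sign of $\Im(Z_{C_k}/Z_{\bbF_2})$ becomes the sign of $\sin(\phi_{C_k}-\Theta+\text{const})$. The goal is then to choose $(b_{12},b_{23})$ in an open set such that $\Theta-\text{const}$ lies strictly between $\phi_{12}$ and $\phi_{23}$ modulo $2\pi$. In that case the two sines have opposite signs, and this persists on an open set $U_K$ by continuity.

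A concrete attempt is $b_{12}=-t a_{12}$ and $b_{23}=t a_{23}$ with $t$ moderate. Then $\phi_{12}=-\arctan t$, $\phi_{23}=\arctan t$, and
\[
\phi_{13}=\arctan\!\Big(t\,\frac{a_{23}-a_{12}}{a_{12}+a_{23}}\Big).
\]
The sum $\Theta=\phi_{13}$ lies in $(-\arctan t,\arctan t)$. The main obstacle is the bookkeeping of the constant phase shift coming from $-\ii^3$ versus $\ii$. The condition must really read $\sin(\phi_{12}-\Theta)\sin(\phi_{23}-\Theta)<0$, and if the relative shift is $\pi$ rather than $0$, the inequality flips into the complementary region. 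I would compute the shift carefully from Definition~\ref{def:Z}, namely $e^{-\ii(\omega+\ii\kappa)}$ in degrees $2$ and $6$, which produces the factors $-\ii$ and $\ii$ respectively. If the naive choice fails, I would adjust $t$ large, or make $\Theta$ near $\pm3\pi/2$, so that $\Theta$ falls between the two curve angles modulo $2\pi$. Any strict inequality obtained is open in $(b_{12},b_{23})$, which yields $U_K$.
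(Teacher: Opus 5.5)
Your setup matches the paper's: invariant representatives, the two simple-root curves, and a class whose components along those two roots have opposite signs. Your concrete family in fact works. The gap is that the proposal stops at exactly the step that carries the content of the statement, and the reasoning you give there is wrong.

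\textbf{The phase shift is not an obstacle.} The shift between $Z_C$ and $Z_{\bbF_2}$ is the same for both curves. Adding $\pi$ to it multiplies both $\sin(\phi_{12}-\Theta+\text{const})$ and $\sin(\phi_{23}-\Theta+\text{const})$ by $-1$. Their product is unchanged, so nothing ``flips into the complementary region.'' Only the constant modulo $\pi$ matters, so your ``between modulo $2\pi$'' should read ``between modulo $\pi$.'' That constant is $0$ modulo $\pi$. The degree-$2$ and degree-$6$ parts of $-e^{-\ii(\omega+\ii\kappa)}$ carry the factors $\ii$ and $-\ii/3!$, whose ratio is real. (Your $-\ii^{3}$ should be $-\ii$, which is immaterial.) Concretely,
\[
\frac{Z_C}{Z_{\bbF_2}}=-(\text{positive})\,e^{\ii(\phi_C-\Theta)},\qquad\text{so}\qquad \sign\Im\Bigl(\frac{Z_C}{Z_{\bbF_2}}\Bigr)=\sign\sin(\Theta-\phi_C).
\]
For $C$ tangent to $\mm_{12}$ we have $\Theta-\phi_{12}=\phi_{13}+\phi_{23}\in(-\pi,\pi)$. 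Since $\sin(x+y)=\cos x\cos y\,(\tan x+\tan y)$ with both cosines positive, this sign is $\sign(\lambda_{13}+\lambda_{23})$. This is the criterion of Lemma~\ref{lem:centralchargequotientcurves}, which the paper obtains by expanding the quotient algebraically rather than through phases.

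\textbf{Your choice then works.} With your choice, $\lambda_{12}=-t$, $\lambda_{23}=t$ and $\lambda_{13}=ts$, where $s=(a_{23}-a_{12})/(a_{12}+a_{23})\in(-1,1)$. Hence $\lambda_{13}+\lambda_{23}=t(1+s)$ and $\lambda_{12}+\lambda_{13}=t(s-1)$ have opposite signs for every $t\neq 0$. Both quantities are linear forms in $(b_{12},b_{23})$, so the set where they have strictly opposite signs is open, and it is nonempty. This gives $U_K$. The paper's Corollary~\ref{cor:sign quotient} takes $m_3=-m_1$, which means $b_{13}=0$ and $b_{12}=-b_{23}$; this is the same idea. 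Note also that your proposed fallback would have failed. Pushing $\Theta$ toward $\pm 3\pi/2$ forces all $\lambda_{ij}$ to be large with a common sign, which makes the two signs equal.
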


While we are lacking general results concerning existence of smooth solutions outside the supercritical regime, recent progress has been made (see \cite{Murakami-weak-limits-Jflow-dHYM-on-Kahler-surfaces,Datar-Mete-Song-minimal-slopes-bubbling-cplx-Hessian-eqns,Mete-sing-formation-dHYM-cotangent-flow-blowup-CP3}) concerning the study of (potentially singular) solutions of dHYM equations in the unstable case.

\subsection{The higher rank equation}

Our main focus in this article is the higher rank deformed Hermitian--Yang--Mills equation. Let $E$ be a rank $r$ holomorphic vector bundle over $(X,\omega)$ endowed with a connection $A$ compatible with $\omega$.  We say that $A$ solves the \emph{deformed Hermitian--Yang--Mills} (dHYM) equation on $E$ with respect to $\omega$ if there is a real constant $\theta$ such that
\begin{equation}\label{eqn:rdHYM}
	\Im \left( e^{-\ii\theta}(\omega \otimes \id_E + F_A)^n \right)=0.
\end{equation}
The constant $\theta \in \Reals/\pi \bbZ$ is again determined topologically by taking the trace of both sides and integrating over $X$. 

There are examples of reducible solutions to the above equations in the work of Correa (see \cite{Correa-dHYM-higher-rank}). Correa's shows  that a connection can be both Hermitian--Yang--Mills and dHYM, or can be one of them without being the other.
Irreducible solutions were obtained in \cite{Dervan-Zcritical-Bridgeland} in the large radius regime, exploiting the fact that, in this regime, the dHYM equation ``converges'' to the HYM one. Dervan--McCarthy--Sektnan solutions occur on semistable holomorphic vector bundles. In fact, the authors show that in the large radius regime, existence of dHYM connections is equivalent to $Z$-stability, a condition which the authors define and which they prove (see Lemma (2.11) in \cite{Dervan-Zcritical-Bridgeland}) implies slope semistability. Related stability notions were introduced and studied in \cite{DelloqueNapameScarpaTipler,Keller-Scarpa-Zcritical-vectorbundles-KahlerSurfaces}. Non-trivial examples of asymptotically $Z$-stable bundles were recently given by Lara and S\'a Earp in \cite{LaraSaEarp-asymptotically-Zstable-bundles-over-proj-surfaces}.

In the small radius limit, the equation converges to the $J$-equation, which can also be studied on higher rank vector bundles. Takahashi proved in \cite{Takahashi-Jeqn-holomorphic-vector-bundles} that given an appropriate solution  to the $J$-equation, there exists a solution to the dHYM equation for sufficiently small volume.

We  make use of symmetry techniques to study both the small and large radius regime, and to find explicit dHYM connections in both regimes.

\begin{thm}[Irreducible dHYM connections in the small radius regime; see Theorem \ref{thm:Small_Radius}]
	There is a rank two complex vector bundle on the manifold $\bbF_2$ of full flags in $\bbC^3$ with the following property. For any K\"ahler form  $\omega$ on $\bbF_2$, there is $\tau>0$ such that for all $t<\tau$, there is an irreducible deformed Hermitian--Yang--Mills connection with respect to the K\"ahler form $t\omega$ on $V$. 
\end{thm}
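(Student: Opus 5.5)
Here is the plan. Work $\SU(3)$-equivariantly on $\bbF_2=\SU(3)/\bbT^2$. Take $V$ to be the homogeneous rank two bundle $\SU(3)\times_{\bbT^2}\ComplexNumbers^2$ attached to a representation of $\bbT^2$ that splits as two characters $\chi_1\oplus\chi_2$. Choose the characters so that they differ by a root $\alpha$, so that the corresponding root space can give an off-diagonal component of an invariant connection. By Wang's theorem, the $\SU(3)$-invariant connections on $V$ are the canonical connection plus a $\bbT^2$-equivariant map $\Lambda:\mathfrak m\to \End(\ComplexNumbers^2)$, where $\mathfrak m=\mathfrak t^\perp$ is the sum of three root spaces. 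With the characters chosen this way, the unitary invariant connections compatible with the holomorphic structure (type $(0,1)$ part, integrable) are parametrised by a single complex number $a$, sitting in the off-diagonal entry along the root space of $\alpha$. Up to the residual gauge torus action, $a$ is a single real parameter $x=|a|^2\ge 0$.

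Next I compute the curvature $F_a$ explicitly. It is invariant, so it is determined by its value at the base point, which is a $\mathfrak t$-invariant element of $\Lambda^{1,1}\mathfrak m^*\otimes\End(\ComplexNumbers^2)$. Since $\omega$ is homogeneous, it is diagonal in the root-space basis with three positive weights $\omega_1,\omega_2,\omega_3$. So $\omega\otimes\id+F_a$ is a block matrix of $2$-forms: its diagonal entries are $\omega_j + \ii(\text{char. weights}) + x\cdot(\text{quadratic correction})$, and its off-diagonal entries are linear in $a$. The dHYM equation \eqref{eqn:rdHYM} for $t\omega$ then reduces to an algebraic identity at one point. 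Expanding the cube, its off-diagonal component should vanish by weight considerations. The two diagonal components give two real equations in $(x,\theta)$; eliminating $\theta$ leaves one real equation $P_t(x)=0$.

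For the small radius analysis I rescale $t\omega$. As $t\to0$, the leading term of $\Im(e^{-\ii\theta}(t\omega+F)^3)$ is governed by the terms that are lowest order in $t$, namely $F^3$ and $t F^2\omega$. This is the $J$-equation limit: $\theta\to$ a fixed value, and the reduced equation becomes $P_0(x)=0$, a polynomial equation which is the equivariant $J$-equation for this bundle. I will choose the characters so that $P_0$ has a simple positive root $x_0>0$ for every homogeneous $\omega$. The plan is to check sign changes of $P_0$ between $x=0$ and $x\to\infty$ using the positivity of the $\omega_j$. Then the implicit function theorem in $(t,x)$ gives a positive root $x(t)$ for all $t<\tau$. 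To extend from homogeneous $\omega$ to an arbitrary Kähler form, I would use that every Kähler class contains a homogeneous representative, and that the statement concerns the class; if pointwise forms are genuinely required, I would handle that case by pulling back through an equivariant diffeomorphism/gauge.

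Irreducibility follows from $x>0$. A reducible unitary invariant connection would preserve a line subbundle, which forces the off-diagonal term to vanish. More carefully, the holonomy algebra contains $F$, whose off-diagonal component $\propto a\neq0$ does not commute with the diagonal part. So no parallel splitting or nonscalar parallel endomorphism exists, and this can be checked with a Lie-algebra computation.

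The main obstacle is the choice of characters. They must simultaneously (i) allow a nonzero off-diagonal integrable deformation and (ii) produce a $J$-limit polynomial with a positive, nondegenerate root for every choice of the three weights $\omega_j$. I would first try characters differing by a simple root with degrees chosen so that the diagonal curvature entries have opposite signs against $\omega$. I would then verify the root by an explicit discriminant or sign computation.
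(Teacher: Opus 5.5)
\textbf{Verdict: genuine gap.} Your set-up matches the paper's: a homogeneous $V=L_{\rho_1}\oplus L_{\rho_2}$ with $\ii(\rho_1-\rho_2)$ a simple root, Wang's theorem, a single real parameter $x=a^2$, vanishing of the off-diagonal part of $(\omega\otimes\id+F_A)^3$, and reduction to one real equation. Your weight argument for the off-diagonal part is a valid shortcut for the paper's direct computation.

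The gap is that the step which actually proves the theorem is only announced. You never specify the characters. Saying you will ``choose them so that $P_0$ has a simple positive root for every $\omega$'' is the entire content of the statement, and it is not automatic. Take the bundle the paper uses, $V=L_{\rho_1}\oplus\underline{\bbC}$ with $\rho_1=-\ii r_1$. Its reduced equation is $(2a^2)^2\AAA+2a^2\BBB+\CCC=0$ with $\AAA=2B_1^2t^2+O(t^6)$, while the $t^2$-coefficients of $\BBB$ and $\CCC$ cancel exactly. Your limit polynomial is therefore $P_0=2B_1^2(2x)^2$, which has only a double root at $x=0$, so the implicit function theorem yields nothing. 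The paper instead expands to the next order:
\[
\BBB=\bigl(B_1^6+\tfrac{16}{3}B_1^2B_2^2B_3^2\bigr)t^6+O(t^{10}),\qquad \CCC=-8B_1^2B_2^2B_3^2t^6+O(t^{10}).
\]
It then uses $\AAA\CCC<0$ to obtain a positive root. That root behaves like $2a^2\approx 2B_2B_3t^2\to0$. The solutions bifurcate from the reducible connection, which is invisible at the level of $P_0$.

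Your route can still be completed, but with a different choice of characters, and that computation has to be done. Take $(p_1,p_3)=(0,1)$, i.e.\ $V=L_{\beta_2}\oplus L_{\beta_3}$, still with $\ii(\rho_1-\rho_2)=r_1$. Lemma~\ref{lem:Coefficient_Small_Radius} then gives $\AAA=\tfrac23B_1^2t^2+O(t^6)$, $\BBB=-\tfrac43B_1^2t^2+O(t^6)$ and $\CCC=O(t^6)$. Hence $P_0\propto x(x-1)$, which has the simple root $x=1$ for every K\"ahler class, and your implicit-function argument produces irreducible solutions with $a^2\to1$. Without this explicit check the proposal does not establish the statement.

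Separately, your fallback for non-homogeneous K\"ahler forms does not work. A Moser-type diffeomorphism carrying one K\"ahler form to another in the same class does not preserve $J$. Like the paper's Theorem~\ref{thm:Small_Radius}, your argument only covers the homogeneous representative of each K\"ahler class.
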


Recall that the slope of a complex vector bundle $V$ over an algebraic manifold $X$ is defined by
\begin{equation*}
\mu(V)=\frac{\deg(V)}{\rank V} = \frac{1}{\rank(V)} \int_{X} c_1(V) \wedge \omega^2.	
\end{equation*}
The slope depends on $[\omega]$.

\begin{thm}[Irreducible dHYM connections in the large radius regime; see Theorem \ref{thm:Large_Radius}] For infinitely many $c \in \Homology^2(\bbF_2,\Reals)$, there is a rank two complex vector bundle $V=L_1 \oplus L_2$ with $c_1(V)=c$ such that for any K\"ahler form $\omega$ whose K\"ahler class lies in the open cone 
\begin{equation*}
	\mathcal{K}=\{c\in \Homology^{1,1}(\bbF_2,\Reals), \, \text{K\"ahler class} :  \mu(L_{2}) > \mu(V) > \mu(L_{1})\},
\end{equation*}
there is $T>0$ such that for all $t>T$, there is an irreducible deformed Hermitian--Yang--Mills connection with respect to the K\"ahler form $t\omega$ on $V$. 
\end{thm}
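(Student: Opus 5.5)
We restrict to $\SU(3)$-invariant connections on a homogeneous rank two bundle over $\bbF_2=\SU(3)/T^2$ and reduce the large radius dHYM equation, via the Dervan--McCarthy--Sektnan mechanism, to a perturbation of the Hermitian--Yang--Mills equation.

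\textbf{Choice of bundle and ansatz.} Line bundles on $\bbF_2$ are homogeneous, $L_\lambda=\SU(3)\times_{T^2}\bbC_\lambda$ for weights $\lambda$ of $T^2$. Take $V=L_1\oplus L_2$, with $L_1=L_\lambda$ and $L_2=L_{\lambda+\alpha}$ for a root $\alpha$. By Wang's theorem, $\SU(3)$-invariant connections on $V$ are described by $T^2$-equivariant linear maps $\mathfrak m\to\mathfrak u(2)$, where $\mathfrak m=\mathfrak{su}(3)\ominus\ft$. The off-diagonal part pairs the root space of $\alpha$ with $\Hom(L_1,L_2)$, so the invariant connections form a one complex parameter family $A_a=A_0+a\,\Phi$. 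Here $A_0$ is the reducible canonical connection and $\Phi$ is an invariant $(0,1)$-form with values in $\Hom(L_1,L_2)$. When $\bar\partial_{A_0}\Phi=0$ the holomorphic structure is a nonsplit extension $0\to L_1\to\mathcal V\to L_2\to0$, or the reverse. The gauge group $T^2$ rotates $a$, so the unknown is $s=|a|^2\ge0$. We use $\mu(L_2)>\mu(L_1)$, which is what a nonsplit extension with $L_1$ as subbundle needs in order to be stable. Infinitely many admissible $\lambda$, hence infinitely many $c=c_1(V)=2\lambda+\alpha$, come from shifting $\lambda$ by weights, since $\alpha$ can be kept fixed.

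\textbf{Reduction to an algebraic equation.} Every form involved is invariant, so at the base point $\omega$, $F_{A_a}$ and the equation
\[
\Im\left(e^{-\ii\theta_t}(t\omega\otimes\id+F_{A_a})^3\right)=0
\]
become finite-dimensional expressions in $s$ and $t$. Let $u_j$ be the curvature of $L_j$ in $\ft^*$-coordinates. The diagonal entries of $F_{A_a}$ are $\ii(u_j\pm s\,\beta)$, with $\beta$ the invariant $(1,1)$-form from $[\Phi,\Phi^*]$. The off-diagonal entry vanishes by holomorphicity. Expand in $1/t$: the top order gives $\Lambda F$, and the next orders give the $F^2$ and $F^3$ corrections.

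Only the $\mathfrak{su}(2)$-diagonal component of the equation is nontrivial, because the trace fixes $\theta_t$. It reduces to a single real equation
\[
f_t(s)=t^2\,\big(P(s)\big)+t\,Q(s)+R(s)=0.
\]
Here $P(s)=\mu(L_2)-\mu(L_1)-c\,s$, up to a positive constant, with $c>0$; this is the HYM (moment map) equation for the extension. $Q$ and $R$ are polynomials of degree at most $3$.

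\textbf{Solving for large $t$.} The slope condition $\mu(L_2)>\mu(V)>\mu(L_1)$ is exactly $\mu(L_2)-\mu(L_1)>0$. So $P$ has a unique positive root $s_0$ with $P'(s_0)\ne0$, which is the irreducible HYM connection. We divide by $t^2$ and apply the implicit function theorem in $\epsilon=1/t$ at $(s,\epsilon)=(s_0,0)$. This gives a smooth branch $s(t)\to s_0>0$ for $t>T$. Since $s(t)>0$, the connection is irreducible: the off-diagonal term is nonzero, so no invariant splitting exists, and a non-invariant reducing splitting would average to an invariant one.

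\textbf{Main obstacle.} The hard step is the explicit computation of the invariant curvature:
\begin{itemize}
\item confirming that $\bar\partial_{A_0}\Phi=0$, so that $F^{0,2}=0$ holds along the whole family;
\item confirming that the off-diagonal part of the dHYM equation vanishes identically, so that one real equation suffices;
\item checking that $\beta$ has the correct sign relative to $\omega$, so that $c>0$.
\end{itemize}
We would first do this on the Kähler--Einstein metric using the root-space frame, and then extend to general invariant $\omega$. The open-cone condition on $[\omega]$ should enter only through the sign of $P(0)$.
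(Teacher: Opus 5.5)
Your strategy works and matches the paper's set-up up to the final step. Two claims in your set-up are wrong as written, though neither breaks the argument.

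\textbf{The off-diagonal curvature is not zero.} Holomorphicity only kills $F^{0,2}=a\,\bar\partial_{A_0}\Phi$. The off-diagonal $(1,1)$-part $a\,\partial_{A_0}\Phi-\bar a\,\bar\partial_{A_0}\Phi^*$ survives: it is the paper's $F_2=aR_{23}$, $F_3=aI_{23}$ (Lemma \ref{lem:Curvature r_1}). It feeds into the diagonal of $(t\omega\otimes\id+F_{A_a})^3$ through terms such as $\omega\wedge F_2^2$ and $F_1\wedge F_2^2$ (Lemma \ref{lem:componentsE0E1}). This only changes your lower-order terms $Q,R$, since $\omega^2\wedge R_{23}=0=\omega^2\wedge I_{23}$, so the leading HYM term is unaffected.

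Your second ``obstacle'' holds for a different reason. $\bbT^2$ acts trivially on $\Lambda^6\mm^*$ and with nonzero weights $\pm\alpha$ on the off-diagonal entries of $\End(V)$. Hence every invariant $\End(V)$-valued top form is diagonal; the paper instead checks $E_2=E_3=0$ by hand. The nonzero off-diagonal curvature also gives irreducibility more cleanly than averaging: for $a\neq0$ the forms $F_1,F_2,F_3$ are pointwise linearly independent, so the holonomy algebra contains $\su(2)$.

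\textbf{Not every root works.} For $\alpha=\pm r_2$ one has $\bar\partial_{A_0}\Phi\neq0$ (Lemma \ref{lem:Hol bundle}). You must take $\alpha=\pm r_1$ or $\pm r_3$; for these $\mathcal K\neq\emptyset$ by Corollary \ref{cor:Cones}. The sign of $\alpha$ must be chosen so that the $(0,1)$-part of $\Phi$ lies in $\Hom(L_2,L_1)$. As you wrote it, a $(0,1)$-form in $\Hom(L_1,L_2)$ makes $L_2$ the subbundle, and then $P$ has no positive root on $\mathcal K$. Once $L_1$ is the subbundle, $c>0$ needs no computation: the second fundamental form raises $\ii\Lambda F$ on the $L_1$ entry and lowers it on the $L_2$ entry by a positive multiple of $s$.

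\textbf{Comparison of the final step.} The paper does not perturb off the HYM solution. It turns the equation into the exact quadratic $\mathcal A(2a^2)^2+\mathcal B(2a^2)+\mathcal C=0$, valid for every $t$. For $A_i=tB_i$ and $t\to\infty$ it reads off $\mathcal A,\mathcal B>0$ and $\sign\mathcal C=\sign(\mu(L_{\rho_1})-\mu(V_\rho))$. So the explicit root $(-\mathcal B+\sqrt{\mathcal B^2-4\mathcal A\mathcal C})/2\mathcal A$ is positive exactly on the cone. After dividing by $t^{10}$, the top-order part $2s\,\mathcal B_0+\mathcal C_0=0$ is your $P(s)=0$, and $\mathcal B_0>0$ is your $c>0$. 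The two arguments therefore coincide at leading order.

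Your implicit-function argument needs only this leading term and smoothness of the corrections, and it explains the cone as the stability condition for the extension. The paper's exact quadratic costs the full computation of $E_0,E_1$. In exchange it gives the solution in closed form and shows the other root is negative. It is also reused unchanged in the small radius regime (Theorem \ref{thm:Small_Radius}), where there is no HYM limit to perturb from.
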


\begin{remark}
	The vector bundle $V$ splits as $L_1 \oplus L_2$ as a complex vector bundle. It does not split as a holomorphic vector bundle. Indeed, if  $V\simeq L_1 \oplus L_2$ as holomorphic vector bundles, the condition $\mu(L_{2}) > \mu(V) > \mu(L_{1})$ would imply that $V$ is unstable. However, we know from \cite{Dervan-Zcritical-Bridgeland} that this is incompatible with the existence of dHYM connections in the large radius regime.
\end{remark}

{\subsection{Organisation} In Section \ref{f2} we review the geometry of the manifold $\bbF_2$. We describe the manifold of full flags as a homogeneous manifold, study its cohomology, subvarietes, and its line bundles. At the end of the section we determine forms representing Poincar\'e duals of the homology classes determined by some relevant subvarieties.  Section \ref{1dHYM} is devoted to studying the rank 1 deformed Hermitian--Yang--Mills equation. In our setting, we show this equation always admits solutions {for any K\"ahler class and any K\"ahler form}. We then focus on Collins--Yau and Jacob--Sheu conjectures concerning existence of solutions both in and outside the supercritical/hypercritical regime. To that end we calculate central charges for relevant subvarieties. Finally, in Section \ref{2dHYM} we study the deformed Hermitian--Yang mills equation on rank $2$ bundles over $\bbF_2$ and obtain some new irreducible solutions. }

\subsection{Acknowledgments}
The first author acknowledges the support of the Natural Sciences and Engineering Research Council of Canada (NSERC), RGPIN-2019-04375. The second and third authors were partially funded by Funda\c c\~ao para a Ci\^encia e Tecnologogia (FCT) and the PRR through projects UID/04459/2025 and UID/PRR/04459/2025.

We thank Jason Lotay for bringing the references \cite{Fowdar-examples-deformed-Spin7-instantons,Fowdar-explicit-abelian-instantons-S1-inv-Kahler-6fld} to our attention.

\section{The geometry of $\bbF_{2}$}\label{f2}
The manifold $\bbF_2$ consists of the set of pairs of planes and lines in $\bbC^3$ such that the former contains the latter. Namely
\begin{equation*}
\bbF_2=\{(\mathcal{P},l) : \mathcal{P}\in \text{Gr}(2,3), l\in \cp^2,l\subset \mathcal{P}\}. 	
\end{equation*}
There are two natural projections $\proj_1\colon \bbF_2\rightarrow  \text{Gr}(2,3)\simeq \cp^2$ and $\proj_2\colon \bbF_2\rightarrow   \cp^2$.
\subsection{$\bbF_2$ as a Homogenous space}\label{ss:Hermitian structures}
As a way to understand the algebraic structure in $\bbF_2$ let us give an alternative description for the space. Consider the subvariety of $\bbC^6$ 
\begin{equation}
{\{(u,v) : u, v\in \bbC^3\setminus\{0\}\text{ and }u\cdot v=0\}}.	
\end{equation}
 There is a natural holomorphic action of ${\bbC^*\times \bbC^*}$ on this set and we define the complex structure on $\bbF_2$ so that the following map is a biholomorphism 
\begin{align*}
\frac{\{(u,v):u, v\in \bbC^3\setminus\{0\}:u\cdot v=0\}}{\bbC^*\times \bbC^*}&\rightarrow \bbF_2\\
(u,v) \,\text{mod}\, (\bbC^*)^2&\mapsto (u^\perp,\Span(v)).
\end{align*}
This structure is such that $\proj_1$ and $\proj_2$ are holomorphic. 

From this description of $\bbF_2$ we also see that  $SU(3)$ acts transitively on $\bbF_2$ with isotropy $\bbT^2$: 
\begin{equation*}
M(u,v) \,\text{mod}\, (\bbC^*)^2=(Mu,Mv) \,\text{mod}\, (\bbC^*)^2,\quad \forall M\in SU(3).	
\end{equation*}
We can identify $\bbF_2$ with $\SU(3)/ \bbT^2$, where $\bbT^2$ is the maximal torus of $\SU(3)$.  
We can be more explicit. When we write $M=\left(u\,v\,w\right)$, we mean that $u,v,w$ denote the $3$ columns of the matrix $M$.
We have the following bijection  
\begin{align*}
{\bf \Phi}\colon\SU(3)/ \bbT^2 &\rightarrow \bbF_2\\
M=\left(u\,v\,w\right) \,\text{mod}\, \bbT^2 &\mapsto (u^\perp,\Span(v)).
\end{align*}
This map shows that the homogenous manifold $\SU(3)/ \bbT^2$ admits a complex structure. 

From the homogeneous point of view, $\bbF_2$ has been studied by Bryant (see \cite{Bryant2006}).  Any $\bbT^2$-invariant tensor on $\bbF_2$ can be described via its pull back to $\SU(3)$ by the projection $\proj\colon \SU(3) \mapsto \SU(3)/\bbT^2$. 
The Maurer--Cartan $1$-form $g^{-1} dg$ on $\SU(3)$  takes values in the Lie algebra $\su(3)$ of $SU(3)$. To describe this form, we use the standard notation that $L_M$ represents left multiplication on $SU(3)$ by the matrix $M\in SU(3)$. We have for $M\in SU(3)$ and $v_M\in T_MSU(3)$ that
\begin{equation*}
(g^{-1} dg)_M(v_M)=(L_{M^{-1}})_*v_M.
\end{equation*}
 This form is $SU(3)$-left-invariant in the sense that
\begin{equation*}
( L_M)^*(g^{-1} dg)=g^{-1} dg, \; \forall M\in SU(3).	
\end{equation*}

The Lie algebra of $SU(3)$ is given by
\begin{equation*}
\su(3)=\{M\in GL(3,\bbC) :  M+\overline{M}^T=0, \text{Tr}(M)=0\}.	
\end{equation*}

Following \cite[Sec.~4.2]{Bryant2006}, we implicitly define the left-invariant 1-forms  $\lbrace \eta_i , \theta_i , \beta_i \rbrace_{i=1}^3 $ by the equation
\begin{equation}\label{eq:Maurer_Cartan_Form}
	g^{-1} dg =  \left[ \begin {array}{ccc} \ii\beta_{{1}}&\theta_{{3}}+\ii\eta_{{3}}&-
	\theta_{{2}}+\ii\eta_{{2}}\\ \noalign{\medskip}-\theta_{{3}}+\ii\eta_{{3}}
	&\ii\beta_{{2}}&\theta_{{1}}+\ii\eta_{{1}}\\ \noalign{\medskip}\theta_{{2}
	}+\ii\eta_{{2}}&-\theta_{{1}}+\ii\eta_{{1}}&\ii\beta_{{3}}\end {array}
	\right] .
\end{equation}
Note that $\beta_1+\beta_2+\beta_3=0$. The matrix-valued one-form satisfies the so-called Maurer--Cartan equation $d(g^{-1} dg)=-g^{-1} dg\wedge g^{-1} dg$. Explicitly,
\begin{equation}\label{eq:Maurer_Cartan_eqs}
\begin{aligned} 
d\beta_1&=2\left(\theta_2\wedge \eta_2-\theta_3\wedge \eta_3\right),\\
d\beta_2&=2\left(\theta_3\wedge \eta_3-\theta_1\wedge \eta_1\right),\\
d\beta_3&=2\left(\theta_1\wedge \eta_1-\theta_2\wedge \eta_2\right),\\
d\theta_1&=(\beta_2-\beta_3)\wedge \eta_1+\theta_2\wedge \theta_3-\eta_2\wedge \eta_3,\\
d\theta_2&=(\beta_3-\beta_1)\wedge \eta_2-\theta_1\wedge \theta_3+\eta_1\wedge \eta_3,\\
d\theta_3&=(\beta_1-\beta_2)\wedge \eta_3+\theta_1\wedge \theta_2-\eta_1\wedge \eta_2,\\
d\eta_1&=(\beta_3-\beta_2)\wedge \theta_1-\theta_2\wedge \eta_3-\eta_2\wedge \theta_3,\\
d\eta_2&=(\beta_1-\beta_3)\wedge \theta_2+\theta_1\wedge \eta_3+\eta_1\wedge \theta_3,\\
d\eta_3&=(\beta_2-\beta_1)\wedge \theta_3-\theta_1\wedge \eta_2-\eta_1\wedge \theta_2.
\end{aligned}
\end{equation}

For $i=1,2,3$, let
\begin{equation*}
a_i:=\theta_i+\ii \eta_i.
\end{equation*}
We summarize the last 6 equations of Equation \eqref{eq:Maurer_Cartan_eqs} by
\begin{equation}\label{eq:Maurer_Cartan_summary}
d a_i=-\ii (\beta_j-\beta_k)\wedge a_i+\overline{a_j\wedge a_k},	
\end{equation}
for cyclic permutations $(i,j,k)$ of $(1,2,3)$.

Next, we fix a set of simple roots $S=\lbrace{ r_i \rbrace}_{i=1}^3 \subset (\ttt^2)^*$ of $\SU(3)$ determined by
\begin{align*}
	r_1&=\ii\beta_2-\ii\beta_3=-\ii\beta_1 -2\ii\beta_3, \\ 
	r_2&=\ii\beta_3-\ii\beta_1, \\ 
	r_3&=\ii\beta_1 -\ii\beta_2=2\ii \beta_1+\ii\beta_3,
\end{align*}
and notice that these satisfy $r_1+r_2+r_3=0$. Then, the real component of the root spaces $\mm_i= (\mathfrak{sl}_{r_i}(3, \bbC) \oplus \mathfrak{sl}_{-r_i}(3, \bbC) ) \cap \su(3) $ are respectively given by
\begin{equation*}
	\mm_1^* = \langle \eta_1 , \theta_1 \rangle , \ \mm_2^* = \langle \eta_2 , \theta_2 \rangle , \ \mm_3^* = \langle \eta_3 , \theta_3 \rangle , 
\end{equation*}
and we shall consider $\mm$ to be the complement to the isotropy $\ttt^2 \subset \su(3)$ such that \begin{equation*}
	\mm^* =\mm^*_1 \oplus \mm^*_2 \oplus \mm^*_3.
\end{equation*}

It is useful to label elements of $\su(3)$, and more generally elements of $\mathfrak{u}(n)$. Let
\begin{equation}\label{eq:DEFmatrices}
	\begin{aligned}
		D_i&:=\diag(0,\ldots,0,\ii,0,\ldots,0),\quad\text{ (with $\ii$ in $i^{\text{th}}$ position)},\\
		E_{ij}&:=\begin{pmatrix}\text{matrix whose only non-zero entries are  }\\
						 \text{$1$ in position $(i,j)$ and $-1$ in position $(j,i)$}\end{pmatrix},\\
		F_{ij}&:=(\text{matrix whose only non-zero entries are $\ii$ in positions $(i,j)$ and $(j,i)$}).
	\end{aligned}
\end{equation}
Context should dictate the format of the matrices. We can say, for instance, that 
\[(D_1,D_2,D_3,E_{23},F_{23},E_{31},F_{31},E_{12},F_{12})\]
is a ordered basis of $\mathfrak{u}(3)$ dual to the ordered cobasis
\[(\beta_1,\beta_2,\beta_3,\eta_1,\theta_1,\eta_2,\theta_2,\eta_3,\theta_3).\] 
Let 
\begin{equation}
	\label{eq:basisofsu3}
	T_1:=\ii(D_2-D_3), \quad T_2:=\ii(D_3-D_1), \quad T_3:=\ii(D_1-D_2).
\end{equation} Though not-linearily independent, $\{T_1,T_2,T_3\}$ spans $\ttt^2$. 

The Maurer--Cartan equations allow us to verify that the forms $\theta_i,\eta_i$ do not descend to the quotient $\SU(3)/\bbT^2$. Indeed,
\begin{align}
	\mL_{T_1}\theta_1&=\iota_{T_1}d\theta_1= \iota_{T_1}\bigl((\beta_2-\beta_3)\wedge \eta_1\bigr)=2\eta_1\label{eqn:LieT1theta1},\\
	\mL_{T_1}\eta_1&=\iota_{T_1}d\eta_1= \iota_{T_1}\bigl((\beta_3-\beta_2)\wedge \theta_1\bigr)=-2\theta_1\label{eqn:LieT1eta1}.
\end{align}
Similarly, $\mL_{T_i}\theta_i=2\eta_i$ and $\mL_{T_i}\eta_i=-2\theta_i$.

We would like to consider $\SU(3)$-invariant almost complex structures $J$. 

\begin{prop}\label{lemma:invariant_complx_str}
Up to the action of the Weyl group there are only two $SU(3)$-invariant almost complex structures on $\bbF_2$, one integrable and one non-integrable.
\end{prop}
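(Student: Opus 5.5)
An $\SU(3)$-invariant almost complex structure on $\bbF_2=\SU(3)/\bbT^2$ is the same as an $\Ad(\bbT^2)$-invariant endomorphism $J$ of $\mm$ with $J^2=-1$. I would therefore reduce everything to linear algebra on $\mm^*=\mm_1^*\oplus\mm_2^*\oplus\mm_3^*$ and then check integrability using the Maurer--Cartan equations \eqref{eq:Maurer_Cartan_eqs}.

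\textbf{Step 1: invariance forces a root-space splitting.} The torus acts on each $\mm_i^*=\langle\eta_i,\theta_i\rangle$ by rotation with weight $r_i$. This follows from \eqref{eqn:LieT1theta1}--\eqref{eqn:LieT1eta1} and their analogues, with $\mL_{T_j}$ acting on $\mm_i^*$ by $\pm2$ or $\pm1$ times the rotation $\theta_i\mapsto\eta_i$, $\eta_i\mapsto-\theta_i$. The weights $r_1,r_2,r_3$ are pairwise non-proportional, even up to sign, so the three real representations $\mm_i$ are pairwise non-isomorphic. Each is irreducible of complex type, with commutant $\bbC$. By Schur's lemma, $J$ preserves each $\mm_i$ and acts on it as an element of $\bbC$ squaring to $-1$. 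Hence $J|_{\mm_i^*}=\epsilon_i J_0$, where $\epsilon_i\in\{\pm1\}$ and $J_0$ is the standard rotation. In complex terms, $a_i=\theta_i+\ii\eta_i$ or its conjugate $\bar a_i$ is of type $(1,0)$ according to $\epsilon_i$. This gives $2^3=8$ invariant almost complex structures.

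\textbf{Step 2: Weyl group orbits.} The Weyl group $S_3$ acts on $\bbF_2$ by $\SU(3)$-equivariant diffeomorphisms, namely right multiplication by permutation matrices normalising $\bbT^2$. It permutes the root spaces and may flip signs, sending the root set $\{\pm r_i\}$ to itself. Conjugation $J\mapsto -J$ flips all three $\epsilon_i$.

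A choice $(\epsilon_1,\epsilon_2,\epsilon_3)$ amounts to choosing one root $\epsilon_i r_i$ from each pair $\pm r_i$. There are two cases.
\begin{enumerate}
\item The three chosen roots sum to zero: either $(\epsilon_i)=(1,1,1)$ or $(-1,-1,-1)$. This gives 2 structures, since $r_1+r_2+r_3=0$.
\item The three chosen roots form a positive system: these are the 6 remaining choices, which are simply transitively permuted by the Weyl group.
\end{enumerate}
So the six choices in case 2 form one Weyl orbit. In case 1, the two choices are exchanged by $J\mapsto-J$, or by an odd Weyl element, which acts by a sign on the roots. I would verify this carefully with explicit permutation matrices, and this bookkeeping is the main point where a mistake could occur. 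The outcome is exactly two classes.

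\textbf{Step 3: integrability.} I would use the summarised Maurer--Cartan relation \eqref{eq:Maurer_Cartan_summary}, $da_i=-\ii(\beta_j-\beta_k)\wedge a_i+\overline{a_j\wedge a_k}$.

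For the case-1 structure with all $a_i$ of type $(1,0)$, the term $\overline{a_j\wedge a_k}$ is a $(0,2)$-form. So $d\Lambda^{1,0}\not\subset\Lambda^{2,0}\oplus\Lambda^{1,1}$, and this structure is non-integrable (it is the nearly-K\"ahler one).

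For a case-2 representative, take $(\epsilon_1,\epsilon_2,\epsilon_3)=(1,1,-1)$, so the $(1,0)$-forms are $a_1,a_2,\bar a_3$. Then:
\begin{itemize}
\item $da_1$ contains $\overline{a_2\wedge a_3}=\bar a_2\wedge\bar a_3$, which is of type $(1,1)$;
\item $da_2$ contains $\bar a_3\wedge\bar a_1$, of type $(1,1)$;
\item $d\bar a_3$ contains $a_1\wedge a_2$, of type $(2,0)$;
\item the $\beta$-terms are pullbacks from the isotropy and are harmless after descending.
\end{itemize}
Hence the ideal generated by the $(1,0)$-forms is closed under $d$, and the structure is integrable by Newlander--Nirenberg. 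Alternatively, this structure is the one induced by the biholomorphism ${\bf \Phi}$.
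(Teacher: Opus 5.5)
Your proof is correct and follows the same route as the paper. Schur's lemma reduces $J$ to a sign choice $\epsilon_i=\pm1$ on each root space, the Weyl group leaves two orbits (all signs equal versus not all equal), and the Maurer--Cartan relations show that the term $\overline{a_j\wedge a_k}$ obstructs integrability exactly in the all-equal case. The differences are refinements rather than a new route: your commutant argument covers every complex structure on $\mm_i$ at once, where the paper uses the ansatz $\alpha_i=A_i(\eta_i+\ii\epsilon_i\theta_i)$ and imposes $\mathcal{L}_{T_i}J=0$; your positive-system count justifies the orbit decomposition, which the paper only asserts; and you check one representative per orbit instead of deriving the paper's general criterion $\epsilon_1\epsilon_2\epsilon_3+\epsilon_1+\epsilon_2+\epsilon_3=0$.
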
 
This result is well known; see for instance \cite[Prop.~13.4 and Prop.~13.8]{Borel-Hirzebruch-characteristic-classes-homogeneous-spaces-I}). 
\begin{proof} 
Evaluating any such $J$ at the identity coset and extending it by left invariance one obtains an $(\Ad,\bbT^2)$-invariant map $J\colon \mm \to \mm$ with $J^2=-\id_{\mm}$. From Schur's lemma, any such map must preserve the root spaces $\mm_i$. We describe any such $J$ by fixing a trivialisation for the pullback bundle $p^*(\Lambda^{1,0}_{\bbC})$ over $\SU(3)$ namely
\begin{equation*}
\Span\{\alpha_1,\alpha_2,\alpha_3\}=p^\ast (\Lambda^{1,0}_{J}),	
\end{equation*}
with 
\begin{equation}\label{eq:Complex_Structures_Flag}
	\begin{aligned}
		\alpha_1 & :=  A_1(\eta_1 + \ii \epsilon_1 \theta_1), \\  
		\alpha_2 & :=  A_2(\eta_2 + \ii \epsilon_2 \theta_2), \\  
		\alpha_3 & :=  A_3(\eta_3 + \ii \epsilon_3 \theta_3),
	\end{aligned}
\end{equation}
for any $A_1,A_2,A_3 \in \Reals^+$ and $\epsilon_1, \epsilon_2, \epsilon_3 \in\Reals$.

Note that
\begin{align*}
J\eta_i&=-\epsilon_i \theta_i,\\
J\theta_i&=\frac{\eta_i}{\epsilon_i}.
\end{align*}
The invariance of $J$ implies that $\mL_{T_i}J=0$ for the elements $T_1,T_2,T_3$ of $\su(3)$ defined by Equation \eqref{eq:basisofsu3}. Using Equations \eqref{eqn:LieT1theta1} and \eqref{eqn:LieT1eta1}, we find
\begin{align*}
	0&=J(\mL_{T_1}\theta_1)-\mL_{T_1}(J\theta_1)\\
	 &= 2J\eta_1-\mL_{T_1}\bigl(-\frac{\eta_1}{\epsilon_1}\bigr)\\
	 &=-2\epsilon_1\theta_+\frac{2}{\epsilon_1}\theta_1\\
	 &=\frac{2(1-\epsilon_1^2)}{\epsilon_1}\theta_1,
\end{align*}
and thus $\epsilon_1=\pm1$. Cyclicly permuting $(1,2,3)$ in the above proof yields similarly that $\epsilon_2=\pm1$ and $\epsilon_3=\pm1$.

Recall that the Weyl group of $SU(3)$ is generated by the reflections $p_1,p_2,p_3$ defined via 
\begin{align*}
p_1(r_1,r_2,r_3)&=(-r_1,-r_3,-r_2),\\
p_2(r_1,r_2,r_3)&=(-r_3,-r_2,-r_1),\\
p_3(r_1,r_2,r_3)&=(-r_2,-r_1,-r_3).
\end{align*}
In particular, $\sigma = p_2 \circ p_1$ is an element of order $3$ which cyclically permutes the roots $r_1,r_2,r_3$.
The Weyl group is acting the quotient  $\bbF_2$, and 
 induces an action on the space of almost complex structures determined by $(\epsilon_1,\epsilon_2,\epsilon_3)$.  Up to the action of the Weyl group there are only two invariant almost complex structures, one where all signs $\epsilon_i$ are equal, and one where they are not all equal.

Given an almost complex structure $J$ determined by (\ref{eq:Complex_Structures_Flag}), we can determine whether it is integrable by computing the projection $\pi^{0,2}d\alpha_i$ of $d\alpha_i$ on $p^*\Lambda^{0,2}_\bbC$. For any cyclic permutation $(i,j,k)$ of $(1,2,3)$, we have
\begin{align*}
	\pi^{0,2}d\alpha_i &= -\ii\frac{(\epsilon_1\epsilon_2\epsilon_3+\epsilon_1+\epsilon_2+\epsilon_3)}{4\epsilon_j\epsilon_k}\frac{A_i}{A_jA_k}\overline{\alpha}_j\wedge\overline{\alpha}_k.
\end{align*}
Those projections must be zero for the almost complex structure to be integrable. Thus $J$ is integrable if and only if
\begin{equation}\label{eq:Integrable_Flag}
	\epsilon_1\epsilon_2\epsilon_3+\epsilon_1+\epsilon_2+\epsilon_3 =0.
\end{equation}
 Inserting $(\epsilon_1,\epsilon_2,\epsilon_3)=(1,-1,1)$ into Equation \eqref{eq:Integrable_Flag}, we find an integrable complex structure, and inserting $(\epsilon_1,\epsilon_2,\epsilon_3)=(1,1,1)$ we find an non-integrable one. 
\end{proof}

\begin{definition} Let  $J^i$ be the integrable complex structure obtained by taking $(\epsilon_1,\epsilon_2,\epsilon_3)=(1,-1,1)$ and $J^{ni}$ be the non-integrable complex structure obtained by taking $(\epsilon_1,\epsilon_2,\epsilon_3)=(1,1,1)$.
\end{definition}

 Now we consider the almost Hermitian structure determined by setting
\begin{equation}\label{eq:SU(3)structure_Flag1}
	\omega :=  \frac{\ii}{2} (\alpha_1 \wedge \overline{\alpha}_1 + \alpha_2 \wedge \overline{\alpha}_2 + \alpha_3 \wedge \overline{\alpha}_3 ).
\end{equation}
Note that $\omega$ descends to a two-form on $\bbF_2$. In fact, every $\alpha_i\wedge\overline\alpha_i$ descend to $\bbF_2$.

\begin{lem}\label{lemma:closed_form}
The form $\omega$ on $\bbF_2$ defined above is symplectic if and only if 
\begin{equation}\label{eq:Symplectic_Flag}
	A_1^2 \epsilon_1 + A_2^2 \epsilon_2 + A_3^2 \epsilon_3 =0.
\end{equation}
\end{lem}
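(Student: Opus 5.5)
The plan is to compute $d\omega$ directly on $\SU(3)$ using the Maurer--Cartan equations \eqref{eq:Maurer_Cartan_eqs}. This is legitimate because $\omega$ descends to $\bbF_2$ and $\proj^*$ is injective on forms, so $\omega$ is closed on $\bbF_2$ if and only if its pullback is closed.

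First I rewrite $\omega$ in real terms. Since $\alpha_i\wedge\overline\alpha_i = A_i^2(\eta_i+\ii\epsilon_i\theta_i)\wedge(\eta_i-\ii\epsilon_i\theta_i) = -2\ii A_i^2\epsilon_i\,\eta_i\wedge\theta_i$, we get
\begin{equation*}
\omega=\sum_{i=1}^3 A_i^2\epsilon_i\,\eta_i\wedge\theta_i .
\end{equation*}
Nondegeneracy on $\bbF_2$ is automatic: $A_i>0$ and $\epsilon_i=\pm1$, so $\omega$ restricted to each $\mm_i$ is nondegenerate, and the $\mm_i$ span $\mm$. Hence "symplectic" reduces to "closed", and it remains to compute $d(\eta_i\wedge\theta_i)$.

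For $i=1$, I expand
\begin{equation*}
d(\eta_1\wedge\theta_1)=d\eta_1\wedge\theta_1-\eta_1\wedge d\theta_1 .
\end{equation*}
The terms involving $\beta_2-\beta_3$ drop out, since they produce $\theta_1\wedge\theta_1$ or $\eta_1\wedge\eta_1$. Reordering the remaining cubic terms gives
\begin{equation*}
d(\eta_1\wedge\theta_1)=\Psi:=\eta_1\wedge\eta_2\wedge\eta_3-\theta_1\wedge\theta_2\wedge\eta_3-\theta_2\wedge\theta_3\wedge\eta_1-\theta_3\wedge\theta_1\wedge\eta_2 .
\end{equation*}
This $3$-form is invariant under cyclic permutations of $(1,2,3)$. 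The Maurer--Cartan system is itself cyclically symmetric, as the summary \eqref{eq:Maurer_Cartan_summary} shows. Therefore the same computation gives $d(\eta_i\wedge\theta_i)=\Psi$ for $i=2,3$ as well.

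Summing, $d\omega=(A_1^2\epsilon_1+A_2^2\epsilon_2+A_3^2\epsilon_3)\,\Psi$. Since $\Psi\neq0$ (its coefficient on $\eta_1\wedge\eta_2\wedge\eta_3$ is $1$), $\omega$ is closed exactly when \eqref{eq:Symplectic_Flag} holds.

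The main obstacle is sign bookkeeping. One has to get the signs right when reordering the wedge products, and one has to check that the cyclic symmetry genuinely transports the computation for $i=1$ to $i=2,3$ with the same sign. I would guard against errors by explicitly redoing the $i=2$ case from \eqref{eq:Maurer_Cartan_eqs}.
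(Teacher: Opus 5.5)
Your proof is correct and is essentially the paper's argument, written in real rather than complex notation. The paper uses $a_i=\theta_i+\ii\eta_i$ and the summarized Maurer--Cartan equation $da_i=-\ii(\beta_j-\beta_k)\wedge a_i+\overline{a_j\wedge a_k}$ to obtain $d\omega=(\epsilon_1A_1^2+\epsilon_2A_2^2+\epsilon_3A_3^2)\,\Im(\overline{a_1\wedge a_2\wedge a_3})$; your $\Psi$ is exactly $\Im(\overline{a_1\wedge a_2\wedge a_3})$, and the complex form makes the cyclic symmetry you invoke for $i=2,3$ automatic, while your explicit observation that nondegeneracy holds automatically (so that symplectic reduces to closed) fills in a point the paper leaves implicit.
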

\begin{proof}
For any 1-form $\alpha$, we have $d(\ii\alpha\wedge\overline\alpha)=2\Im(d\alpha\wedge \overline\alpha)$. 
Note that 
\begin{equation*}
	\alpha_i =\begin{cases}
			A_i a_i,&\text{ if }\epsilon_i=1,\\
			A_i \overline{a}_i,&\text{ if }\epsilon_i=-1,
	   		\end{cases}
\end{equation*}	
and thus	$d(\ii\alpha_i\wedge\overline\alpha_i) = 2\epsilon_iA_i^2\Im(da_i\wedge\overline{a_i})$. We thus have
\begin{equation*}
	d\omega = \Im\bigl(\epsilon_1A_1^2 da_1\wedge\overline{a_1}+\epsilon_2A_2^2 da_2\wedge\overline{a_2}+\epsilon_3A_3^2 da_3\wedge\overline{a_3}\bigr).
\end{equation*}

Using the Maurer--Cartan Equation \eqref{eq:Maurer_Cartan_summary}, we see that for cyclic permutations $(i,j,k)$ of $(1,2,3)$, we have
\[da_i\wedge \overline{a_i}=-\ii (\beta_j-\beta_k)a_i\wedge \overline{a_i}+\overline{a_1\wedge a_2\wedge a_3}.\] 
As $\ii (\beta_j-\beta_k)a_i\wedge \overline{a_i}$ is always real, we have
\begin{equation}
	d\omega = (\epsilon_1 A_1^2+\epsilon_2A_2^2+\epsilon_3A_3^2)\Im(\overline{a_1\wedge a_2\wedge a_3}).
\end{equation} 
Given that $\Im(\overline{a_1\wedge a_2\wedge a_3})=\eta_1\wedge\eta_2\wedge\eta_3-\eta_1\wedge\theta_2\wedge\theta_3-\theta_1\wedge\eta_2\wedge\theta_3-\theta_1\wedge\theta_2\wedge\eta_3$ is non-zero, we see that $\omega$ is closed if and only if Equation \eqref{eq:Symplectic_Flag} is satisfied.
\end{proof}

Any choice of $(\epsilon_1,\epsilon_2,\epsilon_3)$ and $(A_1,A_2,A_3)$ satisfying both Equations \eqref{eq:Integrable_Flag} and \eqref{eq:Symplectic_Flag} then yields a homogeneous K\"ahler structure on $\bbF_2$.  Thus $J^i$ is compatible with a real $2$-parameter family of K\"ahler structures determined by solving the above Equation \eqref{eq:Symplectic_Flag}.  On the other hand, $J^{ni}$ is compatible with a nearly K\"ahler structure.

\begin{exam}[K\"ahler--Einstein structure]\label{ex:Kahler-Einstein}
	The K\"ahler structure determined by $J^i$ and $\omega$ with $A_1^2=A^2=A_3^2$ and $A_2^2=2A^2$ is the standard homogeneous K\"ahler--Einstein structure on $\bbF_2$.
\end{exam}

\subsection{Cohomology of $\bbF_2$}
The cohomology of $\bbF_2$ is well known with $h^0=h^6=1$, $h^2=h^4=2$, and $h^1=h^3=h^5=0$; see for instance \cite[Proposition 21.17]{BottTu}.

It is useful at this time to introduce the notation $\sumcyclic$:  
\begin{equation}
	\sumcyclic f(i,j,k):=f(1,2,3)+f(2,3,1)+f(3,1,2).
\end{equation}

\begin{lem}\label{lem:cohomology}
	All cohomology classes of $\bbF_2$ can be represented by $\SU(3)$-invariant representatives:
	\begin{align}
		\Homology^2(\bbF_2,\Reals) &= \biggl\{[x_1\ii \alpha_1\wedge \overline{\alpha}_1+x_2\ii \alpha_2\wedge \overline{\alpha}_2+x_3\ii \alpha_3\wedge \overline{\alpha}_3] : \sum_{i=1}^3 x_i\epsilon_i A_i^2=0\biggr\},\label{eqn:H2}\\
		\Homology^4(\bbF_2,\Reals)& = \Biggl\{ \biggl[\sumcyclic x_i\alpha_j\wedge \overline{\alpha}_j\wedge \alpha_k\wedge \overline{\alpha}_k\biggr]\Biggr\}.
	\end{align}
In particular, $\Homology^{1,1}(\bbF_2,\Reals)=\Homology^2(\bbF_2,\Reals)$ in the presence of a K\"ahler structure.
\end{lem}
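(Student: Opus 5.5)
The proof rests on averaging over the compact group $\SU(3)$, followed by an explicit computation of the closed invariant forms on $\bbF_2$ using the Maurer--Cartan equations \eqref{eq:Maurer_Cartan_eqs}.

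\emph{Averaging.} Since $\SU(3)$ is compact and connected, each element $g$ acts on $\bbF_2$ by a diffeomorphism homotopic to the identity, so $g^*$ acts trivially on de Rham cohomology. Given a closed form $\sigma$, I will set
\[
\bar\sigma=\int_{\SU(3)} g^*\sigma\,dg
\]
with respect to Haar measure. This form is closed and invariant, and it is cohomologous to $\sigma$: each $g^*\sigma-\sigma$ is exact, and the primitives can be chosen to depend continuously on $g$ via the homotopy operator, so the average is also exact. Hence every class has an $\SU(3)$-invariant representative.

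\emph{Degree 2.} Invariant forms on $\SU(3)/\bbT^2$ correspond to $\Ad(\bbT^2)$-invariant elements of $\Lambda^\bullet\mm^*$. In degree 2, Schur's lemma applies to the pairwise inequivalent real root representations $\mm_i$, using the action computed in \eqref{eqn:LieT1theta1} and \eqref{eqn:LieT1eta1}. It shows that the invariant 2-forms are exactly the span of $\eta_i\wedge\theta_i$, $i=1,2,3$, that is, of the forms $\ii\alpha_i\wedge\overline\alpha_i$. By the computation in the proof of Lemma \ref{lemma:closed_form}, a combination $\sum x_i\ii\alpha_i\wedge\overline\alpha_i$ has differential
\[
\Bigl(\sum x_i\epsilon_iA_i^2\Bigr)\Im\bigl(\overline{a_1\wedge a_2\wedge a_3}\bigr),
\]
up to a factor of $2$. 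So the combination is closed if and only if $\sum x_i\epsilon_iA_i^2=0$, a 2-dimensional space. There are no invariant 1-forms: the only candidates are the $\eta_i,\theta_i$, and these are not $\bbT^2$-invariant. Consequently no nonzero invariant 2-form is exact, because if $\bar\sigma=d\tau$ then averaging $\tau$ gives an invariant primitive, which must vanish. The closed invariant 2-forms therefore inject into $\Homology^2$, and comparing with $h^2=2$ yields \eqref{eqn:H2}.

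\emph{Degree 4.} By the same Schur argument, the invariant 4-forms are spanned by the products $(\eta_j\wedge\theta_j)\wedge(\eta_k\wedge\theta_k)$. There is one further possibility to exclude: mixed invariants coming from $\Lambda^2$ of a sum of root spaces. Since $r_1+r_2+r_3=0$, cubic terms built from one covector in each $\mm_i$ such as $\Re(a_1\wedge a_2\wedge a_3)$ are invariant, but these are 3-forms; in degree 4 the weight bookkeeping leaves only the products above. Each product is closed because it is a wedge of 2-forms whose differentials are multiples of the same 3-form $\Im(\overline{a_1a_2a_3})$, which wedges to zero against any $\eta_i\wedge\theta_i$. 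Indeed, $\Im(\overline{a_1a_2a_3})\wedge\eta_i\wedge\theta_i$ has a repeated covector in each term. So the invariant 4-forms form a 3-dimensional space of closed forms. The exact ones among them are the differentials of invariant 3-forms, namely multiples of $\Im(\overline{a_1a_2a_3})$ and $\Re(\overline{a_1a_2a_3})$. One checks via the Maurer--Cartan equations that $d\Re(a_1a_2a_3)$ is a nonzero multiple of $\sum_{\circlearrowright}\ii\alpha_j\overline\alpha_j\ii\alpha_k\overline\alpha_k$ with suitable weights. Thus the quotient has dimension at most 2, and averaging together with $h^4=2$ gives equality. As the statement records the span of all cyclic sums, it is enough to note that they surject onto $\Homology^4$.

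\emph{Type.} Each $\ii\alpha_i\wedge\overline\alpha_i$ is of type $(1,1)$, which gives $\Homology^{1,1}=\Homology^2$. This also follows from $h^{2,0}=0$ by Hodge theory.

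The main obstacle is the precise bookkeeping of invariants in $\Lambda^3\mm^*$ and $\Lambda^4\mm^*$ and checking which of them are exact. I will handle this by weight counting, using the $\bbT^2$-weights $\pm r_i$ of $a_i,\overline a_i$ and the relation $r_1+r_2+r_3=0$.
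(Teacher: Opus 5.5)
Your proof is correct, but it takes a genuinely different route from the paper's. The paper neither averages nor classifies invariant forms. With $\gamma_i=a_i\wedge\overline{a_i}$, it writes down the closed invariant forms $\nu_0=\gamma_1+\gamma_2-2\gamma_3$, $\nu_i=\gamma_j-\gamma_k$ and $\sigma_i=\gamma_j\wedge\gamma_k$. It then proves that $[\nu_0],[\nu_3]$ are linearly independent, and likewise $[\sigma_1],[\sigma_2]$, by wedging into top degree: $\nu_0^3=-12\,\gamma_1\wedge\gamma_2\wedge\gamma_3$, $\nu_0\wedge\nu_3^2=4\,\gamma_1\wedge\gamma_2\wedge\gamma_3$, $\nu_3^3=0$, and $\sigma_1\wedge\nu_1=0$, $\sigma_2\wedge\nu_1=\gamma_1\wedge\gamma_2\wedge\gamma_3$. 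Finally it invokes $h^2=h^4=2$ to conclude that these classes span.

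That argument is short and purely computational. However, it only bounds the image of the invariant closed forms from below, so the Betti numbers are indispensable. The first assertion of the lemma, that every class has an invariant representative, then follows only indirectly from the dimension count.

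Your reduction to the invariant subcomplex gives surjectivity in every degree at once. It also makes $h^2=2$ superfluous: there are no invariant $1$-forms, so $\Homology^2$ is exactly the $2$-dimensional space of closed invariant $2$-forms. In degree $4$ your approach explains the shape of the statement. Every invariant $4$-form is closed, and the only redundancy among the three cyclic products is the exact form $d\Re(a_1\wedge a_2\wedge a_3)=-\sumcyclic\gamma_j\wedge\gamma_k$. Together with $d\Im(a_1\wedge a_2\wedge a_3)=0$, this recomputes $h^4=2$ as well.

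The cost is the averaging lemma and the weight bookkeeping, which you should write out rather than gesture at. In $\Lambda^4\mm^*$, the only summands besides the products are $\Lambda^2\mm_i^*\otimes\mm_j^*\otimes\mm_k^*$, with weights $\pm r_j\pm r_k\neq 0$. In $\Lambda^3\mm^*$, the only invariants come from $\mm_1^*\otimes\mm_2^*\otimes\mm_3^*$, where among the weights $\pm r_1\pm r_2\pm r_3$ only $\pm(r_1+r_2+r_3)$ vanish. Since the lemma as stated only asserts that invariant forms represent every class, your degree-$4$ exactness computation is not actually needed for it.
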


\begin{proof}Let $\gamma_i:=a_i\wedge \overline{a_i}$.  A linear combination $x_1\gamma_1+x_2\gamma_2+x_3\gamma_3$ is closed if $x_1+x_2+x_3=0$. 
We introduce closed forms $\nu_0:=\gamma_1+\gamma_2-2\gamma_3$ and $\nu_i:=\gamma_j-\gamma_k$. We have $\nu_0\wedge\nu_0\wedge\nu_0=-12\gamma_1\wedge\gamma_2\wedge\gamma_3$, and $\nu_0\wedge\nu_3\wedge_3 = 4\gamma_1\wedge\gamma_2\wedge\gamma_3$. Thus both cohomology classes $[\nu_0]$ and $[\nu_3]$ are non-zero. Since $\nu_i^3=0$, the cohomology classes $[\nu_0]$ and $[\nu_3]$ must be linearly independent. Since $h^2=2$, it must be that $\Homology^2(\bbF_2,\Reals)$ is spanned by those classes. Any linear combination is of the form $x_1\gamma_1+x_2\gamma_2+x_3\gamma_3$ with $x_1+x_2+x_3=0$. Equation \eqref{eqn:H2} follows from the fact that $\alpha_i \wedge \overline{\alpha}_i = \epsilon_iA_i^2\gamma_i$. 

Let $\sigma_i:=\gamma_j\wedge \gamma_k$ for any cyclic permutation $(i,j,k)$ of $(1,2,3)$. 
Given that the $\sigma_i$ are automatically closed, they represent classes in $\Homology^4(\bbF_2,\Reals)$. Since $\gamma_i\wedge \sigma_j = \delta_{ij}\gamma_1\wedge\gamma_2\wedge\gamma_3$, the classes $[\sigma_i]$ are all non-zero. Suppose that $0=x_1[\sigma_1]+x_2[\sigma_2]$. Then given that $[(x_1\sigma_1+x_2\sigma_2)\wedge \nu_1]=x_2[\gamma_1\wedge\gamma_2\wedge\gamma_3]$, we must have $x_2=0$, and thus $x_1=0$. So $[\sigma_1]$ and $[\sigma_2]$ are linearly independent and thus must span $\Homology^4(\bbF_2,\Reals)$. 
\end{proof}

\subsection{Subvarieties in $\bbF_2$}
{In this section we assume that $\bbF_2$ is endowed with its usual complex structure. From Proposition \ref{lemma:invariant_complx_str} (see also \cite{Borel-Hirzebruch-characteristic-classes-homogeneous-spaces-I}) we know that any two $SU(3)$-invariant complex structures on $\bbF_2$ are biholomorphic and a complex structure is determined by a Weyl chamber so that the usual complex structure we are considering is $J^i$ from the previous section up to the action of the Weyl group.}
\subsubsection{Curves}
Consider the following subgroups $H_i \subset \SU(3)$.
\begin{align*}
	H_1 &= \left\{ \begin{pmatrix}
		a & 0 & 0 \\
		0 & b & c \\
		0 & d & e
	\end{pmatrix} \in \SU(3) \ : \  \begin{pmatrix}
		b & c \\
		d & e
	\end{pmatrix} \in \U(2)  \right\} , \\
	H_2 &= \left\{ \begin{pmatrix}
		a & 0 & c \\
		0 & b & 0 \\
		d & 0 & e
	\end{pmatrix} \in \SU(3) \ : \  \begin{pmatrix}
		a & c \\
		d & e
	\end{pmatrix} \in \U(2)  \right\} , \\
	H_3 &= \left\{ \begin{pmatrix}
		a & c & 0 \\
		d & b & 0 \\
		0 & 0 & e
	\end{pmatrix} \in \SU(3) \ : \  \begin{pmatrix}
		a & c \\
		d & b
	\end{pmatrix} \in \U(2)  \right\} .
\end{align*}
For $i=1,2,3$, consider the curves $C_i$ obtained as the orbits of the subgroups $H_i \subset \SU(3)$ at the coset of the identity. 
{\begin{lem}\label{lemma:C_i}
The curves $C_i$, $i=1,2,3$, are complex projective lines in $\bbF_2$ for its usual integrable complex structure
\end{lem}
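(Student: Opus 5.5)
We identify each orbit $C_i=H_i\cdot[\id]$ with the projective line $H_i/(H_i\cap\bbT^2)\simeq \U(2)/\bbT^2\simeq \cp^1$. Then we check, in two complementary ways, that it is a complex submanifold for the usual complex structure $J^i$.

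The first step is the identification of the orbit. The isotropy of $[\id]$ in $H_i$ is $H_i\cap \bbT^2$, which is the diagonal torus of $H_i\simeq \U(2)$. So $C_i\simeq \U(2)/\bbT^2\simeq SU(2)/S^1\simeq S^2$ is an embedded $2$-sphere.

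Concretely, under the map ${\bf \Phi}$, take $M=(u\,v\,w)$ with $M\in H_3$. Then $u,v$ range over orthonormal bases of $\Span(e_1,e_2)$ and $w\in\Span(e_3)$. Hence $C_3=\{(\mathcal P,l): \mathcal P=e_3^\perp\text{ fixed},\ l\subset \mathcal P\}$, which is the fibre $\proj_1^{-1}(e_3^\perp)\simeq \mathbb{P}(\mathcal P)\simeq\cp^1$.

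For $H_1$, $u=e_1$ is fixed, so $\mathcal P=e_1^\perp$ is fixed while $v$ ranges over $e_1^\perp$. Thus $C_1=\proj_1^{-1}(e_1^\perp)$.

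For $H_2$, $v=e_2$ is fixed, so $l=\Span(e_2)$ is fixed while $\mathcal P=u^\perp$ with $u\in\Span(e_1,e_3)$. Thus $C_2=\proj_2^{-1}(\Span(e_2))$, the pencil of planes containing $l$.

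Since $\proj_1,\proj_2$ are holomorphic (as noted above), each $C_i$ is a fibre of a holomorphic submersion $\bbF_2\to\cp^2$. It is therefore a compact complex curve, biholomorphic to $\cp^1$ because it is a linear $\mathbb{P}^1$ of lines in a plane, or dually of planes through a line.

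As a consistency check within the homogeneous formalism, the tangent space of $C_i$ at $[\id]$ is $\mathfrak h_i\cap\mm$. For $H_1$ this is spanned by $E_{23},F_{23}$, which is the root space $\mm_1$; similarly $T C_2=\mm_2$ and $T C_3=\mm_3$. Each $\mm_i$ is preserved by any invariant $J$, since by Proposition \ref{lemma:invariant_complx_str} $J\eta_i=-\epsilon_i\theta_i$. So the orbits are $J$-complex at the base point, and by $H_i$-equivariance at every point. This shows the $C_i$ are $J^i$-holomorphic curves regardless of the Weyl-chamber choice.

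The main subtlety is the identification of $J^i$, with signs $(1,-1,1)$, with the complex structure making $\proj_1,\proj_2$ holomorphic. The homogeneous argument sidesteps it, because every $\mm_i$ is $J$-invariant for any of the admissible sign choices. The orbit description is then only needed to conclude that $C_i\simeq S^2$ is a smooth embedded compact complex curve of genus $0$, hence biholomorphic to $\cp^1$ by uniformization.
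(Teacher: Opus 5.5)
Your explicit identification of $C_3$ is wrong. Under ${\bf \Phi}$ the plane of the flag is $u^\perp=\Span(v,w)$, not $\Span(u,v)=w^\perp$. For $M=(u\,v\,w)\in H_3$ you correctly get $u,v\in\Span(e_1,e_2)$ and $w\in\Span(e_3)$. Hence ${\bf \Phi}(M)=(\Span(v,e_3),\Span(v))$, and as $[v]$ runs over $\mathbb{P}(\Span(e_1,e_2))$ both the plane and the line move. So $C_3$ is a fibre of neither $\proj_1$ nor $\proj_2$: $\proj_1$ maps it bijectively onto the pencil of planes containing $e_3$, and $\proj_2$ maps it onto the line $\mathbb{P}(\Span(e_1,e_2))$. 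This is exactly the set the paper writes as $\{(u^\perp,\Span(v)) : u,v\in(0,0,1)^\perp\}$.

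Your descriptions $C_1=\proj_1^{-1}(e_1^\perp)$ and $C_2=\proj_2^{-1}(\Span(e_2))$ are correct. However, the sentence ``each $C_i$ is a fibre of a holomorphic submersion'' fails for $i=3$. To keep the explicit route for $C_3$, replace it by the observation that $[v_1:v_2]\mapsto\bigl(\Span((v_1,v_2,0),e_3),[v_1:v_2:0]\bigr)$ is a holomorphic embedding $\cp^1\to\bbF_2$ with image $C_3$.

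Because of this, your homogeneous argument is not a consistency check but the actual proof for $C_3$. It is complete without the orbit description, since the diffeomorphism $C_i\simeq H_i/\bbT^2\simeq S^2$ from your first step suffices. The argument runs as follows:
\begin{itemize}
\item The usual complex structure is $\SU(3)$-invariant, because ${\bf \Phi}$ is $\SU(3)$-equivariant and $\SU(3)$ acts holomorphically on flags.
\item By the Schur argument of Proposition~\ref{lemma:invariant_complx_str}, this structure preserves each $\mm_i$.
\item Since $T_{[\id]}C_i=\mm_i$, $H_i$-equivariance makes $C_i$ a compact complex curve of genus $0$, hence biholomorphic to $\cp^1$.
\end{itemize}

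The paper instead treats only $i=3$, through an explicit algebraic description with coordinate $[u_1:u_2]$, and declares the other cases analogous. Your argument is uniform in $i$ and does not depend on which Weyl chamber the usual complex structure corresponds to. What it gives up is an explicit parametrisation, which it replaces by the uniqueness of the complex structure on $S^2$.
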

\begin{proof}
The $C_i$ correspond to Schubert varieties in $\bbF_2$ and are projective lines. Without loss of generality we focus on the $i=3$ case to verify these claims. We have
\[{\bf \Phi}(H_3\cdot I)=\{(u^\perp,\Span(v)) : u,v\in (0,0,1)^\perp\}.\]
For any $u\in (0,0,1)^\perp$, we write $u=(u_1,u_2,0)\in \C^3$.
The set $\{(u^\perp,\Span(v)) : u,v\in (0,0,1)^\perp\}$ is algebraic in $\bbF_2$ and biholomorphic to $\cp^1$ via
\[\begin{aligned}
 \{(u^\perp,\Span(v)) : u,v\in (0,0,1)^\perp\}&\rightarrow &\cp^1\\
 (u^\perp,\Span(v))&\mapsto& [u_1:u_2].
 \end{aligned}\]
The proof is now complete.
\end{proof}}

\begin{lem}[Poincar\'e duals of curves]\label{lem:PD curves}	Let $(i,j,k)$ be a cyclic permutation of $(1,2,3)$. Let $\mu_i:=\frac{\Vol(C_i)}{\Vol(\bbF_2)}$.  The degree $4$ cohomology class $PD[C_i]$ Poincar\'e dual to the curve $C_i$ is represented by the $4$-form
	\[\alpha_{C_i}=\mu_i (\frac{\ii}{2} \alpha_j \wedge \overline{\alpha_j}) \wedge (\frac{\ii}{2} \alpha_k \wedge \overline{\alpha_k}) . \]
	 Moreover, there is $c>0$, independent of $i$, such that $\mu_i=\frac{c}{A_j^2A_k^2}$, thus
	\[\alpha_{C_i}= \frac{c}{A_j^2 A_k^2} \ (\frac{\ii}{2} \alpha_j \wedge \overline{\alpha_j} ) \wedge (\frac{\ii}{2} \alpha_k \wedge \overline{\alpha_k}) .\]
\end{lem}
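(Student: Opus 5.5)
We will use the characterization that a closed $4$-form $\alpha$ represents $PD[C_i]$ if and only if $\int_{\bbF_2}\alpha\wedge\beta=\int_{C_i}\beta$ for every closed $2$-form $\beta$. By Lemma \ref{lem:cohomology}, it is enough to test this on $\SU(3)$-invariant representatives $\beta=\sum_l x_l\,\tfrac{\ii}{2}\alpha_l\wedge\overline{\alpha}_l$ with $\sum_l x_l\epsilon_lA_l^2=0$.

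First we note that the proposed form $\alpha_{C_i}$ is closed. It is a constant multiple of $\sigma_i=\gamma_j\wedge\gamma_k$, and $\sigma_i$ is closed, as used in the proof of Lemma \ref{lem:cohomology}.

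Next we compute both sides of the pairing. For the left-hand side, $\alpha_{C_i}\wedge\tfrac{\ii}{2}\alpha_l\wedge\overline{\alpha}_l$ vanishes unless $l=i$, and for $l=i$ it equals $\mu_i\,\omega^3/3!$. Hence
\[
\int_{\bbF_2}\alpha_{C_i}\wedge\beta=\mu_i x_i\Vol(\bbF_2)=x_i\Vol(C_i).
\]
For the right-hand side, $C_i$ is the $H_i$-orbit of the identity coset. Its tangent space at the identity is the root space $\mm_i$, because $H_i$ is generated by $\ttt^2$ together with $E$ and $F$ in the block of root $r_i$. By invariance, the restriction to $C_i$ of each $\alpha_j\wedge\overline{\alpha}_j$ with $j\neq i$ therefore vanishes, and $\omega|_{C_i}=\tfrac{\ii}{2}\alpha_i\wedge\overline{\alpha}_i|_{C_i}$. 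So $\int_{C_i}\beta=x_i\int_{C_i}\omega=x_i\Vol(C_i)$, using that $C_i$ is complex (Lemma \ref{lemma:C_i}). The two sides agree, which gives the first claim. There is one subtlety: the correct identification of which $\eta,\theta$ pair restricts nontrivially to $C_i$. For $H_3$, the Lie algebra block in positions $(1,2)$ corresponds to $\eta_3,\theta_3$ by the basis duality after \eqref{eq:DEFmatrices}, so the labeling is consistent.

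For the second claim we compute $\mu_i$. The orbit $C_i\cong H_i/\bbT^2\cong\SU(2)/\U(1)$ is a round $2$-sphere. On it, $\omega$ restricts to $\tfrac{\ii}{2}A_i^2\,a_i\wedge\overline{a}_i$ up to the sign $\epsilon_i$, so $\Vol(C_i)=A_i^2 v$, where $v=\int_{C_i}\tfrac{\ii}{2}a_i\wedge\overline{a}_i$ (with orientation) is the same for every $i$. This uniformity follows because the Weyl element $\sigma$ cyclically permutes the $H_i$ and the $a_i$, up to conjugation. Similarly, $\omega^3/3!=(\tfrac{\ii}{2})^3\prod_l\alpha_l\wedge\overline{\alpha}_l$ is $A_1^2A_2^2A_3^2$ times a fixed invariant volume form, so $\Vol(\bbF_2)=A_1^2A_2^2A_3^2\,V$ with $V$ independent of the $A$'s. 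Then $\mu_i=v/(V A_j^2A_k^2)$, so we may take $c=v/V$.

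The main obstacle is the sign and orientation bookkeeping. Since the $\epsilon$'s are mixed, namely $(1,-1,1)$, the integral $v$ must be taken with the complex orientation in each case. We handle this by always writing things in terms of $\alpha_i$, which is of type $(1,0)$, rather than $a_i$: then $\tfrac{\ii}{2}\alpha_i\wedge\overline{\alpha}_i/A_i^2$ is the positive area form of the unit-scale sphere for every $i$, and Weyl symmetry shows its total area is the same for each $i$.
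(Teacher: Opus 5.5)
Your proposal is correct and follows essentially the same route as the paper. You pair the invariant candidate against invariant closed $2$-forms, use that $TC_i$ is the left-translate of $\mm_i$ together with the calibration identity $\int_{C_i}\omega=\Vol(C_i)$, and then use the Weyl group to show that $\Vol(C_i)/A_i^2$ is independent of $i$ while $\Vol(\bbF_2)$ scales like $A_1^2A_2^2A_3^2$. The differences are cosmetic: you verify the given coefficients instead of solving for them, and you fix the orientation sign by positivity of $\omega|_{C_i}$ where the paper takes absolute values.
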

\begin{proof}
	The tangent space to these $H_i$-orbits corresponds to the (left-translation of the) root spaces $\mm_i$ and therefore, we find that for any $i,j \in \lbrace 1,2,3 \rbrace$,
	\[(\tfrac{\ii}{2} \alpha_j \wedge \overline{\alpha_j})|_{\mm_i} = \delta_{ij} \alpha_j \wedge \overline{\alpha_j}  = \delta_{ij} \epsilon_i A_i^2 \theta_i \wedge \eta_i.\]
	Furthermore, because holomorphic curves are calibrated by the K\"ahler form $\omega$, we must have $\omega|_{\mm_i} = \vol_{C_i}$ from which we infer that
	\[\Vol(C_i) = \int_{C_i} \omega = \epsilon_i A_i^2 \int_{C_i}\theta_i \wedge \eta_i,\]
	and therefore
	\[\int_{C_i}\theta_i \wedge \eta_i = \frac{\Vol(C_i)}{\epsilon_i A_i^2}. \]
	Let $\gamma = \frac{\ii}{2} \sum_{i=1}^3 \left(\gamma_i \alpha_i \wedge \overline{\alpha_i}\right)$ be an invariant closed $2$-form on $\bbF_2$. Note that  $\gamma$ is closed if and only if $\sum_{i=1}^3 \epsilon_i A_i^2 \gamma_i =0$. 
	For all $i=1,2,3$
	\[\int_{C_i} \gamma =  \epsilon_i A_i^2 \gamma_i \int_{C_i} \eta_i \wedge \theta_i  = \gamma_i \Vol(C_i) .\]
 Therefore, the closed invariant $4$-form 
	\[\alpha_{C_l}= \sum_{i=1}^3 c_{i} (\frac{\ii}{2} \alpha_j \wedge \overline{\alpha_j}) \wedge (\frac{\ii}{2} \alpha_k \wedge \overline{\alpha_k})  , \]
	represents the Poincar\'e dual to $C_l$ if and only if
	\begin{align*}
		\gamma_l \Vol(C_l)  & = \int_{\bbF_2} \alpha_{C_l} \wedge \gamma \\
		& = \int_{\bbF_2} \bigl(\sum_{i=1}^3 c_{i} \gamma_i\bigr)  (\frac{\ii}{2} \alpha_1 \wedge \overline{\alpha_1}) \wedge (\frac{\ii}{2} \alpha_2 \wedge \overline{\alpha_2}) \wedge (\frac{\ii}{2} \alpha_3 \wedge \overline{\alpha_3}) \\
		& =  \Vol(\bbF_2)\sum_{i=1}^3 c_i \gamma_i  ,
	\end{align*}
	for all $(\gamma_1,\gamma_2,\gamma_3)$ such that $\sum_{i=1}^3 \epsilon_i A_i^2 \gamma_i = 0$. Summarising, $\alpha_{C_l}$ is a representative of $PD[C_l]$ if and only if 
	\[\sum_{i=1}^3 c_i \gamma_i =  \gamma_l \frac{\Vol(C_l)}{\Vol(\bbF_2)}.\]
	A solution to this equation consists is obtained by setting
	\[c_l = \delta_{il}\frac{\Vol(C_l)}{\Vol(\bbF_2)} .\]
	Furthermore, one can check that the root spaces $\mm_1$, $\mm_2$, $\mm_3$ are interchanged by the action of the Weyl group and this action preserves the forms $\theta_i \wedge \eta_i$ up to sign. Thus $I=|\int_{C_i} \theta_i \wedge \eta_i|$ is independent of $i=1,2,3$, and therefore
	\[\Vol(C_i) = A_i^2 I ,\]
	for all $i=1,2,3$. Since $\Vol(\bbF_2) \sim A_1^2 A_2^2 A_3^2$, we find that
	\[\mu_i =  \frac{c}{A_j^2 A_k^2},\]
	for a positive constant $c>0$.
\end{proof}

\subsubsection{Surfaces}\label{ss:U(1) bundles}
We shall construct surfaces in $\bbF_2$ as zero-sets of holomorphic section over holomorphic line bundles so we start with a discussion of such bundles.

Circle bundles are topologically classified by the first Chern class of the line bundle associated with the standard representation. The first Chern class is an element of $\Homology^2(\bbF_2, \bbZ)$, a cohomology group which can be computed using the Serre spectral sequence for the fibration $\bbT^2 \rightarrow \SU(3) \rightarrow \bbF_2$.
Let $\Lambda:=\ker(\exp\colon \ttt^2 \rightarrow \bbT^2)$ be the cocharacter lattice. The cohomology $\Homology^1(\bbT^2 , \bbZ)$ is isomorphic to the weight lattice $\Hom (\Lambda , \bbZ)$. Together, these two ideas yield
\begin{equation}\label{eq:Serre}
	\Homology^2(\bbF_2, \bbZ) \cong \Homology^1(\bbT^2 , \bbZ)=\Hom (\Lambda , \bbZ).
\end{equation}

An integral weight $\beta \in \Hom (\Lambda , \bbZ) \subset \Hom(\ttt^2 , \Reals)$ can be viewed as a $1$-form in $\ttt^2$ and can be extended to $\su(3)$ as $\beta \oplus 0$ using the orthogonal splitting induced by the Killing form. Finally, it can be extended to a left-invariant $1$-form in $\SU(3)$, which we still denote $\beta$. 

Let $\beta=(n_1,n_2)\in \bbZ^2$ and consider the group homomorphism 
\begin{align*}
\lambda_\beta\colon \bbT^2&\rightarrow U(1)=S^1\\
(\theta_1,\theta_2)&\mapsto n_1 \theta_1 + n_2 \theta_2.
\end{align*}
Now consider the principal $U(1)$-bundle associated to the principal $\bbT^2$-bundle $\proj\colon SU(3)\rightarrow \bbF_2$ and $\lambda_\beta$ that is
\[SU(3)\times_{\lambda_\beta} U(1) \rightarrow \bbF_2.\]
Its total space is $SU(3)\times U(1)$ modulo the equivalence relation $\sim$ given by
\[
(M,e^{\ii\theta})\sim 
\left(M
\begin{pmatrix}
e^{\ii \theta_1}&0&0\\
0&e^{\ii \theta_2}&0\\
0&0&e^{-\ii (\theta_1+\theta_2)}
\end{pmatrix},
e^{-\ii\left(n_1 \theta_1 + n_2 \theta_2\right)}e^{\ii\theta}\right).
\]
The imaginary valued $1$-form $\ii \beta$ in $\SU(3)$ can be viewed as a connection on $SU(3)\times_{\lambda_\beta} U(1)$. That it is so follows from a theorem of Wang (see \cite{Wang-invariant-connections} or Theorem \ref{thm:Wang}) but, in fact, it is quite easy to see it directly in this special case as $U(1)$ is abelian and $\ii \beta$ restricts to the image on the fibres of the Maurer--Cartan form on $U(1)$ which is $\ii d\theta$. Let $L_\beta$ denote the corresponding line bundle
\[L_{\beta}=\SU(3) \times_{ e^{i\beta} } \bbC.\]
Using Chern--Weil theory, the first Chern class of $L_\beta$ is the cohomology of its curvature form, namely
\[c_1(L_{\beta}) = \frac{\ii}{2 \pi} [d ( \ii \beta )] = - \frac{1}{2 \pi} [d \beta ] \in  \Homology^2(\bbF_2, \bbZ).\]
Under the isomorphism of Equation \ref{eq:Serre}, $c_1(L_{\beta})$ corresponds to $ \beta \in \Hom (\Lambda , \bbZ) $.

For $m=(m_1,m_3) \in \bbZ^2$, let 
\begin{equation}
	m_2:=-(m_1+m_3),
\end{equation}
and
\begin{equation}\label{eq:betam}
	\beta_m:=m_1 \beta_3 -  m_3 \beta_1.
\end{equation}
The choice of labels in this equation may appear strange, but it is made in order to make some formulas ahead more symmetric.  The curvature of the connection $\ii d\beta_m$ is obtained explicitly using the Maurer--Cartan equation:
\begin{equation}\label{eq:curvature U(1)}
	F_{m}  =-\frac{1}{2}\left(\frac{2m_1}{\epsilon_1 A_1^2}  \alpha_1 \wedge \overline{\alpha_1} + \frac{2m_2}{\epsilon_2 A_2^2}  \alpha_2 \wedge \overline{\alpha_2} + \frac{2m_3}{\epsilon_3 A_3^2}  \alpha_3 \wedge \overline{\alpha_3} \right).
\end{equation}
In particular, it can be written as the $(1,1)$-form $\ii \kappa_m$ with
\begin{align}
	\kappa_m &:= \sum_{i=1}^3 \lambda_i \frac{\ii}{2} \alpha_i \wedge \overline{\alpha_i},\\
	\lambda_i &= - \frac{2m_i}{\epsilon_i A_i^2}, \ \text{for $i=1,2,3$}.
\end{align} 
We see that $L_\beta$ is a holomorphic bundle.

As $m$ varies in $\Reals^2$ (rather than in $\bbZ^2$), $[\kappa_m]$ spans $\Homology^2(\bbF_2,\Reals)$ as per Lemma \ref{lem:cohomology}. We use the map $m=(m_1,m_3)\mapsto [\kappa_m]$ to give coordinates on $\Homology^2(\bbF_2,\Reals)$.

\begin{remark}
As it is explained in \cite{ABBS}, every line bundle over $\bbF_2$ is isomorphic to one of the bundles $\mO(n_1,n_2):=\proj_1^*\mO(n_1)\otimes \proj_2^*\mO(n_2)$ where $\mO(1)$  denotes the tautological bundle over $\cp^2$. In our case it is not hard to see that
 \[L_{\beta_m} = \mathcal{O} (m_3,-(m_1+m_3)).\]
In fact, having in mind the previously considered groups $H_i$ for $i=1,3$, and noticing that $H_1\simeq S (\U(1) \times \U(2))$ and $H_3\simeq S (\U(2) \times \U(1))$, one can consider the following group homomorphisms $\mu_{i} \colon H_i \to \U(1)$:
\begin{align*}
	\mu_1 (e^{ix}, A) & = e^{ix} \\
	\mu_3 (A,e^{ix}) & = e^{ix} .
\end{align*}
Then, composing these with the inclusions $\iota_i \colon \bbT^2 \to H_i$ and multiplying them we obtain the group homomorphism
\[\lambda_m = (\mu_1^{-m_3} \circ\iota_1 ) \otimes (  \mu_3^{m_1} \circ \iota_3).\]
Having this in consideration, we find that
\[L_{\beta_m} = \mO (m_3,-(m_1+m_3)) := \proj_1^* \mO (m_3) \otimes \proj_2^*\mO(-(m_1+m_3)) .\]
\end{remark}

\begin{lem}\label{lemma:positive_bundle}
The Chern class $c_1(L_{\beta_m})$ is semi-positive iff both $m_1$ and $m_3$ are non-negative and it is positive if it is semi-positive and at least one the $m_i$ is strictly positive. 
\end{lem}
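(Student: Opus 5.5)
We work with the usual integrable structure $J^i$, $(\epsilon_1,\epsilon_2,\epsilon_3)=(1,-1,1)$, with the $A_i$ satisfying Equation \eqref{eq:Symplectic_Flag}, so $A_2^2=A_1^2+A_3^2$. From the curvature computation \eqref{eq:curvature U(1)}, $c_1(L_{\beta_m})$ is represented by an invariant form proportional to $\kappa_m=\sum_i\lambda_i\frac{\ii}{2}\alpha_i\wedge\overline{\alpha_i}$, with $\lambda_i=-2m_i/(\epsilon_iA_i^2)$; the overall constant factor is $\frac{1}{2\pi}$ up to sign, and the sign is fixed by the convention $c_1=-\frac{1}{2\pi}[d\beta]$. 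Explicitly, $\lambda_1=-2m_1/A_1^2$, $\lambda_2=2m_2/A_2^2=-2(m_1+m_3)/A_2^2$ and $\lambda_3=-2m_3/A_3^2$. Thus all three eigenvalues of the representative relative to $\omega$ have the same sign structure governed by $m_1,m_3\ge0$. This gives the sufficiency direction: if $m_1,m_3\ge 0$, the invariant representative (with the correct overall sign) is a semi-positive $(1,1)$-form. It is strictly positive when $m_1,m_3>0$. When only one of them is positive, the representative is merely semi-positive, so we will have to argue positivity of the class differently.

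For necessity and for the precise positivity statement, we pair the class against the holomorphic curves $C_i$ of Lemma \ref{lemma:C_i}. Using the computation in the proof of Lemma \ref{lem:PD curves}, namely $\int_{C_i}\gamma=\gamma_i\Vol(C_i)$ for invariant closed $\gamma$, we get $\int_{C_i}c_1(L_{\beta_m})$ proportional to $\lambda_i A_i^2$, which in turn is proportional to $m_1$, $-m_2=m_1+m_3$ and $m_3$ respectively, all with the same positive constant. If $c_1$ is semi-positive, the integrals over the holomorphic curves $C_1$ and $C_3$ are nonnegative, so $m_1,m_3\ge0$. If $c_1$ is positive (Kähler), these integrals are strictly positive, which forces $m_1,m_3>0$.

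This exposes a tension with the statement as written: pairing with $C_1$ and $C_3$ seems to require both $m_i>0$ for positivity, not just one. I would therefore read "positive" in the lemma according to the authors' conventions. Either it means non-zero semi-positive, or it should be checked against $\mO(m_3,-(m_1+m_3))$. In that identification, $m_1=0$ gives $\proj_1^*\mO(m_3)$ twisted, the pullback of an ample class from $\cp^2$. Such a class is semi-positive and nonzero but not Kähler, so "positive" must mean the former.

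The main obstacle is this sign and convention bookkeeping. We need to match the overall sign of $c_1$ with $\kappa_m$ and interpret "positive" consistently. We will resolve it by computing $\int_{C_1}$ and $\int_{C_3}$ explicitly and comparing with the known degrees of $\mO(a,b)$ on the two fibre classes. The remaining steps are routine.
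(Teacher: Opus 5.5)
Your semi-positivity argument is correct, and your suspicion about the positivity clause is justified.

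\textbf{Semi-positivity.} The sufficiency half is exactly the paper's proof: read off the coefficients of the invariant representative. For necessity you take a different and more complete route. The paper only observes that the invariant form is semi-positive iff $\epsilon_i m_i\ge 0$. That tacitly uses the fact that a semi-positive class has a semi-positive invariant representative. This is true, by averaging over $\SU(3)$ (which acts holomorphically) together with uniqueness of invariant representatives, but the paper never says it. Your pairing with the holomorphic curves $C_1,C_3$ needs no such step, applies to an arbitrary representative, and also gives the obstruction to positivity.

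You need not defer the sign or compare with degrees of $\mO(a,b)$. From \eqref{eq:Maurer_Cartan_eqs} one gets $d\beta_m=2\sum_i m_i\,\theta_i\wedge\eta_i$, and $\frac{\ii}{2}\alpha_i\wedge\overline{\alpha_i}=-\epsilon_iA_i^2\,\theta_i\wedge\eta_i$. Hence
\[
c_1(L_{\beta_m})=\Bigl[-\frac{1}{2\pi}d\beta_m\Bigr]=\Bigl[\frac{1}{\pi}\sum_{i=1}^3\frac{m_i}{\epsilon_iA_i^2}\,\frac{\ii}{2}\alpha_i\wedge\overline{\alpha_i}\Bigr],
\]
with coefficients $m_1/(\pi A_1^2)$, $(m_1+m_3)/(\pi A_2^2)$ and $m_3/(\pi A_3^2)$. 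This is the form displayed in the paper's proof. Equation \eqref{eq:curvature U(1)} carries the opposite overall sign, which is probably what made you hesitate. The formula gives $\int_{C_1}c_1=m_1I/\pi$ and $\int_{C_3}c_1=m_3I/\pi$, with $I>0$ as in Lemma \ref{lem:PD curves}.

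\textbf{Positivity.} Your diagnosis is right, and this is exactly where the paper's own proof breaks. It asserts that the form ``is then positive if in addition at least one of the $m_i$ is positive''. But for $m_1=0<m_3$ the coefficient of $\alpha_1\wedge\overline{\alpha_1}$ vanishes and $\int_{C_1}c_1=0$, so no representative is K\"ahler; the case $m_3=0$ fails likewise via $C_3$.

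Do not, however, rescue the clause by reading ``positive'' as ``nonzero semi-positive''. The authors mean ample: immediately after the lemma they call $L_{\beta_{(0,d)}}$ and $L_{\beta_{(d,0)}}$ ample and invoke the Kodaira embedding theorem. The correct conclusion is that the clause is false as intended. The true statement, which your two arguments prove (invariant representative for ``if'', pairing with $C_1,C_3$ for ``only if''), is that $c_1(L_{\beta_m})$ is positive iff $m_1>0$ and $m_3>0$.

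The paper's later use survives. The $(d,0)$ and $(0,d)$ classes are pullbacks of $\mO(d)$ under $\proj_1$ or $\proj_2$, hence globally generated, so their sections still cut out hypersurfaces.

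Finally, drop the appeal to $\mO(m_3,-(m_1+m_3))$. For $m_1=0$ it gives $\mO(m_3,-m_3)$, which is not a pullback and has fibre degrees of opposite signs. As written, that identification is incompatible with the semi-positive cone you just computed, and $\int_{C_1}c_1=0$ is the right evidence.
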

	\begin{proof}As we have said before $c_1(L_{\beta_m})$ is represented by the form $\frac{\ii F_m}{2\pi} =-\frac{d \beta_m}{2 \pi}$ which is given by
\[ \frac{\ii }{4\pi} \left( \frac{2m_1}{\epsilon_1 A_1^2}  \alpha_1 \wedge \overline{\alpha_1} + \frac{2m_2}{\epsilon_2 A_2^2} \alpha_2 \wedge \overline{\alpha_2} + \frac{2m_3}{\epsilon_3 A_3^2} \alpha_3 \wedge \overline{\alpha_3} \right).\]
	This form is a semi-positive form if and only if $\epsilon_i m_i\geq 0$ for $i=1,2,3$. We have $(\epsilon_1,\epsilon_2,\epsilon_3)=(1,-1,1)$. Our condition amounts to having both $m_1$ and $m_3$ non-negative. The form is then be positive if in addition at least one of the $m_i$ is positive.
\end{proof}

Because the pairs $(0,d)$ and $(d,0)$ satisfy the conditions in Lemma \ref{lemma:positive_bundle}, the Chern forms of  $L_{\beta_{(0,d)}}$, $L_{\beta_{(d,0)}}$ are positive and the bundles $L_{\beta_{(0,d)}}$, $L_{\beta_{(d,0)}}$ are ample. By the Kodaira embedding theorem such bundles admit holomorphic sections whose zero set define homology classes which are Poincaré dual to the Chern classes of the bundles. Hence we find that the classes induced by the forms
\begin{align*}
	\frac{\ii}{2\pi}F_{(d,0)} = -\frac{d}{2\pi} d\beta_3 = \frac{\ii d}{4\pi} \left( \frac{2}{A_1^2}  \alpha_1 \wedge \overline{\alpha_1} + \frac{2}{ A_2^2}\alpha_2 \wedge \overline{\alpha_2} \right), \\
	\frac{\ii}{2\pi}F_{(0,d)} = \frac{d}{2\pi} d\beta_1 =  \frac{\ii d}{4\pi} \left( \frac{2}{A_2^2}  \alpha_2 \wedge \overline{\alpha_2} + \frac{2}{A_3^2}  \alpha_3 \wedge \overline{\alpha_3} \right) ,
\end{align*}
are represented by subvarieties. Indeed, we find that these classes are Poincar\'e dual to the pullbacks of degree-$d$ curves in either $\cp^2$ or $\text{Gr}(2,3)$. 
Another class whose Poincar\'e dual can be represented by an irreducible hypersurface is that of $\frac{\ii}{2\pi}F_{(1,1)}$ which is given by
\[\frac{\ii}{2\pi}F_{(1,1)} =  \frac{\ii}{4\pi} \left( \frac{2}{A_1^2} \alpha_1 \wedge \overline{\alpha_1} + \frac{4}{A_2^2}  \alpha_2 \wedge \overline{\alpha_2} + \frac{2}{A_3^2}  \alpha_3 \wedge \overline{\alpha_3} \right).\]

\begin{lem}[Poincar\'e duals of hypersurfaces]\label{lem:PD hypersurface}
	If the $2$-form
	\[\alpha_\surface = \frac{\ii}{2}\sum_{i=1}^3 \mu_i  \alpha_i \wedge \overline{\alpha_i},\]
	represents the Poincar\'e dual of a (complex) hypersurface, then $\sum_{i=1}^3 \epsilon_i A_i^2 \mu_i=0$ and both $\mu_1$, $\mu_3$ must be non-negative (and so must $\mu_2$ because $\epsilon_1=1=\epsilon_3$ and $\epsilon_2=-1$).
\end{lem}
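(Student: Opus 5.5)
The closedness condition comes first and needs no geometry. Any representative of a degree-2 class must be closed. By the computation in the proof of Lemma \ref{lemma:closed_form} (equivalently Lemma \ref{lem:cohomology}), the invariant form $\frac{\ii}{2}\sum_i \mu_i \alpha_i\wedge\overline{\alpha_i}$ is closed if and only if $\sum_i \epsilon_i A_i^2\mu_i=0$. Concretely, we write $\alpha_i\wedge\overline{\alpha_i}=\epsilon_iA_i^2\gamma_i$, so that $d$ of the form equals a multiple of $(\sum_i\mu_i\epsilon_iA_i^2)\Im(\overline{a_1\wedge a_2\wedge a_3})$, which is a nonzero 3-form.

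For the sign conditions, the plan is to pair the Poincar\'e dual of the hypersurface $S$ with the holomorphic curves $C_i$ of Lemma \ref{lemma:C_i}. From the proof of Lemma \ref{lem:PD curves} we already know that
\[
\int_{C_i}\alpha_S=\mu_i\Vol(C_i).
\]
This holds because only the $i$-th term restricts nontrivially to the tangent spaces of $C_i$, and $\frac{\ii}{2}\alpha_i\wedge\overline{\alpha_i}$ restricts to the volume form there. On the other hand, $\int_{C_i}\alpha_S=[S]\cdot[C_i]$ is the intersection number of the hypersurface with the curve. So it suffices to show $[S]\cdot [C_i]\ge 0$ for $i=1,3$ (and in fact for $i=2$ too).

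The key observation is that each $C_i$ moves in a family covering $\bbF_2$. For $g\in\SU(3)$, the translate $g\cdot C_i$ is again a holomorphic curve, because $\SU(3)$ acts by biholomorphisms for the invariant complex structure, and it is homologous to $C_i$ since $\SU(3)$ is connected. Since the translates sweep out all of $\bbF_2$ and $S$ is a proper subvariety, we can choose $g$ so that $g\cdot C_i\not\subset S$; for instance, pick $g$ so that $g\cdot C_i$ passes through a point outside $S$. Then $g\cdot C_i\cap S$ is a finite set (or empty). Because both are complex analytic, every local intersection multiplicity is positive, so $[S]\cdot[C_i]=\#(g\cdot C_i\cap S)$ counted with multiplicity, which is $\ge 0$. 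Dividing by $\Vol(C_i)>0$ gives $\mu_i\ge0$ for $i=1,2,3$. The remark about $\mu_2$ then also follows from closedness, since $\mu_2=(A_1^2\mu_1+A_3^2\mu_3)/A_2^2$ when $\epsilon=(1,-1,1)$.

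The main point requiring care is the positivity of intersection between a complex curve and a complex hypersurface meeting properly. This is the standard fact that complex analytic intersections carry positive local multiplicities, compatible with the cup product pairing. If $S$ is reducible or nonreduced, we apply this componentwise with positive multiplicities. I do not expect further obstacles.
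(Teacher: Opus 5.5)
Your argument is correct, and it detects the signs differently from the paper; the closedness part is the same.

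\textbf{The paper's route.} For the signs, the paper pairs $[S]$ with $[\omega]^2$ rather than with curves. By Wirtinger, $0<\Vol(S)=\int_{\bbF_2}\alpha_S\wedge\omega^2/2=\bigl(\sum_i\mu_i\bigr)\Vol(\bbF_2)$ for every homogeneous K\"ahler form. It then varies the K\"ahler structure to force $\mu_1,\mu_3\ge0$, and obtains $\mu_2\ge0$ from closedness. This uses only volume positivity of a complex hypersurface. However, it must be run with the metric-independent quantities $\nu_i=\epsilon_iA_i^2\mu_i$, because the $\mu_i$ themselves change with the $A_i$ and the ratios $A_1^2/A_2^2$, $A_3^2/A_2^2$ stay in $(0,1)$. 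One writes $\sum_i\mu_i=\nu_1(A_1^{-2}+A_2^{-2})+\nu_3(A_3^{-2}+A_2^{-2})$ and lets $A_1\to0$ (resp.\ $A_3\to0$) to get $\nu_1\ge0$ (resp.\ $\nu_3\ge0$). In that limit $[\omega]^2$ degenerates to a positive multiple of $PD[C_1]$ (resp.\ $PD[C_3]$). So the paper's argument is in effect a limiting version of yours.

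\textbf{Comparison.} Pairing with $[C_i]$ directly works at a single fixed K\"ahler structure and needs no degeneration of the metric. It also treats $\mu_2$ on the same footing as $\mu_1,\mu_3$. What it costs is the $\SU(3)$-translation argument plus positivity of local intersection multiplicities (equivalently, $\deg\mathcal{O}(S)|_C\ge0$ for an irreducible curve $C\not\subset S$). Both ingredients are standard, and you state them correctly.
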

\begin{proof}
	Using the Maurer--Cartan Equation \eqref{eq:Maurer_Cartan_eqs}, we find that
\[d\alpha_\surface=\frac{\ii}{2}(\mu_1A_1^2-\mu_2A_2^2+\mu_3A_3^2)\overline{a_1\wedge a_2\wedge a_3}.\]
Thus, as $\alpha_\surface$ is closed, we must have \[\sum_{i=1}^3 \epsilon_i A_i^2 \mu_i=0.\] 
Let $\surface$ be a hypersurface. Then, for any K\"ahler form
	\begin{align*}
		0 & < \Vol(\surface) = \int_\surface \frac{\omega^2}{2} = \int_X \alpha_\surface \wedge \frac{\omega^2}{2} =  \biggl(\sum_{i=1}^3 \mu_i \biggr)\int_X \frac{\omega^3}{3!}.
	\end{align*}
	Hence, $\sum_{i=1}^3 \mu_i>0$. Using the closedness to eliminate $\mu_2$, we find that this condition turns into
	\[\mu_1 \left(1+\frac{A_1^2}{A_2^2}\right) + \mu_3 \left(1+\frac{A_3^2}{A_2^2}\right) >0.\]
	If either $\mu_1$ or $\mu_3$ was to be negative, say $\mu_1$, then we could pick $\frac{A_1^2}{A_2^2}$ sufficiently large and $\frac{A_3^2}{A_2^2}$ sufficiently small so that the left hand side was negative, thus leading to a contradiction.
\end{proof}

\section{The rank one dHYM equations}\label{1dHYM}
To be absolutely clear, let us note that for the rest of the paper we use the integrable complex structure $J^i$ defined by setting $(\epsilon_1,\epsilon_2,\epsilon_3)=(1,-1,1)$. We nonetheless will continue to carry along various $\epsilon_i$ in many computations without specializing as doing so simplifies notation.

\subsection*{The phase of solutions}
Let us return to the dHYM equations (\ref{eqn:dHYM}) on $(\bbF_2,\omega)$. We shall consider invariant $(1,1)$-forms given by 
\begin{equation}\label{eq:kappa}
	\kappa = \frac{\ii}{2} \left( \lambda_1 \alpha_1 \wedge \overline{\alpha_1} + \lambda_2 \alpha_2 \wedge \overline{\alpha_2} + \lambda_3 \alpha_3 \wedge \overline{\alpha_3} \right),
\end{equation}
for $\lambda_i \in \Reals$ constant. In particular the sum of the $\arctan(\lambda_i)$ is also constant. Therefore any such $\kappa$ yields a solution to the dHYM equations (\ref{eqn:dHYM}).

We shall now prove Theorem \ref{thm:Surjective_INTRO} which we restate here in an equivalent form.

\begin{theorem}\label{thm:Theta_surjective}
	Let $\omega$ be a homogeneous K\"ahler metric on $\bbF_2$. Then, for any $\Omega \in \Homology^{1,1}(\bbF_2, \Reals)$, there is a solution $\kappa_\Omega \in \Omega$ of the dHYM equations.
	Furthermore, the map  
	\begin{align*}
		\Homology^{1,1}(\bbF_2, \Reals) \to \bigl(-\frac{3\pi}{2} , \frac{3\pi}{2}\bigr);\quad
		\Omega\mapsto \Theta_\omega(\kappa_\Omega)
	\end{align*}
	is surjective.
\end{theorem}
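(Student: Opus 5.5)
The plan is to exploit homogeneity: every class contains an $\SU(3)$-invariant representative, and invariant representatives automatically solve the equation. By Lemma \ref{lem:cohomology}, every $\Omega\in\Homology^{1,1}(\bbF_2,\Reals)=\Homology^2(\bbF_2,\Reals)$ is represented by an invariant form $\kappa_\Omega$ of the shape \eqref{eq:kappa}, with constants $\lambda_1,\lambda_2,\lambda_3$. These constants are subject only to the closedness constraint $\sum_i \epsilon_i A_i^2\lambda_i=0$, that is,
\[
A_1^2\lambda_1 - A_2^2\lambda_2 + A_3^2\lambda_3=0 .
\]
Conversely, every triple satisfying this linear constraint gives a closed invariant form, hence a class. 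Since $\omega=\frac{\ii}{2}\sum_i \alpha_i\wedge\overline{\alpha_i}$, the $\alpha_i$ form a unitary coframe. In this coframe $\kappa\omega^{-1}$ is diagonal with eigenvalues $\lambda_1,\lambda_2,\lambda_3$. Therefore $\Theta_\omega(\kappa_\Omega)=\sum_i\arctan(\lambda_i)$ is a constant function on $\bbF_2$, and by \eqref{eqn:dHYM 2} $\kappa_\Omega$ solves the dHYM equation. This gives the first assertion of the theorem.

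For surjectivity, note that the map $\Omega\mapsto\Theta_\omega(\kappa_\Omega)$, written in the coordinates $(\lambda_1,\lambda_3)$, is
\[
(\lambda_1,\lambda_3)\mapsto \arctan\lambda_1+\arctan\lambda_3+\arctan\Bigl(\frac{A_1^2\lambda_1+A_3^2\lambda_3}{A_2^2}\Bigr),
\]
where we solved the constraint for $\lambda_2$. The key observation is that $\epsilon_2=-1$ is opposite to $\epsilon_1=\epsilon_3=1$. Consequently, when $\lambda_1$ and $\lambda_3$ have the same sign, $\lambda_2$ has that sign too. I would restrict to the diagonal ray $\lambda_1=\lambda_3=t$, $t\in\Reals$, which gives
\[
f(t)=2\arctan t+\arctan\Bigl(\frac{A_1^2+A_3^2}{A_2^2}\,t\Bigr).
\]
This is continuous and strictly increasing, with $f(t)\to\pm\frac{3\pi}{2}$ as $t\to\pm\infty$. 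By the intermediate value theorem, $f$ hits every value in $(-\frac{3\pi}{2},\frac{3\pi}{2})$. Moreover, each $\arctan$ lies in $(-\pi/2,\pi/2)$, so the image of the whole map is contained in that interval; this shows the codomain stated in the theorem is correct.

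The only point requiring care is the identification of $\Theta_\omega$ as the honest real-valued sum $\sum\arctan\lambda_i$ rather than a quantity modulo $\pi$. In particular, one should check that the eigenvalues of $\kappa\omega^{-1}$ are exactly the $\lambda_i$ and not $\epsilon_i\lambda_i$ or a rescaling of them. This follows directly from the normalization \eqref{eq:SU(3)structure_Flag1}, since $\kappa$ and $\omega$ are simultaneously diagonal in the same basis $\frac{\ii}{2}\alpha_i\wedge\overline{\alpha_i}$. With that identification in place, everything else reduces to the elementary one-variable argument above. I do not expect any substantial obstacle beyond this identification.
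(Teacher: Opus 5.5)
Your proposal is correct and follows essentially the same route as the paper: invariant representatives have constant eigenvalues and hence solve the equation, and surjectivity follows from continuity of $\sum_i\arctan\lambda_i$ together with the intermediate value theorem. The only cosmetic difference is that the paper parametrizes classes by the line-bundle coordinates $m=(m_1,m_3)$, with $\lambda_i=-2m_i/(\epsilon_iA_i^2)$, rather than by $(\lambda_1,\lambda_3)$ via Lemma \ref{lem:cohomology}; note also that the K\"ahler condition $A_2^2=A_1^2+A_3^2$ makes your coefficient $\frac{A_1^2+A_3^2}{A_2^2}$ equal to $1$, so your diagonal ray is just $\kappa=t\omega$ and $f(t)=3\arctan t$.
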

\begin{proof}
	From Equation \eqref{eq:curvature U(1)}, the curvature of the connection $\ii \beta$ on the $\U(1)$-bundle $L_\beta$ over $\bbF_2$ can be written as the $(1,1)$-form $\ii \kappa$ for $\kappa$ as in \eqref{eq:kappa} with 
	\[\lambda_i = - \frac{2m_i}{\epsilon_i A_i^2}, \ \text{for }i=1,2,3.\]
	As a function over $\bbF_2$, $\lambda_i$ is constant and so we obtain a solution for any such class. We can compute the resulting phase using $\epsilon_1=1=\epsilon_3$, $\epsilon_2=-1$, $m_1+m_2+m_3=0$, and $A_2^2=A_1^2+A_3^2$, which gives
	\begin{align*}
		\Theta_{\omega}(-\ii F_m) & = - \sum_{i=1}^3 \arctan \left( \frac{2m_i}{\epsilon_i A_i^2}\right)  \\
		& = - \arctan \left(\frac{2m_1}{A_1^2} \right)  + \arctan \left( \frac{2m_2}{A_2^2} \right)  - \arctan \left( \frac{2m_3}{A_3^2} \right)  \\
		& = - \arctan \left( \frac{2m_1}{A_1^2} \right)  - \arctan \left( \frac{2m_1+2m_3}{A_1^2+A_3^2} \right)  - \arctan \left( \frac{2m_3}{A_3^2} \right) .
	\end{align*}
	Furthermore, we can consider real $(1,1)$-classes $[\kappa_m]$ by having $m=(m_1,m_3) \in \Reals^2$. Then, the above function $m \mapsto \Theta_{\omega}(\kappa_m)$ is continuous and we have
\[\lim_{m_1,m_3 \to \pm \infty} = \mp \frac{3\pi}{2},\]
	and so all intermediate values are achieved.
\end{proof}

\subsection{Preliminaries on central charges}
Direct computation gives
\[\frac{(\omega+ \ii \kappa)^3}{\omega^3} = \prod_{i=1}^3 (1+ \ii\lambda_i)  = \biggl(\prod_{i=1}^3 \sqrt{1+ \lambda_i^2 }\biggr) \exp \biggl(\ii \sum_{i=1}^3 \arctan (\lambda_i) \biggr).\]
In particular $\Re (e^{-\ii\theta} (\omega+ \ii \kappa)^3 ) > 0$ if and only if 
\[\sum_{i=1}^3 \arctan (\lambda_i) - \theta \in \biggl(-\frac{\pi}{2} , \frac{\pi}{2}\biggr) \ \mod 2 \pi .\]
On the other hand $\Im (e^{-\ii \theta} (\omega+ \ii \kappa)^3) = 0$ if and only if
\[\sum_{i=1}^3 \arctan (\lambda_i) = \theta \mod \pi .\]
We therefore conclude that the condition that $\Im (e^{-\ii\theta} (\omega+ \ii \kappa)^3 )= 0$ and $\Re (e^{-\ii\theta} (\omega+ \ii \kappa)^3 )> 0$ means that 
\begin{equation}\label{eq:dHYM_arctan}
	\sum_{i=1}^3 \arctan (\lambda_i) = \theta  \mod 2 \pi ,
\end{equation}
and a solution $\kappa$ to this equation is called supercritical if $\theta= \sum_{i=1}^3 \arctan(\lambda_i) \in \left(\frac{\pi}{2} ,  \frac{3\pi}{2}\right)$, and hypercritical if $\theta \in \left( \pi ,  \frac{3\pi}{2}\right)$. Notice that by changing all $\lambda_i$ to $-\lambda_i$, the angle $\theta$ changes to $- \theta$. Hence, it also make sense to consider the case in which $|\theta|> \frac{\pi}{2}$ and we shall refer to these solutions as supercritical up to sign.

\begin{remark}
	We note that
	\begin{equation}\label{eq:product}
		\prod_{i=1}^3 (1+ \ii\lambda_i) = \left(1- \sumcyclic \lambda_j \lambda_k\right) + \ii \left( \sum_{i=1}^3\lambda_i - \lambda_1 \lambda_2 \lambda_3 \right).
	\end{equation}
As a consequence,
	\[\sum_{i=1}^3 \arctan (\lambda_i) = \arctan \left( \frac{\sum_{i=1}^3\lambda_i - \lambda_1 \lambda_2 \lambda_3}{ 1- \sumcyclic  \lambda_j \lambda_k } \right) \mod \pi,\]
and
\begin{equation}\label{eq:tan_three_arctan}
	\tan\biggl(\sum_{i=1}^3 \arctan (\lambda_i)\biggr) =  \frac{\sum_{i=1}^3\lambda_i - \lambda_1 \lambda_2 \lambda_3}{ 1- \sumcyclic \lambda_j \lambda_k }.
\end{equation}
This formula is particularly helpful to understand the boundary of the supercritical regime, where $\sum_{i=1}^3 \arctan (\lambda_i)=\tfrac\pi2$, whence $1-\sumcyclic \lambda_j \lambda_k=0$.
\end{remark}

\begin{remark}[Rescaling]
	Suppose we find a solution $\kappa$ and we rescale the metric by $\omega \mapsto t \omega$. Then, we find that
	\begin{align*}
		\frac{(t\omega+ \ii \kappa)^3}{(t\omega)^3} &= \frac{(\omega+ \ii t^{-1}\kappa)^3}{\omega^3} = \prod_{i=1}^3 (1+ \ii t^{-1}\lambda_i)  \\
		&= \left(\prod_{i=1}^3 \sqrt{1+ t^{-2} \lambda_i^2 }\right) \exp \left( \ii \sum_{i=1}^3 \arctan (t^{-1}\lambda_i) \right).
	\end{align*}
	Thus
	\begin{equation}
		\Theta_{t\omega}(\kappa) = \Theta_{\omega}(t^{-1}\kappa)=\sum_{i=1}^3 \arctan(t^{-1}\lambda_i).
	\end{equation}
	This quantity can be made as close to zero as we want by making $t$ large.
\end{remark}

For $V$ being either $\bbF_2$, a hypersurface, or a curve, we now compute the central charges $Z_V(\kappa)$ (defined in Definition \ref{def:Z}) in terms of the coefficients of $\kappa$ and of an invariant representative of the Poincar\'e dual to $V$.

\begin{lem}[Central charges]\label{lem:central_charges}
	Let $C$ be a curve in $\bbF_2$ whose Poincar\'e dual is represented by $\alpha_C= \sumcyclic \mu_{i} (\frac{\ii}{2} \alpha_j \wedge \overline{\alpha_j}) \wedge (\frac{\ii}{2} \alpha_k \wedge \overline{\alpha_k})$.  
	Let $\surface$ be a hypersurface in $\bbF_2$ whose Poincar\'e dual is represented by $\alpha_\surface= \sum_{i=1}^3 \mu_{i} \frac{\ii}2 \alpha_i \wedge \overline{\alpha_i}$. Let $\kappa=\sum_{i=1}^3 \lambda_{i}  \frac{\ii}2\alpha_i \wedge \overline{\alpha_i}$ represent a class in $\Homology^{1,1}(\bbF_2,\Reals)$. We then have
	\begin{align}
		Z_C(\kappa)&=\left[ - \biggl( \sum_{i=1}^3 \mu_i \lambda_i \biggr)  + \ii \biggl( \sum_{i=1}^3 \mu_i  \biggr)  \right]\Vol(\bbF_2),\label{eq:ZC}\\
		Z_\surface(\kappa)&=\left[ \biggl( \sum_{i=1}^3 \mu_i  - \sumcyclic \mu_i \lambda_j \lambda_k \biggr) + \ii  \biggl( \sumcyclic \mu_i (\lambda_j + \lambda_k) \biggr)  \right]\Vol(\bbF_2),\label{eq:ZV}\\
		Z_{\bbF_2}(\kappa)&= \left[ \biggl( \sum_{i=1}^3\lambda_i -  \lambda_1 \lambda_2 \lambda_3 \biggr) - \ii \biggl(1- \sumcyclic \lambda_j \lambda_k\biggr)  \right]\Vol(\bbF_2).\label{eq:ZX}
	\end{align}
\end{lem}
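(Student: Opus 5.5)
The plan is to reduce everything to algebra in the exterior algebra generated by the three invariant $2$-forms
\[
e_i:=\tfrac{\ii}{2}\alpha_i\wedge\overline{\alpha_i},\qquad i=1,2,3 .
\]
These satisfy $e_i\wedge e_i=0$ and pairwise commute. The Kähler form is $\omega=e_1+e_2+e_3$, so $\omega^3/3!=e_1\wedge e_2\wedge e_3$ and hence $\int_{\bbF_2}e_1\wedge e_2\wedge e_3=\Vol(\bbF_2)$.

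By Definition \ref{def:Z}, $Z_V(\kappa)=-\int_V e^{-\ii(\omega+\ii\kappa)}=-\int_V e^{\kappa-\ii\omega}$. Only the term of degree $2\dim_\bbC V$ contributes, so $Z_V(\kappa)=-\frac{1}{p!}\int_V(\kappa-\ii\omega)^p$ with $p=\dim_\bbC V$. Since the integrand is closed, Poincaré duality turns $\int_V$ into $\int_{\bbF_2}\alpha_V\wedge(\cdot)$. This uses the invariant representatives of $PD[V]$ given in the hypotheses (cf.\ Lemmas \ref{lem:PD curves} and \ref{lem:PD hypersurface}). The key simplification is that
\[
\kappa-\ii\omega=\sum_{i=1}^3(\lambda_i-\ii)\,e_i ,
\]
and the $e_i$ square to zero.

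\textbf{Step 1 (curves).} Here $p=1$. Using $\alpha_C\wedge e_i=\mu_i\,e_1\wedge e_2\wedge e_3$, we get
\[
Z_C=-\sum_i\mu_i(\lambda_i-\ii)\Vol(\bbF_2),
\]
which is \eqref{eq:ZC}.

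\textbf{Step 2 (hypersurfaces).} Here $p=2$. Because $e_i\wedge e_i=0$,
\[
\tfrac12(\kappa-\ii\omega)^2=\sumcyclic(\lambda_j-\ii)(\lambda_k-\ii)\,e_j\wedge e_k .
\]
Wedging with $\alpha_\surface$ picks out $\mu_i$ against $e_j\wedge e_k$. Expanding $(\lambda_j-\ii)(\lambda_k-\ii)=\lambda_j\lambda_k-1-\ii(\lambda_j+\lambda_k)$ and negating gives \eqref{eq:ZV}.

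\textbf{Step 3 ($\bbF_2$ itself).} Here $p=3$. The same reasoning gives
\[
\tfrac16(\kappa-\ii\omega)^3=\prod_i(\lambda_i-\ii)\,e_1\wedge e_2\wedge e_3 .
\]
Factor $\prod_i(\lambda_i-\ii)=(-\ii)^3\prod_i(1+\ii\lambda_i)=\ii\prod_i(1+\ii\lambda_i)$. Then apply \eqref{eq:product} and negate, which yields \eqref{eq:ZX}.

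The only delicate point is bookkeeping rather than a real obstacle: checking the signs and the sign conventions. Specifically, one must confirm that the orientation given by $e_1\wedge e_2\wedge e_3$ is the complex orientation for $J^i$, since the $\epsilon_i$ signs enter $\alpha_i$. One must also confirm that the factors of $\tfrac{\ii}{2}$ are absorbed consistently, so that $\omega^3/3!=e_1\wedge e_2\wedge e_3$ with no extra sign. I would verify this first using Lemma \ref{lemma:closed_form} and the positivity of $\omega$. After that, the three computations are direct expansions.
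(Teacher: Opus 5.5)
Your proposal is correct and is essentially the paper's proof. In both, you take the top-degree part of $e^{\kappa-\ii\omega}$, transfer the integral to $\bbF_2$ via the invariant Poincar\'e dual representatives, and expand using $e_i\wedge e_i=0$; the paper writes the top-degree term as $(-\ii)^p(\omega+\ii\kappa)^p/p!$ and uses \eqref{eq:product}, which is the same computation. The orientation check you flag is not actually needed: $\omega^3/3!=e_1\wedge e_2\wedge e_3$ holds purely algebraically, and each $e_i$ is a positive $(1,1)$-form because the $\alpha_i$ are $(1,0)$-forms for $J^i$.
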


\begin{proof}
For $\bbF_2$,
we compute that
\begin{align*}
	Z_{\bbF_2}(\kappa) & = -  \ii \frac{(\omega+\ii \kappa)^3}{\omega^3}  \int_{X} \frac{\omega^3}{3!}, \\
	& =   \left[ \biggl( \sum_{i=1}^3\lambda_i -  \lambda_1 \lambda_2 \lambda_3 \biggr) - \ii \biggl(1- \sumcyclic \lambda_j \lambda_k\biggr)  \right]  \int_{\bbF_2} \frac{ \omega^3}{3!}.
\end{align*}

For the hypersurface $\surface$, we  compute that
\[Z_{\surface}(\kappa)  = - \int_{\surface} \exp( - \ii ( \omega + \ii \kappa)) = \frac{1}{2} \int_{\surface} ( \omega + \ii \kappa)^2 = \frac{1}{2} \int_{\bbF_2} \alpha_\surface \wedge ( \omega + \ii \kappa)^2 .\]
Since
\begin{align*}
	\frac{1}{2}\alpha_\surface \wedge (\omega + \ii \kappa)^2 & = \alpha_\surface \wedge \left( \sumcyclic (1+\ii\lambda_j ) (1+\ii \lambda_k) \left( \frac{\ii}{2} \alpha_j \wedge \overline{\alpha_j} \right) \wedge \left( \frac{\ii}{2} \alpha_k \wedge \overline{\alpha_k} \right) \right) \\
	& = \sumcyclic \mu_i  \left( (1- \lambda_j \lambda_k ) + \ii (\lambda_j + \lambda_k) \right) \frac{\omega^3}{3!},
\end{align*}
we have
\begin{align*}
	Z_{\surface}(\kappa) =  \left[ \biggl( \sum_{i=1}^3 \mu_i  - \sumcyclic \mu_i \lambda_j \lambda_k \biggr) + \ii  \left( \sumcyclic \mu_i (\lambda_j + \lambda_k) \right)  \right] \int_{X} \frac{\omega^3}{3!} .
\end{align*}

For the curve $C$, we compute that 
\begin{align*}
	Z_C (\kappa) = - \int_C \exp (-\ii (\omega + \ii \kappa)) = - \int_C \kappa +\ii \int_C \omega  = - \int_X \kappa \wedge \alpha_C + \ii \int_X \omega \wedge \alpha_C .
\end{align*}
Inserting the formulas above for $\alpha_C$ and $\kappa$ we find
\begin{align*}
	Z_C(\kappa) & = - \int_X \left(\sum_{i=1}^3 \mu_i \lambda_i ) \frac{\omega^3}{3!} + \ii \int_X (\sum_{i=1}^3 \mu_i \right) \frac{\omega^3}{3!} \\
	& = \left[ - \biggl( \sum_{i=1}^3 \mu_i \lambda_i \biggr)  + \ii \biggl( \sum_{i=1}^3 \mu_i  \biggr)  \right] \int_X \frac{\omega^3}{3!} .
\end{align*}
The proof is now complete.
\end{proof}

	\subsection{Quotients of central charges}
	
	In this subsection our goal is to determine the signs of the central charge quotients. We start with the case of hypersurfaces. 

	\begin{lemma} \label{lem:ZVZXpositive}
		For $\surface$ a hypersurface, 
	 \[\Im \left(\frac{Z_{\surface}(\kappa)}{Z_{\bbF_2}(\kappa)} \right)>0.\]
	 \end{lemma}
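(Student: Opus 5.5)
The plan is a direct computation from the formulas of Lemma~\ref{lem:central_charges}, organised so that the quotient becomes a sum of terms with transparent signs. Let $\omega$ be the homogeneous K\"ahler form and $\kappa=\sum_{i}\lambda_i\frac{\ii}{2}\alpha_i\wedge\overline{\alpha_i}$ the invariant representative. By Lemma~\ref{lem:cohomology} and the Poincar\'e duality pairing, I will take $PD[\surface]$ to be represented by an invariant form $\alpha_\surface=\sum_i\mu_i\frac{\ii}{2}\alpha_i\wedge\overline{\alpha_i}$.

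First, I rewrite the three central charges in factored form instead of expanding real and imaginary parts. From the proof of Lemma~\ref{lem:central_charges},
\[
Z_{\bbF_2}(\kappa)=-\ii\prod_{l=1}^3(1+\ii\lambda_l)\,\Vol(\bbF_2),
\qquad
Z_\surface(\kappa)=\sumcyclic \mu_i(1+\ii\lambda_j)(1+\ii\lambda_k)\,\Vol(\bbF_2).
\]
The second identity is just the intermediate expression for $\tfrac12\alpha_\surface\wedge(\omega+\ii\kappa)^2$ in that proof. In particular $Z_{\bbF_2}(\kappa)\neq0$, since every factor $1+\ii\lambda_l$ is nonzero, so the quotient is well defined.

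Second, I divide term by term. For each cyclic $(i,j,k)$,
\[
\frac{(1+\ii\lambda_j)(1+\ii\lambda_k)}{-\ii\prod_l(1+\ii\lambda_l)}=\frac{\ii}{1+\ii\lambda_i}=\frac{\lambda_i+\ii}{1+\lambda_i^2}.
\]
This gives
\[
\Im\left(\frac{Z_\surface(\kappa)}{Z_{\bbF_2}(\kappa)}\right)=\sum_{i=1}^3\frac{\mu_i}{1+\lambda_i^2}.
\]

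Third, I bring in Lemma~\ref{lem:PD hypersurface}. It gives $\mu_1,\mu_2,\mu_3\ge0$, and its proof gives $\sum_i\mu_i>0$, because $\Vol(\surface)>0$. So at least one $\mu_i$ is strictly positive. Every summand is then nonnegative and at least one is positive, which proves the claim.

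The only delicate point is the sign bookkeeping in the last step. I need all three $\mu_i$ to be nonnegative, and not just $\mu_1$ and $\mu_3$. Lemma~\ref{lem:PD hypersurface} supplies this through closedness, $\mu_2A_2^2=\mu_1A_1^2+\mu_3A_3^2$, together with $\epsilon=(1,-1,1)$. I will cite that lemma carefully rather than recompute it.
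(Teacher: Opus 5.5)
Your proof is correct. It uses the same two ingredients as the paper's proof, namely the formulas of Lemma~\ref{lem:central_charges} and the sign information from Lemma~\ref{lem:PD hypersurface}, but organises the algebra differently.

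The paper multiplies $Z_\surface$ by the conjugate of $Z_{\bbF_2}$ and expands. After several lines it reaches $\sumcyclic\mu_i(1+\lambda_j^2)(1+\lambda_k^2)$, written out in expanded form. You instead keep $Z_{\bbF_2}=-\ii\prod_l(1+\ii\lambda_l)\Vol(\bbF_2)$ in product form and cancel factor by factor. This gives $\sum_i\mu_i/(1+\lambda_i^2)$, which is exactly the paper's quantity divided by $\prod_l(1+\lambda_l^2)=|Z_{\bbF_2}(\kappa)|^2/\Vol(\bbF_2)^2$.

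Your route is shorter and makes $Z_{\bbF_2}(\kappa)\neq0$ explicit. It is also closer in spirit to the pointwise argument in the appendix.

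It is also more careful about signs. The paper's proof asserts $\mu_1,\mu_2,\mu_3>0$, which Lemma~\ref{lem:PD hypersurface} does not provide; for example, the hypersurface dual to $\frac{\ii}{2\pi}F_{(d,0)}$ has $\mu_3=0$. You use only $\mu_i\ge0$ and $\sum_i\mu_i>0$, which is exactly what either version of the computation needs.
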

	 \begin{proof}
		 From lemmas  \ref{lem:central_charges} and \ref{lem:PD hypersurface}, 
we know there are $\mu_1,\mu_2,\mu_3>0$ and $\lambda_1,\lambda_2,\lambda_3$ such that
	 \[
	 \frac{Z_{\surface}(\kappa)}{Z_{\bbF_2}(\kappa)}=\frac{  \bigl( \sum_{i=1}^3 \mu_i  - \sumcyclic \mu_i \lambda_j \lambda_k \bigr) + \ii  \bigl( \sumcyclic \mu_i (\lambda_j + \lambda_k) \bigr)}{\bigl( \sum_{i=1}^3\lambda_i -  \lambda_1 \lambda_2 \lambda_3 \bigr) - \ii \bigl(1- \sumcyclic \lambda_j \lambda_k\bigr)}.
	 \]
Therefore the sign of the imaginary part of $\frac{Z_{\surface}(\kappa)}{Z_{\bbF_2}(\kappa)}$ is of the same as the sign of the imaginary part of 
\[ \biggl( \sum_{i=1}^3 \mu_i  - \sumcyclic \mu_i \lambda_j \lambda_k  + \ii   \sumcyclic \mu_i (\lambda_j + \lambda_k)\biggr)\biggl( \sum_{i=1}^3\lambda_i -  \lambda_1 \lambda_2 \lambda_3  +\ii \bigl(1- \sumcyclic \lambda_j \lambda_k\bigr)\biggr).\]
This imaginary part is equal to
 \begin{align*}
\sum_{i=1}^3\lambda_i&\sumcyclic\mu_i(\lambda_j+\lambda_k)- \lambda_1 \lambda_2 \lambda_3\sumcyclic\mu_i(\lambda_j+\lambda_k)-\sumcyclic\mu_i\lambda_j\lambda_k\\
&\quad-\sumcyclic\lambda_j\lambda_k\biggl(\sum_{i=1}^3\mu_i-\sumcyclic\mu_i\lambda_j\lambda_k\biggr)+\sum_{i=1}^3\mu_i\\
=&\sumcyclic\mu_i\bigl[(\lambda_j+\lambda_k)^2- \lambda_1 \lambda_2 \lambda_3(\lambda_j+\lambda_k)-2\lambda_j\lambda_k+\lambda^2_j+\lambda^2_k\bigr]\\
&\quad+\sumcyclic\mu_i\lambda_i\lambda_j\lambda_k(\lambda_j+\lambda_k)
+\sum_{i=1}^3\mu_i\\
=&\sumcyclic\mu_i\left[\lambda^2_j+\lambda^2_k- \lambda_1 \lambda_2 \lambda_3(\lambda_j+\lambda_k)+\lambda^2_j\lambda^2_k\right]+\lambda_1 \lambda_2 \lambda_3\sum_{i=1}^3\mu_i(\lambda_j+\lambda_k)+\sum_{i=1}^3\mu_i\\
=&\sumcyclic\mu_i[\lambda^2_j+\lambda^2_k+\lambda^2_j\lambda^2_k]+\sum_{i=1}^3\mu_i.\\
 \end{align*}
 Therefore
\begin{equation}\label{eq:central charge quotient hypersurfaces}
	\sign \left[ \Im \biggl(\frac{Z_{\surface}(\kappa)}{Z_{\bbF_2}(\kappa)} \biggr) \right] = \sign \biggl[  \sum_{i=1}^3 \mu_i  + \sumcyclic \lambda_i^2 (\mu_j + \mu_k) + \sumcyclic \mu_i \lambda_j^2 \lambda_k^2 \biggr].
\end{equation}
This quantity is always positive. 
\end{proof}

\begin{lem}
	\label{lem:centralchargequotientcurves}
		Let $C$ be a curve whose Poincar\'e dual is the class of 
		$\sumcyclic \mu_i (\frac{\ii}{2} \alpha_j \wedge \overline{\alpha_j}) \wedge (\frac{\ii}{2} \alpha_k \wedge \overline{\alpha_k})$, and let  $\kappa=\sum_{i=1}^3 \lambda_i (\frac{\ii}{2} \alpha_i \wedge \overline{\alpha_i})$,
		we have
	\begin{equation}\label{eq:central charge quotient curves}
		\sign \left[ \Im \left(\frac{Z_C (\kappa)}{Z_{\bbF_2} (\kappa)}\right) \right] = \sign \left[ \sumcyclic \mu_i (1+\lambda_i^2) (\lambda_j + \lambda_k) \right] .
	\end{equation}
\end{lem}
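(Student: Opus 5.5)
We argue as in the proof of Lemma \ref{lem:ZVZXpositive}. By Lemma \ref{lem:central_charges},
\[
\frac{Z_C(\kappa)}{Z_{\bbF_2}(\kappa)}=\frac{-P+\ii M}{Q-\ii R},
\]
where
\[
P=\sum_{i=1}^3\mu_i\lambda_i,\qquad M=\sum_{i=1}^3\mu_i,\qquad Q=\sum_{i=1}^3\lambda_i-\lambda_1\lambda_2\lambda_3,\qquad R=1-\sumcyclic\lambda_j\lambda_k.
\]
The common factor $\Vol(\bbF_2)>0$ cancels. Multiplying numerator and denominator by $Q+\ii R$ turns the denominator into $Q^2+R^2=\prod_{i}(1+\lambda_i^2)>0$, by Equation \eqref{eq:product}. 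Hence the sign of the imaginary part of the quotient equals the sign of
\[
\Im\bigl((-P+\ii M)(Q+\ii R)\bigr)=MQ-PR .
\]

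It remains to expand $MQ-PR$ and show it equals $\sumcyclic \mu_i(1+\lambda_i^2)(\lambda_j+\lambda_k)$. Since both sides are linear in $(\mu_1,\mu_2,\mu_3)$ and the expressions are cyclically symmetric, it suffices to compare the coefficients of $\mu_1$. On the left this coefficient is
\[
(\lambda_1+\lambda_2+\lambda_3-\lambda_1\lambda_2\lambda_3)-\lambda_1(1-\lambda_1\lambda_2-\lambda_2\lambda_3-\lambda_3\lambda_1).
\]
Expanding, the $\lambda_1$ terms cancel and the two terms $\pm\lambda_1\lambda_2\lambda_3$ cancel, leaving
\[
\lambda_2+\lambda_3+\lambda_1^2\lambda_2+\lambda_1^2\lambda_3=(1+\lambda_1^2)(\lambda_2+\lambda_3),
\]
which is the coefficient of $\mu_1$ on the right. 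The coefficients of $\mu_2$ and $\mu_3$ follow by cyclic permutation, giving Equation \eqref{eq:central charge quotient curves}.

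This argument has no real obstacle; it is an algebraic identity. The one point to check is that we multiply by the conjugate of the denominator, whose product with the denominator is positive. We should also note why the quotient is defined: $Z_{\bbF_2}(\kappa)\neq0$ automatically, because $|Z_{\bbF_2}(\kappa)|^2=\Vol(\bbF_2)^2\prod_i(1+\lambda_i^2)>0$.
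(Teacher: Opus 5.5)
Your proof is correct and follows the same route as the paper: multiply by the conjugate of $Z_{\bbF_2}(\kappa)$, observe that $Q^2+R^2=\prod_i(1+\lambda_i^2)>0$, and expand the imaginary part $MQ-PR$. Your shortcut of comparing only the $\mu_1$-coefficient and invoking symmetry replaces the paper's full expansion, and your check that $Z_{\bbF_2}(\kappa)\neq 0$ makes explicit a point the paper leaves implicit.
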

\begin{proof}
	From Lemma \ref{lem:central_charges}, we see that
	 \[
		 \frac{Z_{C}(\kappa)}{Z_{\bbF_2}(\kappa)}=\frac{-\left( \sum_{i=1}^3 \mu_i \lambda_i \right)  + \ii \left( \sum_{i=1}^3 \mu_i  \right) }{\left( \sum_{i=1}^3\lambda_i -  \lambda_1 \lambda_2 \lambda_3 \right) - \ii \left(1- \sumcyclic \lambda_j \lambda_k\right)}.\]
	 Therefore the sign of the imaginary part of $\frac{Z_{C}(\kappa)}{Z_{\bbF_2}(\kappa)}$ is the sign of  
	 \begin{align*}
	\Im\Biggl[\biggl( -\bigl( \sum_{i=1}^3 &\mu_i \lambda_i \bigr)  + \ii \biggl( \sum_{i=1}^3 \mu_i  \biggr)\biggr)\biggl( \sum_{i=1}^3\lambda_i -  \lambda_1 \lambda_2 \lambda_3  + \ii \bigl(1- \sumcyclic \lambda_j \lambda_k\bigr)\biggr)\Biggr]\\
	 &=- \sum_{i=1}^3 \mu_i \lambda_i+ \sum_{i=1}^3 \mu_i \lambda_i\sumcyclic \lambda_j \lambda_k+ \sum_{i=1}^3 \mu_i \lambda_i+\sumcyclic \mu_i (\lambda_j+\lambda_k) -  \lambda_1 \lambda_2 \lambda_3\sum_{i=1}^3 \mu_i\\
	 &= \sumcyclic \mu_i \lambda^2_i( \lambda_j +\lambda_k)-  \lambda_1 \lambda_2 \lambda_3\sum_{i=1}^3 \mu_i+\sumcyclic \mu_i (\lambda_j+\lambda_k) -  \lambda_1 \lambda_2 \lambda_3\sum_{i=1}^3 \mu_i\\
	 &=\sumcyclic \mu_i (1+\lambda^2_i)( \lambda_j +\lambda_k).
	\end{align*}
	The claim follows.
\end{proof}

	\begin{proposition}\label{prop:sign quotients}
Let $C_1$, $C_2$, $C_3$ be the curves from Lemma \ref{lemma:C_i}. 
		Let $\beta_m= m_1 \beta_3 -  m_3 \beta_1$ for $m \in \bbZ^2$ regarded as a connection on $L_{\beta_m}$ and $F_m$ its curvature. Then
		\begin{align*}
			\sign \left[ \Im \left(\frac{Z_{C_1} (-\ii F_m)}{Z_{\bbF_2} (-\ii F_m)}\right) \right] & = - \sign \left[ m_1 + m_3 \frac{A_1^2 +2 A_3^2}{A_3^2}  \right] , \\
			\sign \left[ \Im \left(\frac{Z_{C_2} (-\ii F_m)}{Z_{\bbF_2} (-\ii F_m)}\right) \right] & = - \sign \left[ \frac{m_3}{A_3^2} +  \frac{m_1}{A_1^2} \right] , \\
			\sign \left[ \Im \left(\frac{Z_{C_3} (-\ii F_m)}{Z_{\bbF_2} (-\ii F_m)}\right) \right] & = -  \sign \left[ m_1 \frac{A_3^2+2A_1^2}{A_1^2} + m_3 \right]  .
		\end{align*}
	\end{proposition}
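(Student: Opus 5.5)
The plan is to specialise Lemma \ref{lem:centralchargequotientcurves} to the three curves $C_i$, using the Poincar\'e dual representatives from Lemma \ref{lem:PD curves} and the explicit eigenvalues of the curvature from Equation \eqref{eq:curvature U(1)}. Throughout I take for granted, as the statement implicitly does, that $Z_{\bbF_2}(-\ii F_m)\neq 0$, so that the quotients are defined.

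\textbf{Step 1: the curve data.} By Lemma \ref{lem:PD curves}, $PD[C_i]$ is represented by $\sumcyclic \mu_l (\frac{\ii}{2}\alpha_j\wedge\overline{\alpha_j})\wedge(\frac{\ii}{2}\alpha_k\wedge\overline{\alpha_k})$, where only the $i$-th coefficient is nonzero. That coefficient is $\mu_i=c/(A_j^2A_k^2)>0$. Substituting into \eqref{eq:central charge quotient curves}, the cyclic sum reduces to one term, giving
\[
\sign \Im\Bigl(\frac{Z_{C_i}(\kappa)}{Z_{\bbF_2}(\kappa)}\Bigr)=\sign\bigl[\mu_i(1+\lambda_i^2)(\lambda_j+\lambda_k)\bigr]=\sign(\lambda_j+\lambda_k),
\]
since $\mu_i>0$ and $1+\lambda_i^2>0$. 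Here $(i,j,k)$ is cyclic.

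\textbf{Step 2: the form data.} By \eqref{eq:curvature U(1)}, $-\ii F_m=\kappa_m=\sum_i\lambda_i\frac{\ii}{2}\alpha_i\wedge\overline{\alpha_i}$ with $\lambda_i=-2m_i/(\epsilon_iA_i^2)$. We use $(\epsilon_1,\epsilon_2,\epsilon_3)=(1,-1,1)$ and $m_2=-(m_1+m_3)$. We also use the K\"ahler condition \eqref{eq:Symplectic_Flag}, which for these signs reads $A_2^2=A_1^2+A_3^2$. This gives
\[
\lambda_1=-\frac{2m_1}{A_1^2},\qquad \lambda_2=-\frac{2(m_1+m_3)}{A_1^2+A_3^2},\qquad \lambda_3=-\frac{2m_3}{A_3^2}.
\]

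\textbf{Step 3: the three sign computations.} For $C_2$, $\lambda_1+\lambda_3=-2\bigl(\frac{m_1}{A_1^2}+\frac{m_3}{A_3^2}\bigr)$, which is the second formula directly.

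For $C_1$, I multiply $\lambda_2+\lambda_3$ by the positive number $(A_1^2+A_3^2)/2$. This gives
\[
-\Bigl[m_1+m_3+m_3\frac{A_1^2+A_3^2}{A_3^2}\Bigr]=-\Bigl[m_1+m_3\frac{A_1^2+2A_3^2}{A_3^2}\Bigr],
\]
which is the first formula.

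For $C_3$, the same manipulation applied to $\lambda_1+\lambda_2$ gives $-\bigl[m_1\frac{A_3^2+2A_1^2}{A_1^2}+m_3\bigr]$. This is the third formula, obtained from the first by the symmetry $1\leftrightarrow 3$.

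\textbf{Main obstacle.} The only delicate point is bookkeeping rather than analysis. One must correctly match which $\mu$ is nonzero for each $C_i$: the index $i$ whose root space is tangent to $C_i$, so that the surviving factor is $\lambda_j+\lambda_k$ for the other two indices. One must also remember to use $A_2^2=A_1^2+A_3^2$ and the sign $\epsilon_2=-1$. Multiplying by positive denominators does not change signs, so no case analysis is needed.
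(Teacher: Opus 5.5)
Your proposal is correct and follows the paper's own argument: you specialise Lemma \ref{lem:centralchargequotientcurves} to the single nonzero coefficient $\mu_i>0$ from Lemma \ref{lem:PD curves}, which reduces the sign to $\sign(\lambda_j+\lambda_k)$, and then clear positive denominators using $\lambda_i=-2m_i/(\epsilon_iA_i^2)$ and $A_2^2=A_1^2+A_3^2$. All three reductions check out, and you supply the algebra that the paper's one-line proof leaves implicit.
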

	\begin{proof}
		Lemma \ref{lem:PD curves} gives an expression for the Poincaré dual to $C_i$, while the curvature $F_m$ was computed in Equation \eqref{eq:curvature U(1)}. 
		Inserting the corresponding $\mu_i$ and $\lambda_j$ in Equation \eqref{eq:central charge quotient curves} and eliminating quantities that are obviously positive, we obtain the result.
	\end{proof}

	\begin{corollary}\label{cor:sign quotient}
		There are $\beta_m$ such that
		\[\sign \left[ \Im \Bigl(\frac{Z_{C_1} (-\ii F_m)}{Z_{\bbF_2} (-\ii F_m)}\Bigr) \right] = - \sign \left[ \Im \Bigl(\frac{Z_{C_3} (-\ii F_m)}{Z_{\bbF_2} (-\ii F_m)}\Bigr) \right]\]
	\end{corollary}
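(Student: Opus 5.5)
By Proposition \ref{prop:sign quotients}, the two signs in question are
\[
-\sign\Bigl[m_1 + m_3\tfrac{A_1^2+2A_3^2}{A_3^2}\Bigr]
\quad\text{and}\quad
-\sign\Bigl[m_1\tfrac{A_3^2+2A_1^2}{A_1^2}+m_3\Bigr].
\]
The corollary therefore reduces to finding $m=(m_1,m_3)\in\bbZ^2$ at which the two linear forms
\[
\ell_1(m)=m_1+a\,m_3, \qquad \ell_3(m)=b\,m_1+m_3,
\]
have strictly opposite signs, both being nonzero. Here $a=\frac{A_1^2+2A_3^2}{A_3^2}>2$ and $b=\frac{A_3^2+2A_1^2}{A_1^2}>2$. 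A sign pair such as $(+,0)$ would not count, so both forms must be nonzero.

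The key step is a direct choice of $m$. Take $m_3=1$ and $m_1=-N$ with $N$ a positive integer. Then $\ell_1=a-N$ and $\ell_3=1-bN$. Since $b>2$ and $N\ge 1$, we have $\ell_3<0$. If also $N<a$, then $\ell_1>0$, which is exactly what we need.

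Because $a>2$, the choice $N=1$ always satisfies $N<a$. So $m=(-1,1)$ works for every homogeneous K\"ahler structure. Concretely, $\ell_1=a-1>1>0$ and $\ell_3=1-b<-1<0$.

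Alternatively, one can note that the two lines $\ell_1=0$ and $\ell_3=0$ are distinct, since $ab\neq1$. They therefore cut out open cones in $\Reals^2$ on which the signs are opposite, and these cones contain lattice points. This also yields the open set $U_K$ of Theorem \ref{thm:Intro Different Signs}.

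The only real obstacle is bookkeeping. We must check that $Z_{\bbF_2}(-\ii F_m)\neq0$ for the chosen $m$, so that the quotients are defined. By \eqref{eq:ZX}, vanishing would require both $\sum\lambda_i-\lambda_1\lambda_2\lambda_3=0$ and $1-\sumcyclic\lambda_j\lambda_k=0$. That is equivalent to $\prod(1+\ii\lambda_i)=0$, which is impossible for real $\lambda_i$. Hence $Z_{\bbF_2}$ never vanishes, and the corollary follows.
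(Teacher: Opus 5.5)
Your proposal is correct and is essentially the paper's argument. The paper substitutes $m_3=-m_1$ into Proposition \ref{prop:sign quotients} to obtain the signs $\sign(m_1)$ and $-\sign(m_1)$, and your choice $m=(-1,1)$ is the instance $m_1=-1$ of that family. Your extra check that $Z_{\bbF_2}(-\ii F_m)=-\ii\prod_i(1+\ii\lambda_i)\Vol(\bbF_2)$ never vanishes is a detail the paper leaves implicit.
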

	\begin{proof}
		Take for instance the case where $m_3=-m_1$. Then
		\begin{align*}
			\sign \left[ \Im \Bigl(\frac{Z_{C_1} (-\ii F_m)}{Z_{\bbF_2} (-\ii F_m)}\Bigr) \right] &= 
				\sign \left[\frac{m_1(A_1^2+A_3^2)}{A_3^2}\right]=\sign(m_1),\text{ while}\\
			\sign \left[ \Im \Bigl(\frac{Z_{C_3} (-\ii F_m)}{Z_{\bbF_2} (-\ii F_m)}\Bigr) \right] &= 
				\sign \left[\frac{-m_1(A_1^2+A_3^2)}{A_1^2}\right]=-\sign(m_1).		\end{align*}
The result follows.
	\end{proof}
	
\subsection{The K\"ahler--Einstein case}
	
	In the K\"ahler--Einstein case we have in addition that $A_1=A=A_3$ and $A_2^2=2A^2$. Then, Proposition \ref{prop:sign quotients} shows that for $m \in \bbZ^2$ 
	\begin{equation}\label{eq:central charges in KE}
		\begin{aligned}
			\sign \left[ \Im \left(\frac{Z_{C_1} (-\ii F_m)}{Z_{\bbF_2} (-\ii F_m)}\right) \right] & =  - \sign \left[ m_1 + 3m_3 \right] , \\
			\sign \left[ \Im \left(\frac{Z_{C_2} (-\ii F_m)}{Z_{\bbF_2} (-\ii F_m)}\right) \right] & = - \sign \left[ m_1 + m_3 \right] , \\
			\sign \left[ \Im \left(\frac{Z_{C_3} (-\ii F_m)}{Z_{\bbF_2} (-\ii F_m)}\right) \right] & =  - \sign \left[ 3m_1 + m_3 \right]  .
		\end{aligned}
	\end{equation}
Notice that these signs do not depend on the scaling factor $A$. On the other hand, using the fact that $\lambda_i=-\frac{2m_i}{\epsilon_i A_i^2}$ for $-\ii F_m$, we have
	\begin{equation}\label{eq:lifted theta KE}
		\Theta_{\omega}(-\ii F_m) = - \arctan \left( \frac{2m_1}{A^2} \right) - \arctan \left( \frac{m_1+m_3}{A^2} \right)  - \arctan \left( \frac{2m_3}{A^2} \right)  \in \left( -\frac{3\pi}{2} , \frac{3\pi}{2} \right).
	\end{equation}

	In particular, for any $m \in \bbZ^2$ we can make $\Theta_{\omega}(-\ii F_m)$ as close to zero as we want by increasing $A$ (the scaling factor); and if $m_1m_3>0$ as close to $\pm \frac{3\pi}{2}$ as needed, by making $A$ small. Using this formula, we can completely characterise the $(1,1)$-classes, in the $m=(m_1,m_3)$ coordinates, that are supercritical up to sign meaning that $|\Theta_{\omega}(-\ii F_m)|> \frac{\pi}{2}$. Figure \ref{fig:supercritical-12} displays the supercritical regions for $A=1$ and $A=2$ for comparison.	Figure \ref{fig:samesign} shows the region for which  $\Im\bigl(\frac{Z_{C_i}(-\ii F_m)}{Z_{\bbF_2} (-\ii F_m)}\bigr)$ all have the same sign for $i=1,2,3$.
		Notice that by superposing the plots in figures \ref{fig:supercritical-12} and \ref{fig:samesign}, we find the non-supercritical $(1,1)$-classes for which the quotient of the central charges for curves is of fixed sign. For concreteness we plot the set of such classes for $A=1$ and $A=2$ in figures \ref{fig:exclusion}. The fact that this set is nonempty shows that Corollary \ref{cor:non-supercritical} is relevant.
		\begin{figure}[htbp]
		\centering
		\begin{subfigure}[t]{0.3\textwidth}
		    \centering
		    \includegraphics[width=\linewidth]{plotsupercritical12}
		    \caption{The supercritical regions $S_1$ and $S_2$, corresponding to $A=1$ (horizontal hatching) and $A=2$ (vertical hatching).}\label{fig:supercritical-12}
		\end{subfigure}
		\hfill
		\begin{subfigure}[t]{0.3\textwidth}
		    \centering
		    \includegraphics[width=\linewidth]{plotsamesign}
			\caption{The same-sign region $M$, where $\Im \frac{Z_{C_i}(\kappa_m)}{Z_{\bbF_2}(\kappa_m)}$ has the same sign for $i=1,2,3$.}\label{fig:samesign}
		\end{subfigure}
		\hfill
		\begin{subfigure}[t]{0.3\textwidth}
		    \centering
		    \includegraphics[width=\linewidth]{plotexclusion}
			\caption{The exclusion regions $M\setminus S_1$ and $M\setminus S_2$. 
	The larger region $M\setminus S_2$ is shown with diagonal hatching, and the smaller region $M\setminus S_1 \subset M\setminus S_2$ by cross-hatching.}\label{fig:exclusion}
		\end{subfigure}
		\caption{Comparison of the supercritical regions and the same-sign region in the $H^2(\bbF_2,\Reals)$ plane in the  $(m_1,m_3)$ coordinates.}
		\label{fig:supercritical-samesign-exclusion}
		\end{figure}

	\begin{corollary}[Non-supercritical classes for which $\Im \frac{Z_{V}(-\ii F_m)}{Z_{\bbF_2} (-\ii F_m)}>0$ for all subvariety $V$]
		\label{cor:non-supercritical}
		Let $\bbF_2$ be equipped with a homogeneous K\"ahler--Einstein metric. Then, for any $m=(m_1,m_3) \in \bbZ^2$ such that $3m_1+m_3<0$ and $m_1+3m_3<0$, there is $A_m>0$ such that for all $A>A_m$, the connection $\beta_m$ on $L_{\beta_m}$ is not a supercritical dHYM connection, but it satisfies $\Im \frac{Z_{V}(-\ii F_m)}{Z_{\bbF_2} (-\ii F_m)}>0$ for all analytic subvarieties $V \subset \bbF_2$. 
	\end{corollary}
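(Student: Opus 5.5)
The plan has two independent parts: first show that $\beta_m$ is not supercritical once $A$ is large, then check the sign of $\Im\bigl(Z_V/Z_{\bbF_2}\bigr)$ separately for hypersurfaces and for curves. Throughout I take $V$ to range over subvarieties of positive dimension, as in the Jacob--Sheu formulation where $k\in\{1,\ldots,n-1\}$. The point case needs a separate convention: with $Z_p=-1$, $\Im(Z_p/Z_{\bbF_2})$ has the sign of $\Im Z_{\bbF_2}=-(1-\sumcyclic\lambda_j\lambda_k)\Vol(\bbF_2)$, which is negative for large $A$, so points must be excluded for the statement to hold.

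\emph{Non-supercriticality.} By Equation \eqref{eq:lifted theta KE}, $\Theta_\omega(-\ii F_m)$ is a sum of three arctangents whose arguments are $O(A^{-2})$. Hence $\Theta_\omega(-\ii F_m)\to 0$ as $A\to\infty$, and there is $A_m$ with $|\Theta_\omega(-\ii F_m)|<\tfrac{\pi}{2}$ for all $A>A_m$. For such $A$ the connection is not supercritical, not even up to sign.

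We must also check that $Z_{\bbF_2}(-\ii F_m)\neq0$ for $A$ large. By \eqref{eq:ZX}, its imaginary part is $-(1-\sumcyclic\lambda_j\lambda_k)\Vol(\bbF_2)$, and $1-\sumcyclic\lambda_j\lambda_k\to1$ because every $\lambda_i\to0$.

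\emph{Hypersurfaces.} Lemma \ref{lem:ZVZXpositive} already gives $\Im\bigl(Z_\surface/Z_{\bbF_2}\bigr)>0$ for every hypersurface $\surface$. Its input is Lemma \ref{lem:PD hypersurface}, which makes the $\mu_i$ in the invariant representative of $PD[\surface]$ non-negative; this is all the positivity in \eqref{eq:central charge quotient hypersurfaces} needs. For this case nothing depends on $m$ or $A$.

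\emph{Curves.} The key observation is that
\[
\Im\bigl(Z_C/Z_{\bbF_2}\bigr)=\Im\bigl(Z_C\,\overline{Z_{\bbF_2}}\bigr)/|Z_{\bbF_2}|^2,
\]
and $Z_C$ is linear in the homology class $[C]$, by \eqref{eq:ZC} or simply by its definition as an integral over $C$. So the sign question is linear on $\Homology_2(\bbF_2,\Reals)$, and it suffices to control it on generators of the cone of effective curves. That cone is generated by the classes of the fibres of the two $\cp^1$-bundle projections $\proj_1,\proj_2$, which are the curves $C_1$ and $C_3$ of Lemma \ref{lemma:C_i}. Consequently every irreducible curve satisfies $[C]=a[C_1]+b[C_3]$ with $a,b\ge0$, not both zero. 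One can sanity-check this against Lemma \ref{lem:PD curves} by verifying that $[C_2]=[C_1]+[C_3]$.

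By \eqref{eq:central charges in KE}, the imaginary parts for $C_1$ and $C_3$ have signs $-\sign(m_1+3m_3)$ and $-\sign(3m_1+m_3)$. Both are positive under the hypotheses, for every $A$. Hence $\Im\bigl(Z_C/Z_{\bbF_2}\bigr)>0$ for all curves, and combining with the previous two paragraphs proves the corollary.

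The main obstacle is justifying the description of the effective cone. I would do it directly: an irreducible curve $C$ has $\int_C\omega'>0$ for every K\"ahler form $\omega'$. Pairing $[C]$ with the nef generators $c_1(L_{\beta_{(1,0)}})$ and $c_1(L_{\beta_{(0,1)}})$ from Lemma \ref{lemma:positive_bundle} shows that its coefficients with respect to the dual basis $[C_1],[C_3]$ are non-negative. This requires checking that these two bases are dual, up to positive scaling, using the explicit forms in Lemma \ref{lem:PD curves}.
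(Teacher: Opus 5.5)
Your proposal is correct and is essentially the paper's proof. Non-supercriticality comes from $\Theta_\omega(-\ii F_m)\to0$ as $A\to\infty$, hypersurfaces are handled by Lemma \ref{lem:ZVZXpositive}, and curves by linearity of $Z_C$ in $[C]$ together with $[C]=n_1[C_1]+n_3[C_3]$, $n_1,n_3\ge0$. Your pairing with the nef classes $c_1(L_{\beta_{(1,0)}})$, $c_1(L_{\beta_{(0,1)}})$ is just the boundary version of the paper's observation that $\int_C\omega$ is a positive multiple of $n_1A_1^2+n_3A_3^2$ for all $A_1,A_3>0$.

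Your extra checks are correct refinements that the paper leaves implicit: points must be excluded, and $Z_{\bbF_2}\neq0$ for large $A$. The only slip is cosmetic. In the explicit description of Lemma \ref{lemma:C_i}, the fibres of $\proj_1,\proj_2$ are $C_1$ and $C_2$. The curves $C_1,C_3$ are the extremal ones only for $J^i$, where $[C_2]=[C_1]+[C_3]$ as you verify. Your direct argument never uses the fibre identification, so the proof is unaffected.
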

	\begin{proof}
		Lemma \ref{lem:ZVZXpositive} shows that $\Im \frac{Z_{\surface}(-\ii F_m)}{Z_{\bbF_2} (-\ii F_m)}>0$ for  any hypersurface $\surface\subset \bbF_2$. Now we turn to the case in which $V=C$ is a curve. First, we point out that it is enough to impose that $\Im \frac{Z_{V}(-\ii F_m)}{Z_{\bbF_2} (-\ii F_m)}>0$ for $V=C_1$ and $V=C_3$. Indeed, given that $PD[C_1]$ and $PD[C_3]$ generate $\Homology^4(\bbF_2 , \bbZ)$, any other curve represents a class whose Poincar\'e dual is of the form $n_1 PD[C_1] + n_3 PD[C_3]$ with $n_1,n_3 \geq 0$. This follows from the fact that any holomorphic curve is calibrated with respect to any K\"ahler metric $\omega=\sum_{i=1}^3 \tfrac{\ii}{2} \alpha_i \wedge \overline{\alpha_i}$. Then, we must have that $$0<\int_{C}\omega = n_1 \int_{C_1}\omega + n_3 \int_{C_3} \omega = n_1 A_1^2 + n_3 A_3^2,$$ and for this to be positive for all $A_1,A_3>0$, we must have $n_1,n_3$ are non-negative.
		Hence, $Z_C= n_1 Z_{C_1}+ n_3Z_{C_3}$ and so $\Im \frac{Z_{C}(-\ii F_m)}{Z_{\bbF_2} (-\ii F_m)}$ is positive if both $\Im \frac{Z_{C_1}(-\ii F_m)}{Z_{\bbF_2} (-\ii F_m)}$ and $\Im \frac{Z_{C_3}(-\ii F_m)}{Z_{\bbF_2} (-\ii F_m)}$ are. Making use of Equations \eqref{eq:central charges in KE}, this positivity is ensured by the hypothesis that $3m_1+m_2<0$ and $m_1+3m_2<0$. Hence, we are left with proving that it is possible to increase $A$ so that $\beta_m$ is not supercritical. This follows from Equation \eqref{eq:lifted theta KE} and the fact the expression in the right hand side converges to zero as $A \to + \infty$ independently of $m$.
	\end{proof}

	\begin{remark}
		Again, one can consider real $(1,1)$-classes by letting $m \in \Reals^2$ instead of $\bbZ^2$. Then, we plot in figures \ref{fig:exclusion} the classes for which the signs of $\Im \frac{Z_{C_i}(\kappa_m)}{Z_{\bbF_2} (\kappa_m)}$ all coincide, but which are not supercritical. Recall that all classes admit solutions to the dHYM equation.
	\end{remark}

	Finally, figure \ref{fig:different signs} displays the region for which at least two of the $\Im \frac{Z_{C_i}(-\ii F_m)}{Z_{\bbF_2} (-\ii F_m)}$ have different signs. This region is obtained from the following result.
	
	\begin{corollary}[\citep{Theorem 1}{Jacob-Sheu-dHYM-blowup-Pn} does not generalize]
		Let $\bbF_2$ be equipped with the homogeneous K\"ahler--Einstein metric and $m \in \bbZ^2$. The following hold:
		\begin{itemize}
			\item if $m_1+3m_3<0<3m_1+m_3$, then $\Im \left( \frac{Z_{C_1}(-\ii F_m)}{Z_{\bbF_2} (-\ii F_m)} \right) >0>\Im \left( \frac{Z_{C_3}(-\ii F_m)}{Z_{\bbF_2} (-\ii F_m)} \right)$;
			\item if $m_1+3m_3>0>3m_1+m_3$, then $\Im \left( \frac{Z_{C_1}(-\ii F_m)}{Z_{\bbF_2} (-\ii F_m)} \right) <0<\Im \left( \frac{Z_{C_3}(-\ii F_m)}{Z_{\bbF_2} (-\ii F_m)} \right)$.
		\end{itemize}
		 Furthermore, in either situation, the connection $\beta_m$ on $L_{\beta_m}$ satisfies the dHYM connection.
	\end{corollary}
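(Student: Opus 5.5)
This is a direct specialisation of the Kähler--Einstein sign formulas \eqref{eq:central charges in KE}, which come from Proposition \ref{prop:sign quotients} with $A_1=A_3=A$ and $A_2^2=2A^2$. They give
\[
\sign \Im\Bigl(\tfrac{Z_{C_1}(-\ii F_m)}{Z_{\bbF_2}(-\ii F_m)}\Bigr) = -\sign(m_1+3m_3),\qquad
\sign \Im\Bigl(\tfrac{Z_{C_3}(-\ii F_m)}{Z_{\bbF_2}(-\ii F_m)}\Bigr) = -\sign(3m_1+m_3).
\]

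In the first case, $m_1+3m_3<0$ gives a positive sign for $C_1$. Likewise $3m_1+m_3>0$ gives a negative sign for $C_3$. The second case is identical with every inequality reversed, or equivalently follows by replacing $m$ with $-m$.

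One point needs care. Proposition \ref{prop:sign quotients} was obtained by discarding factors claimed to be obviously positive, and this is only meaningful if $Z_{\bbF_2}(-\ii F_m)\neq 0$. By \eqref{eq:ZX}, the imaginary part of $Z_{\bbF_2}$ is $-(1-\sumcyclic\lambda_j\lambda_k)\Vol(\bbF_2)$ and the real part is $(\sum\lambda_i-\lambda_1\lambda_2\lambda_3)\Vol(\bbF_2)$. Both vanish exactly when $\prod_i(1+\ii\lambda_i)=0$, which never happens for real $\lambda_i$. So the quotient is always well defined.

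To check the step behind Proposition \ref{prop:sign quotients}, I would insert $\mu_i$ from Lemma \ref{lem:PD curves} into Lemma \ref{lem:centralchargequotientcurves}. For $C_1$ we have $\mu_1>0$ and $\mu_2=\mu_3=0$, so the sign is $\sign(\lambda_2+\lambda_3)$. With $\lambda_i=-2m_i/(\epsilon_iA_i^2)$, this becomes $\sign\bigl(-(m_1+m_3)/A^2-2m_3/A^2\bigr)=-\sign(m_1+3m_3)$. The computation for $C_3$ is symmetric and gives $-\sign(3m_1+m_3)$.

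For the final assertion, the invariant curvature form $-\ii F_m$ has constant eigenvalues $\lambda_i$ relative to $\omega$. Hence $\Theta_\omega(-\ii F_m)=\sum\arctan\lambda_i$ is constant, so $-\ii F_m$ solves \eqref{eqn:dHYM}. This is exactly the observation following \eqref{eq:kappa} and in Theorem \ref{thm:Theta_surjective}, so $\beta_m$ is a dHYM connection on $L_{\beta_m}$.

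No step is a real obstacle. The only delicate point is the sign bookkeeping with $\epsilon_2=-1$ and $m_2=-(m_1+m_3)$ when reading off the curve signs, which I have rechecked above for $C_1$.
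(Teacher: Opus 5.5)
Your proposal is correct and follows the argument the paper intends: the corollary has no separate proof there and comes from reading the signs off the Kähler--Einstein specialisation \eqref{eq:central charges in KE} of Proposition \ref{prop:sign quotients}, combined with the observation after \eqref{eq:kappa} that every invariant $\kappa$ with constant $\lambda_i$ solves the dHYM equation. Your extra checks are accurate and fill in details the paper leaves implicit: that $Z_{\bbF_2}(-\ii F_m)=-\ii\prod_i(1+\ii\lambda_i)\Vol(\bbF_2)\neq 0$, and the direct recomputation $\lambda_2+\lambda_3=-(m_1+3m_3)/A^2$ for $C_1$.
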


	\begin{figure}[h]
		\centering
		\includegraphics[scale=0.4]{plotdifferentsigns}
		\caption{The $(1,1)$-classes for which at least two of the $\Im \frac{Z_{C_i}(-\ii F_m)}{Z_{\bbF_2} (-\ii F_m)}$ have different signs.}\label{fig:different signs}
	\end{figure}

\begin{cor}
	Let $m_1+m_3=0$, then 
	\[\Im \Biggl( \frac{Z_{C_2}(-\ii F_m)}{Z_{\bbF_2} (-\ii F_m)} \Biggr)=0,\]
	and there is a solution to the dHYM equation in the class $[-\ii F_m]$.
\end{cor}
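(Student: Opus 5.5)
In the Kähler--Einstein setting of this subsection, the plan is to read off the vanishing from the sign formula of Lemma \ref{lem:centralchargequotientcurves}. I will check the imaginary part directly rather than only cite the sign statement \eqref{eq:central charges in KE}, since "sign $0$" should really mean the quantity vanishes.

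\emph{Step 1: the quotient is well defined.} By Equation \eqref{eq:ZX}, $Z_{\bbF_2}(\kappa)$ equals $-\ii\prod_{i=1}^3(1+\ii\lambda_i)\Vol(\bbF_2)$ (up to the conventions of Lemma \ref{lem:central_charges}). Its modulus is $\prod_i\sqrt{1+\lambda_i^2}\,\Vol(\bbF_2)>0$. So $Z_{\bbF_2}(-\ii F_m)\neq 0$ and the quotient makes sense.

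\emph{Step 2: the imaginary part vanishes.} The proof of Lemma \ref{lem:centralchargequotientcurves} shows more than a sign statement. It shows that $\Im\bigl(Z_C(\kappa)/Z_{\bbF_2}(\kappa)\bigr)$ is a positive multiple, namely $\Vol(\bbF_2)^2/|Z_{\bbF_2}|^2$ times a positive factor, of
\[\sumcyclic \mu_i(1+\lambda_i^2)(\lambda_j+\lambda_k).\]
By Lemma \ref{lem:PD curves}, the Poincaré dual of $C_2$ has only $\mu_2=c/(A_3^2A_1^2)\neq0$, with $\mu_1=\mu_3=0$. The expression therefore reduces to $\mu_2(1+\lambda_2^2)(\lambda_3+\lambda_1)$. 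For $-\ii F_m$ we have $\lambda_i=-2m_i/(\epsilon_iA_i^2)$. With $\epsilon_1=\epsilon_3=1$ and $A_1=A_3=A$, this gives $\lambda_1+\lambda_3=-2(m_1+m_3)/A^2=0$. Hence the imaginary part is zero. This is consistent with the second line of Equation \eqref{eq:central charges in KE}.

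\emph{Step 3: existence of a solution.} This follows directly from Theorem \ref{thm:Theta_surjective}, or simply from the observation before it. The form $-\ii F_m=\kappa_m$ is invariant with constant coefficients $\lambda_i$, so $\sum_i\arctan\lambda_i$ is constant and $\kappa_m$ solves the dHYM equation in its own class.

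There is no real obstacle here. The only point needing care is Step 2: the earlier sign proposition silently discarded positive factors, so one must confirm that the discarded factors are genuinely nonzero. This is what Step 1 and $\mu_2>0$ guarantee.
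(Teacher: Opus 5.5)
Your proposal is correct and follows essentially the same route as the paper. The paper reads the vanishing off the $C_2$ line of Proposition \ref{prop:sign quotients}, where the relevant quantity is proportional to $\lambda_1+\lambda_3\propto m_1/A_1^2+m_3/A_3^2$, which is zero once $A_1=A_3$ in the Kähler--Einstein case; existence comes from Theorem \ref{thm:Theta_surjective}.

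Your additional checks are worthwhile. Verifying that the discarded factors $|Z_{\bbF_2}|^{-2}$ and $\mu_2(1+\lambda_2^2)$ are nonzero is a sensible refinement. You are also right to work in the Kähler--Einstein setting, since $A_1=A_3$ is genuinely needed for the vanishing.
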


\subsection{The hypercritical regime}

Let us start by characterising the classes $[\kappa]$ for which $\Im Z_{\bbF_2}(\kappa)>0$.  

\begin{lem}\label{lem:ZX>0}
	Let $(m_1,m_3) \in \Reals^2$ and the corresponding class $\kappa_{m}=-\ii F_m \in \Homology^{1,1}(\bbF_2, \Reals)$. Then, $\Im Z_{\bbF_2}(\kappa_m)>0$ for the K\"ahler--Einstein structure with scaling $A>0$ if and only if
	\[(m_1+m_3)^2+2m_1m_3 > \frac{A^4}{2}.\]
\end{lem}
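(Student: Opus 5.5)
The plan is to read off $\Im Z_{\bbF_2}$ from Lemma \ref{lem:central_charges} and substitute the explicit eigenvalues of $\kappa_m$ in the K\"ahler--Einstein case. By Equation \eqref{eq:ZX},
\[
\Im Z_{\bbF_2}(\kappa)=-\Bigl(1-\sumcyclic \lambda_j\lambda_k\Bigr)\Vol(\bbF_2).
\]
Since $\Vol(\bbF_2)>0$, the condition $\Im Z_{\bbF_2}(\kappa_m)>0$ is equivalent to
\[
\sumcyclic \lambda_j\lambda_k>1 .
\]

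Next, insert the eigenvalues of $\kappa_m=-\ii F_m$. From Equation \eqref{eq:curvature U(1)} these are $\lambda_i=-\frac{2m_i}{\epsilon_iA_i^2}$, with $(\epsilon_1,\epsilon_2,\epsilon_3)=(1,-1,1)$ and $m_2=-(m_1+m_3)$. The K\"ahler--Einstein normalisation of Example \ref{ex:Kahler-Einstein} is $A_1^2=A_3^2=A^2$ and $A_2^2=2A^2$. This gives
\[
\lambda_1=-\frac{2m_1}{A^2},\qquad \lambda_2=-\frac{m_1+m_3}{A^2},\qquad \lambda_3=-\frac{2m_3}{A^2}.
\]
These are consistent with the arctangents appearing in Equation \eqref{eq:lifted theta KE}.

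It remains to compute the pairwise products:
\[
\lambda_1\lambda_2+\lambda_2\lambda_3+\lambda_3\lambda_1=\frac{2m_1(m_1+m_3)+2m_3(m_1+m_3)+4m_1m_3}{A^4}=\frac{2(m_1+m_3)^2+4m_1m_3}{A^4}.
\]
The inequality $\sumcyclic\lambda_j\lambda_k>1$ therefore becomes $2(m_1+m_3)^2+4m_1m_3>A^4$. Dividing by $2$ gives exactly $(m_1+m_3)^2+2m_1m_3>\frac{A^4}{2}$, and every step is reversible.

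There is no real obstacle here. The only care needed is with the sign conventions: the factor $\epsilon_2=-1$ makes $\lambda_2$ carry the same sign pattern as $\lambda_1$ and $\lambda_3$. One should also note that this argument applies equally for $m\in\Reals^2$, since only the formula for $\kappa_m$ is used and no integrality is needed.
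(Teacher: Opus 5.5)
Your proposal is correct and follows essentially the same route as the paper's proof. You use Equation \eqref{eq:ZX} to reduce $\Im Z_{\bbF_2}(\kappa_m)>0$ to $\sumcyclic\lambda_j\lambda_k>1$, then substitute the K\"ahler--Einstein eigenvalues $\lambda_1=-2m_1/A^2$, $\lambda_2=-(m_1+m_3)/A^2$, $\lambda_3=-2m_3/A^2$ to get $2\bigl((m_1+m_3)^2+2m_1m_3\bigr)>A^4$.
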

\begin{proof}
	Using Equation \eqref{eq:ZX}, we find that these classes consists of those for which
	\[1<\sum_{i=1}^3 \lambda_j \lambda_k .\]
	 Now, recall that $\lambda_i = - \frac{2m_i}{\epsilon_i A_i^2}$ for $i=1,2,3$, while $\epsilon_1=1=\epsilon_3$ and $\epsilon_2=-1$. Furthermore, $m_2=-(m_1+m_3)$, and in the K\"ahler--Einstein case $A_1^2=A^2=A_3^2$ and $A_2^2=2A^2$. Thus, the condition becomes
	 \begin{align*}
	 	1&< \frac{2m_1}{A^2} \frac{m_1+m_3}{A^2} + \frac{m_1+m_3}{A^2}\frac{2m_3}{A^2} + \frac{2m_3}{A^2} \frac{2m_1}{A^2}\\
		 &= \frac{2}{A^4}\Bigl(m_1(m_1+m_3)+m_3(m_1+m_3)+2m_1m_3\Bigr)\\
		 &= \frac{2}{A^4}\Bigl((m_1+m_3)^2+2m_1m_3\Bigr).
	 \end{align*}
The result follows.
\end{proof}

A representation of set of pairs $(m_1,m_3)$ for which the inequality $(m_1+m_3)^2+2m_1m_3 > \frac{A^4}{2}$ of Lemma \ref{lem:ZX>0} holds is given in figure \ref{fig:ZX>0} for $A=1$.

\begin{figure}[h]
	\centering
	\includegraphics[scale=0.4]{plotZXpositive}
	\caption{The classes which satisfy $\Im Z_{\bbF_2}>0$ for the K\"ahler--Einstein structure with scaling $A=1$ (horizontal hatching) and $A=2$ (vertical hatching).}\label{fig:ZX>0}
\end{figure}

Now we characterise those classes for which $\Im Z_V >0$ for all irreducible sub-varieties $V$.

\begin{lem}\label{lem:ZV>0}
	Let $(m_1,m_3) \in \Reals^2$ and consider the corresponding class $\kappa_{m} \in \Homology^{1,1}(\bbF_2, \Reals)$. Then, considering the K\"ahler--Einstein structure with scaling $A>0$, we have $\Im Z_V(\kappa_m)>0$ for all irreducible subvarieties $V \subsetneq X$ if and only if
	\[m_1+2m_3 < 0 , \quad 2m_1+m_3<0.\]
\end{lem}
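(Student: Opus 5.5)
The plan is to compute $\Im Z_V(\kappa_m)$ directly for each possible dimension of $V$ using Lemma \ref{lem:central_charges}. Then I would reduce the positivity condition over all hypersurfaces to finitely many extremal classes.

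\emph{Curves.} From the proof of Lemma \ref{lem:central_charges}, $Z_C(\kappa)=-\int_C\kappa+\ii\int_C\omega$. Hence $\Im Z_C(\kappa_m)=\int_C\omega=\Vol(C)>0$ for every irreducible curve $C$, with no condition on $m$. So curves impose nothing, and I would treat points the same way, by the paper's convention that only positive-dimensional proper subvarieties are considered. The whole content of the lemma therefore comes from hypersurfaces.

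\emph{Hypersurfaces: formula.} By \eqref{eq:ZV}, for a hypersurface $\surface$ with $PD[\surface]$ represented by $\sum_i\mu_i\frac{\ii}{2}\alpha_i\wedge\overline{\alpha_i}$,
\[
\Im Z_\surface(\kappa_m)=\Vol(\bbF_2)\sumcyclic \lambda_i(\mu_j+\mu_k).
\]
This expression is linear in $\mu$. Lemma \ref{lem:PD hypersurface} gives $\mu_1,\mu_3\ge 0$ and the closedness relation $\mu_2=(\mu_1A_1^2+\mu_3A_3^2)/A_2^2$, which in the Kähler--Einstein case reads $\mu_2=(\mu_1+\mu_3)/2$. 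Every hypersurface class is therefore a nonnegative combination of the two rays $\mu=(1,\tfrac12,0)$ and $\mu=(0,\tfrac12,1)$. Both rays are realised by irreducible hypersurfaces, namely zero sets of sections of $L_{\beta_{(1,0)}}$ and $L_{\beta_{(0,1)}}$, i.e.\ pullbacks of lines from $\cp^2$ and $\mathrm{Gr}(2,3)$, as discussed before Lemma \ref{lem:PD hypersurface}. Consequently, positivity for all irreducible hypersurfaces is equivalent to positivity on these two generators: necessity because the generators are themselves irreducible, and sufficiency by linearity.

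\emph{Hypersurfaces: evaluation.} Substituting $\lambda_1=-2m_1/A^2$, $\lambda_2=-(m_1+m_3)/A^2$, $\lambda_3=-2m_3/A^2$ (from $\lambda_i=-2m_i/(\epsilon_iA_i^2)$ with $m_2=-(m_1+m_3)$, $\epsilon_2=-1$, $A_2^2=2A^2$):
\begin{align*}
\mu=(1,\tfrac12,0)&:\quad \tfrac12\lambda_1+\lambda_2+\tfrac32\lambda_3=-\frac{2(m_1+2m_3)}{A^2},\\
\mu=(0,\tfrac12,1)&:\quad \tfrac32\lambda_1+\lambda_2+\tfrac12\lambda_3=-\frac{2(2m_1+m_3)}{A^2}.
\end{align*}
Positivity of these two quantities is exactly $m_1+2m_3<0$ and $2m_1+m_3<0$, which is the claim.

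The main obstacle I expect is justifying that the effective cone of hypersurface classes is exactly the cone spanned by these two rays. One inclusion is Lemma \ref{lem:PD hypersurface}. The other is the existence of the irreducible hypersurfaces in the extremal classes, which I would take from the Kodaira argument already given in the paper. The remaining steps are routine substitution.
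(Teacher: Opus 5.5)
Your proposal is correct and is essentially the paper's proof. Curves contribute $\Im Z_C=\int_C\omega>0$ unconditionally. For hypersurfaces, the relation $\mu_2=\tfrac{\mu_1+\mu_3}{2}$ with $\mu_1,\mu_3\ge 0$ from Lemma \ref{lem:PD hypersurface} turns $\Im Z_\surface(\kappa_m)$ into a negative multiple of $(m_1+2m_3)\mu_1+(2m_1+m_3)\mu_3$, and the two inequalities are read off from that.

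What you add is the explicit check that both boundary rays $\mu\propto(1,\tfrac12,0)$ and $\mu\propto(0,\tfrac12,1)$ are realised by irreducible hypersurfaces (pullbacks of lines under $\proj_1,\proj_2$). The paper uses this only implicitly for the ``only if'' direction. Points must simply be excluded by convention rather than ``treated the same way'' as curves, since $\Im Z_P=0$ for a point.
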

\begin{proof}
Suppose first that $V=C$, a curve. Then using the definition \ref{def:Z} of central charge, we see that $\Im Z_C(\kappa_m)=\int_C \omega = \Area(C) >0$.

Suppose now that $V=\surface$, a hypersurface. Recall from Lemma \ref{lem:PD hypersurface} that we must have $\mu_2=\tfrac{A_1^2\mu_1+A_3^2\mu_3}{A_2^2}=\tfrac{\mu_1+\mu_3}2$, and that $\mu_1,\mu_3\geq0$. Using Equation \eqref{eq:ZV}, we see that 
\begin{align*}
	\Im Z_\surface(\kappa_m)&=\Vol(\bbF_2)\sumcyclic \mu_i(\lambda_j+\lambda_k)\\
	 &=\Vol(\bbF_2)\bigl(\mu_1\frac{m_1-m_3}{A^2}+\frac{(\mu_1+\mu_3)}2\frac{(-2m_1-2m_3)}{A^2}+\mu_3\frac{m_3-m_1}{A^2}\bigr)\\
	 &=-\frac{\Vol(\bbF_2)}{A^2}\bigl((m_1+2m_3)\mu_1+(2m_1+m_3)\mu_3\bigr).
\end{align*}
This quantity is positive for all surfaces if and only if the coefficients of $\mu_1$ and $\mu_3$ are negative.	
\end{proof}

The set determined by the two inequalities in Lemma \ref{lem:ZV>0} is represented in figure \ref{fig:ZV>0}.

Now we consider the hypercritical case, determined by the condition $\Theta(\kappa_m)= \sum_{i=1}^3 \arctan(\lambda_i) \in ( \pi ,  \tfrac{3\pi}{2} )$. For the K\"ahler--Einstein metric with scaling $A>0$, the hypercriticality condition becomes
\[  \Theta(\kappa_m)=\arctan \left( \frac{2m_1}{A^2} \right) + \arctan \left( \frac{m_1+m_3}{A^2} \right) + \arctan \left( \frac{2m_3}{A^2} \right) <-\pi . \]
The classes  satisfying this condition are portrayed in figure \ref{fig:hypercritical}.

Given that $\arctan$ takes values in $(-\tfrac{\pi}{2} , \tfrac{\pi}{2})$, we have that
\begin{align*}
	-\pi&>\arctan \left( \frac{2m_1}{A^2} \right) + \arctan \left( \frac{m_1+m_3}{A^2} \right) + \arctan \left( \frac{2m_3}{A^2} \right) \\
	  &>\arctan \left( \frac{2m_1}{A^2} \right) + \arctan \left( \frac{2m_3}{A^2} \right) - \frac{\pi}{2}
\end{align*}
implies
\begin{align*}
	& \arctan \left( \frac{2m_1}{A^2} \right) + \arctan \left( \frac{2m_3}{A^2} \right)   < - \frac{\pi}{2} ,
\end{align*}
which in turn implies that both $m_1$ and $m_3$ must be negative when $\kappa_m$ is hypercritical. In particular, as the set where both $m_1$ and $m_3$ is negative is a subset of the set characterised in Lemma \ref{lem:ZV>0}, we have that if $\kappa_m$ is hypercritical, then $\Im Z_V>0$ for all $V \subsetneq X$. The opposite, which is  \cite[Conjecture 8.5]{Collins-Yau-MomentMaps-arxiv}, is however not true as we shall now see.

\begin{cor}[Counterexample to \citep{Conjecture 8.5}{Collins-Yau-MomentMaps-arxiv}]
	\label{cor:nothypercritical_but_ZV>0}
	Let $(m_1,m_3) \in \bbZ^2$ be such that either $m_1$ or $m_3$ is non-negative, but $m_1+2m_3 < 0$ and $2m_1+m_3<0$. Then, the corresponding class $\kappa_{m}=-F_m \in \Homology^{1,1}(\bbF_2, \bbZ)$ satisfies $\Im Z_V(\kappa_m)>0$ for all $V \subsetneq X$ but it is not hypercritical.
\end{cor}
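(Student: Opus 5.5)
The statement combines two results already in the paper: Lemma \ref{lem:ZV>0} and the discussion of hypercriticality just before it. I will verify the two claims separately for a fixed integer pair $(m_1,m_3)$ satisfying the hypotheses, working throughout with the Kähler--Einstein structure of scaling $A>0$.

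\emph{First claim.} Positivity of $\Im Z_V(\kappa_m)$ for every irreducible proper subvariety $V$ is exactly Lemma \ref{lem:ZV>0}. Its hypotheses $m_1+2m_3<0$ and $2m_1+m_3<0$ are the ones assumed in the corollary, so nothing new is needed. For completeness I would note why the lemma covers every case. For curves, $\Im Z_C=\int_C\omega>0$ regardless of $m$. For hypersurfaces, Lemma \ref{lem:PD hypersurface} lets us write $\mu_2=(\mu_1+\mu_3)/2$ with $\mu_1,\mu_3\ge 0$, not both zero. Then $\Im Z_\surface$ is a positive multiple of $-\bigl((m_1+2m_3)\mu_1+(2m_1+m_3)\mu_3\bigr)$, which is positive under the assumed inequalities.

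\emph{Second claim.} I show that $\kappa_m$ is not hypercritical by contradiction. Suppose $\kappa_m$ were hypercritical. Hypercriticality means $\Theta(\kappa_m)=\sum_i\arctan(\lambda_i)\in(\pi,\tfrac{3\pi}{2})$, where $\lambda_i=-\tfrac{2m_i}{\epsilon_iA_i^2}$. As displayed just before the corollary, in the $m$ coordinates this reads
\[
\arctan\!\left(\tfrac{2m_1}{A^2}\right)+\arctan\!\left(\tfrac{m_1+m_3}{A^2}\right)+\arctan\!\left(\tfrac{2m_3}{A^2}\right)<-\pi .
\]
The middle term is bigger than $-\tfrac\pi2$, so this forces
\[
\arctan\!\left(\tfrac{2m_1}{A^2}\right)+\arctan\!\left(\tfrac{2m_3}{A^2}\right)<-\tfrac{\pi}{2}.
\]
Each term is less than $\tfrac\pi2$, so each must be negative. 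Concretely, if $m_1\ge 0$, the first arctan is $\ge 0$, and the second would have to be below $-\tfrac\pi2$, which is impossible; the same holds with the roles of $m_1$ and $m_3$ exchanged. Hence $m_1<0$ and $m_3<0$, contradicting the hypothesis that one of them is non-negative. This works for every $A>0$.

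\emph{Main obstacle.} There is no real analytic obstacle. The only delicate point is sign bookkeeping: hypercriticality is stated for the angle $\theta\in(\pi,\tfrac{3\pi}{2})$, while in $m$ coordinates it appears as $\Theta<-\pi$ because $\lambda_i$ carries a minus sign. I would check this by recomputing $\sum_i\arctan(\lambda_i)$ with $\epsilon=(1,-1,1)$, $m_2=-(m_1+m_3)$, $A_1^2=A_3^2=A^2$ and $A_2^2=2A^2$, and confirming it equals minus the displayed expression. The convention must match the one used in Lemma \ref{lem:ZV>0}. Under the opposite sign convention, the inequalities there would flip and the argument would have to be mirrored.

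Finally, I would remark that such integer pairs exist, so the corollary is not vacuous. For example, $(m_1,m_3)=(1,-3)$ gives $m_1+2m_3=-5<0$ and $2m_1+m_3=-1<0$, with $m_1\ge0$.
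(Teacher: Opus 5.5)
Your proposal is correct and is essentially the paper's proof: the first claim is Lemma \ref{lem:ZV>0}, and non-hypercriticality follows from the same $\arctan$ estimate given just before the corollary, which forces $m_1<0$ and $m_3<0$. As a side remark, your example $(m_1,m_3)=(1,-3)$ has $\Im Z_{\bbF_2}(\kappa_m)<0$ for every $A>0$ by Lemma \ref{lem:ZX>0}, so it fails to be hypercritical for a trivial reason; a pair such as $(0,-b)$ with $b>0$ and $b^2>A^4/2$ also satisfies $\Im Z_{\bbF_2}(\kappa_m)>0$, and is therefore a more meaningful test of the conjecture.
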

\begin{proof}
	It follows from Lemma \ref{lem:ZV>0} that if $m_1+2m_3 < 0$ and $2m_1+m_3<0$ then $\Im Z_V(\kappa_m)>0$ for all $V \subsetneq X$. However, if either $m_1$ or $m_3$ is non-negative we find, from the discussion preceding the statement, that the class $\kappa_m=-\ii F_m$ is not hypercritical.
\end{proof}

Figure \ref{fig:nothypercritical} gives a graphical representation of the counterexamples to  \cite[Conjecture 8.5]{Collins-Yau-MomentMaps-arxiv}.

		\begin{figure}[htbp]
		\centering
		\begin{subfigure}[t]{0.3\textwidth}
		    \centering
		    \includegraphics[width=\linewidth]{plotZVhypersurface}
			\caption{The positive-sign region $P$, defined by $m_1+2m_3<0$ and $2m_1+m_3<0$, equivalently the region where $\Im Z_V>0$ for all subvarieties $V$.}
			\label{fig:ZV>0}
		\end{subfigure}
		\hfill
		\begin{subfigure}[t]{0.3\textwidth}
		    \centering
		    \includegraphics[width=\linewidth]{plothypercritical}
			\caption{The hypercritical regions $H_1$ and $H_2$ for the K\"ahler--Einstein structure with scaling parameter $A=1$ (horizontal hatching) and $A=2$ (vertical hatching).}\label{fig:hypercritical}
		\end{subfigure}
		\hfill
		\begin{subfigure}[t]{0.3\textwidth}
		    \centering
		    \includegraphics[width=\linewidth]{plotnothypercritical}
	\caption{The classes which satisfy $\Im Z_V(\kappa_m)>0$ for all $V \subsetneq X$ but are not hypercritical. The larger diagonally hatched region is $P\setminus H_2$, while the cross-hatched nested region is $P\setminus H_1$.}\label{fig:nothypercritical}
		\end{subfigure}
\caption{The classes satisfying $\Im Z_V(\kappa_m)>0$ for all $V \subsetneq X$ but not hypercritical. The larger diagonally hatched region is $P\setminus H_2$, while the cross-hatched nested region is $P\setminus H_1$.}
		\label{fig:hypercritical-positive-exclusion}
		\end{figure}

\begin{remark}
	We point out that Chu--Lee has already provided a counterexample to this conjecture in \cite{Chu-Lee-hypercritical-dHYM-revisited}. However, their example in $Bl_p(\cp^2)$ is complex two dimensional while the conjecture had been formulated for the three dimensional case. 
\end{remark}

\section{dHYM equations in rank two}\label{2dHYM}

\subsection{The dHYM equation in higher rank}

Let $E$ be a rank $r$ holomorphic vector bundle over a compact K\"ahler manifold $(X^n,\omega)$. Then, the curvature $F_A$ of a Hermitian connection $A$ compatible with the holomorphic structure is a section of $\Lambda^{1,1}_\bbC \otimes \mathfrak{u}(E) \subset \Lambda^{1,1}_\bbC \otimes \End(E)$. 

Following section 8.1 in \cite{Collins-Yau-MomentMaps-arxiv}, see also \cite{Dervan-Zcritical-Bridgeland}, the dHYM equation in higher rank can be written as the following equation on $\Lambda^{2n}_\bbC \otimes \End(E)$
\begin{equation}\label{eq:dHYM higher rank}
	\Im \left( e^{-\ii\theta}(\omega \otimes \id_E + F_A)^{n} \right)=0,
\end{equation}
for a constant $\theta \in \Reals/\pi \bbZ$, which is again determined topologically by taking the trace of both sides and integrating over $X$.

\subsection{Homogeneous $\U(n)$-bundles and connections}

A homogeneous vector bundle over $\bbF_2$ is a bundle whose total space admits a lift of the $\SU(3)$ action on $\bbF_2$. An example of such a bundle is $SU(3)\rightarrow SU(3)/\bbT^2$. It is know that any homogeneous $\U(n)$-bundle over $\bbF_2=\SU(3)/\bbT^2$ can be described through the associated bundle construction and the use of an isotropy or intertwining homomorphism 
$\bbT^2\to \U(n)$. 
Any such homomorphism can be diagonalized to a 
\[\lambda_{\rho} = \diag( e^{\ii\rho_1} , \ldots ,  e^{\ii \rho_n} )  ,\]
for $\ii\rho_1 , \ldots , \ii\rho_n \in \ii (\ttt^2)^*$ integral weights of $\SU(3)$. 
The associated bundle is 
\[P_{\rho}= \SU(3) \times_{\lambda_{\rho}} \U(n).\]

\begin{remark}
	Let $V_{\rho}=P_{\rho} \times_{\U(n)} \bbC^n$ be the vector bundle associated to $P_\rho$ with respect to the standard representation of $\U(n)$. This bundle splits as a direct sum
	\[V_\rho= L_{\rho_1} \oplus \cdots \oplus L_{\rho_n}. \]
\end{remark}

Recall that  $\Homology^2(\bbF_2,\bbZ)$ is parametrized by $\beta_m=m_1 \beta_3 - m_3 \beta_1$ (introduced in Equation \eqref{eq:betam})
for $m =(m_1,m_3) \in \bbZ_2$ and that the roots of $\SU(3)$ are given by
\begin{equation}\label{eq:rootsbetam}	
\begin{aligned}
	r_1 & = \ii (\beta_2-\beta_3) = \ii (-\beta_1-2\beta_3) =\ii\beta_{(-2,1)},\\
	r_2 & = \ii (\beta_3-\beta_1) =\ii\beta_{(1,1)}, \\
	r_3 & = \ii (\beta_1-\beta_2) = \ii (2 \beta_1 + \beta_3)=\ii\beta_{(1,-2)}.
\end{aligned}
\end{equation}
It is useful to recall the notation introduced in Equations \eqref{eq:DEFmatrices} to label elements of $\mathfrak{u}(n)$.

Using the splitting of $\su(3) = \ttt^2 \oplus \mm$ with $\mm = \mm_1 \oplus \mm_2 \oplus \mm_3$ as before, we find that the canonical invariant connection on $P_\rho$ is given by the left-invariant $1$-form
\[A_\rho = \sum_{i=1}^n \rho_i D_i \in \Omega^1(\SU(3), \mathfrak{u}(n)).\]

Other invariant connections on $P_\rho$ are classified by Wang's theorem, \cite[Corollary 3]{Wang-invariant-connections}.
\begin{thm}[Wang]\label{thm:Wang}
Let $P=K\times_{H,\lambda}G$ be a principal homogeneous $G$-bundle. Let $\fk=\fh\oplus \mm$ with $[\fh,\mm]\subset\mm$.  Then $K$-invariant connections $A$ on $P$ are in one to one correspondence with linear maps $\Lambda\colon \mm \to\fg$ such that $\Lambda\circ \ad(h)=\ad(\lambda(h))\circ \Lambda$ for all $h\in\fh$.
\end{thm}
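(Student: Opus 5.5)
The approach is the standard one: identify connections on $P$ with $K$-invariant, $\fg$-valued $1$-forms on $K$ satisfying the two defining conditions of a connection form, and then use the splitting $\fk=\fh\oplus\mm$ to read such a form off from a map on $\mm$. Since $P=K\times_{H,\lambda}G$ is a quotient of $K\times G$, a connection on $P$ pulls back under $K\to K\times G\to P$, $k\mapsto[k,e]$, to a $\fg$-valued $1$-form $\tilde A$ on $K$. This form satisfies \[\tilde A(X^\#)=\lambda_*(X)\quad (X\in\fh),\qquad R_h^*\tilde A=\Ad(\lambda(h)^{-1})\tilde A\quad (h\in H),\] where $X^\#$ is the vector field on $K$ generated by the right $H$-action. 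Conversely, every form with these two properties descends to a connection on $P$. $K$-invariance of the connection corresponds exactly to left-invariance of $\tilde A$ on $K$.

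The first step is therefore to note that a left-invariant $\tilde A$ is determined by its value $\tilde A_e\colon\fk\to\fg$ at the identity. The verticality condition forces $\tilde A_e|_{\fh}=\lambda_*$. I then set $\Lambda:=\tilde A_e|_{\mm}$, so that $\tilde A_e=\lambda_*\oplus\Lambda$. On a left-invariant form, right translation by $h$ acts at $e$ through $\Ad(h)$ on $\fk$, so the equivariance condition becomes \[\tilde A_e\circ\Ad(h)=\Ad(\lambda(h))\circ\tilde A_e\quad\text{for all } h\in H.\] On $\fh$ this holds automatically, because $\lambda$ is a homomorphism. Since $\Ad(H)$ preserves $\mm$, the condition on $\mm$ reads $\Lambda\circ\Ad(h)=\Ad(\lambda(h))\circ\Lambda$.

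Differentiating this group-level condition gives the infinitesimal condition $\Lambda\circ\ad(X)=\ad(\lambda_*X)\circ\Lambda$ for $X\in\fh$, which is the condition in the statement (there $\ad(\lambda(h))$ should be read as $\ad(\lambda_*h)$). Conversely, given $\Lambda$, I define $\tilde A_e=\lambda_*\oplus\Lambda$, extend it left-invariantly, and check that the two conditions above hold, so that the form descends. This gives the bijection.

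The main obstacle is the step from the infinitesimal condition back to the group-level condition: the infinitesimal condition only implies $\Ad(H)$-equivariance when $H$ is connected. In our application $H=\bbT^2$ is connected, so the two conditions agree; I would state this hypothesis explicitly, or else phrase the correspondence with $\Ad(h)$ in place of $\ad$. A secondary point needing care is the reductivity hypothesis: $\Ad(H)\mm\subset\mm$ follows from $[\fh,\mm]\subset\mm$ when $H$ is connected, and I would use this to justify restricting the equivariance condition to $\mm$. Alternatively, the whole statement can be cited directly from \cite[Corollary 3]{Wang-invariant-connections}.
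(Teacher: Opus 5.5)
Your argument is correct. Note that the paper does not actually prove this statement. It quotes it from \cite[Corollary 3]{Wang-invariant-connections} and \cite[Theorem 3.5]{Oliveira-facts-invariant-connections}, and what you have written is the standard argument behind those references:
\begin{itemize}
\item pull back along $k\mapsto[k,e]$;
\item identify $K$-invariant connections with left-invariant forms $\tilde A$ on $K$ satisfying verticality and $H$-equivariance;
\item read off $\tilde A_e=\lambda_*\oplus\Lambda$, with $\Lambda$ subject to $\Ad(H)$-equivariance.
\end{itemize}

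Your caveat is a genuine correction to the statement as written, not just a matter of care. With $\ad$ in place of $\Ad$, and only $[\fh,\mm]\subset\mm$ assumed, the correspondence fails for disconnected $H$. Take $K=O(2)$, $H=\{1,s\}$ generated by a reflection, $G=\U(1)$ and $\lambda$ trivial. Then $\fh=0$, so the infinitesimal condition is vacuous. But $\Ad(s)=-1$ on $\mm=\mathfrak{so}(2)$ forces $\Lambda=0$, matching the fact that the only reflection-invariant connection on the trivial bundle over $K/H\cong S^1$ is the product one.

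For $H=\bbT^2$, which is connected, the two formulations agree. The paper in fact uses the group-level version when it asks for a morphism of $\bbT^2$-representations in \eqref{eq:Lambda}.
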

See \cite{Wang-invariant-connections} or \cite[Theorem 3.5]{Oliveira-facts-invariant-connections} for more details. 
By applying the above when $K=\SU(3)$, $H=\bbT^2$ and $G=\U(n)$, it follows that an invariant connection on $P_\rho$ is given by a connection 1-form differing from $A_\rho$ by the addition of the left-invariant extension to $\SU(3)$ of
\[0 \oplus \Lambda \colon \ttt^2 \oplus \mm \to \mathfrak{u}(n),\]
for a morphism of $\bbT^2$-representations
\begin{equation}\label{eq:Lambda}
	\Lambda \colon (\mm, \Ad_{\SU(3)}) \to (\mathfrak{u}(n), \Ad_{\U(n)} \circ \lambda_\rho).
\end{equation}
These morphisms are characterised by Schur's lemma and we must therefore decompose the above representations into irreducible components. The left hand side decomposes as the root spaces $\mm_1 \oplus \mm_2 \oplus \mm_3$ and so we decompose the right hand side. Using the notation of Equations \eqref{eq:DEFmatrices}, we have
\[\mathfrak{u}(n) \cong  \Reals^n \oplus \bigoplus_{i<j} \Reals \langle E_{ij}, F_{ij} \rangle ,\]
as $\bbT^2$-representations with the $ \Reals^n$ corresponding to a direct sum of  trivial representations and $\Reals \langle E_{ij}, F_{ij} \rangle \cong \bbC_{\ii (\rho_i - \rho_j)}$. On the other hand, we have $\mm_l \cong \bbC_{r_l}$ for $l=1,2,3$. Hence, Schur's lemma guarantees that the components of $\Lambda$ mapping each $\mm_l$ to $\Reals^n$ must vanish; and the components of $\Lambda$ mapping $\mm_l$ to $\Reals \langle E_{ij}, F_{ij} \rangle$ must vanish if $\ii(\rho_i - \rho_l) \neq \pm  r_l$. 

\begin{example}\label{ex:Tangent_Bundle}
Consider the gauge group $\SU(3)$ with the $\bbT^2$-action we have been considering throughout. This action extends to $\U(3)$. In the notation above this corresponds to having $n=3$ and $\rho_l = \beta_l$ for $l=1,2,3$. The corresponding $\U(3)$-bundle reduces to $\SU(3)$ and it is more convenient to consider instead this $\SU(3)$-subbundle. Let $\tilde{\lambda}_\rho\colon \bbT^2\rightarrow \SU(3)$ be the restriction of the action $\lambda_\rho$, and
\[Q:= \SU(3) \times_{\tilde{\lambda}_\rho} \SU(3),\]
 In fact, the corresponding associated vector bundle $V_\rho$ is the tangent bundle to $\SU(3)/\bbT^2$.
Then, the most general invariant connection of $Q$ can be written as
\[A= \sum_{i=1}^3 \beta_i \otimes T_i + \sumcyclic a_i \Bigl[ \bigl( \cos(b_i) \eta_i + \sin(b_i) \theta_i \bigr) \otimes E_{jk} + \bigl( \cos(b_i) \theta_i - \sin(b_i) \eta_i \bigr) \otimes F_{jk} \Bigr] . \] 
We shall not pursue this example here because computations indicate there are no non-trivial invariant dHYM connections on this bundle. 
\end{example}

We consider $\U(2)$-bundles. Fix $\{\TT_0,\TT_1,\TT_2,\TT_3\}$, a basis of $\mathfrak{u}(2)$ given by 
\[\TT_0= \diag (\ii,\ii), \ \TT_1 = \diag(\ii,-\ii), \ \TT_2=E_{12}, \ \TT_3=F_{12}.\]
We have $[\TT_i,\TT_j] = 2 \TT_k$ for $(i,j,k)$ a cyclic permutation of $(1,2,3)$. 
For any pair $\ii\rho=(\ii\rho_1,\ii\rho_2)$ of integral weights of $\SU(3)$ and representation morphism $\Lambda$ as per Equation \eqref{eq:Lambda}, we have a homogeneous connection on the homogeneous principal $\U(2)$-bundle $P_\rho$ over $\bbF_2$ via the connection 1-form
\begin{align*}
	A & = \rho_1 \otimes \begin{pmatrix} \ii & 0 \\ 0 & 0\end{pmatrix} + \rho_2 \otimes \begin{pmatrix} 0 & 0 \\ 0 & \ii \end{pmatrix} + \Lambda \\
	& = \frac{\rho_1 + \rho_2}{2} \otimes \TT_0 +  \frac{\rho_1 - \rho_2}{2} \otimes \TT_1 + \Lambda ,
\end{align*}
Given that $\mathfrak{u}(2) = \mathfrak{u}(1)^2 \oplus \langle \TT_2, \TT_3 \rangle$ and $\langle \TT_2, \TT_3 \rangle \cong \bbC_{\ii(\rho_1-\rho_2)}$ as representations of $\ttt^2$, the morphism $\Lambda$ can be non-zero if and only if $\ii(\rho_1-\rho_2)= \pm r_i$ for some $i \in \lbrace 1,2,3 \rbrace$. Schur's lemma implies that a nonzero morphisms of (real) representations $\psi_{\pm i} \colon \bbC_{\ii r_i} \to \langle \TT_2, \TT_3 \rangle$ exists if and only if $\ii(\rho_1-\rho_2) = \pm r_i$ in which case it is an isomorphism. Furthermore, any two such isomorphisms differ by composition with a rotation of either the target or the domain, and by multiplication by a constant. Of course as real representations, $\bbC_{\ii r_i}\cong \bbC_{-\ii r_i}$, but our choice of $\psi_{+i}$ and $\psi_{-i}$ fixes the orientation.

Now, recall that a gauge transformation is a section of $\SU(3)\times_{c \circ \lambda} \U(2)$ with $c$ the action of $\U(2)$ on itself by conjugation. Furthermore, an invariant gauge transformation is given by multiplication by a constant $g \in \U(2)$. For such a multiplication to induce a well defined gauge transformation of the bundle, it must commute with the action of the stabiliser, $\bbT^2$ by conjugation. Hence, it must take values in the maximal torus. Such gauge transformations act on $A$ by rotating $\Lambda$. Hence, up to an invariant gauge transformation, any invariant connection can be written as above with
\[\Lambda= \sum_{i=1}^3  a_i  \psi_i + a_{-i}  \psi_{-i} ,\]
for some fixed isomorphisms $\psi_{\pm i}$ and $a_{\pm i} \in \Reals$. Furthermore, recall for the discussion above that $a_{\pm i}$ can only be nonzero if $\ii(\rho_1-\rho_2)= \pm r_i$ and so there can be only at most one nonzero $a_{\pm i}$ for any fixed $\Lambda$.

\begin{remk}
	Let $m=(m_1,m_3), p=(p_1,p_3) \in \bbZ^2$ so that $\rho_1-\rho_2=\beta_m$ and $\rho_1+\rho_2=\beta_p$. From
	\[\rho_1-\rho_2= m_1 \beta_3 - m_3 \beta_1, \ \ \ \rho_1+\rho_2=p_1 \beta_3 - p_3 \beta_1,\]
	we find that
	\begin{align*}
		\rho_1 & = \frac{p_1+m_1}{2} \beta_3 - \frac{p_3+m_3}{2} \beta_1, \\
		\rho_2 & = \frac{p_1-m_1}{2} \beta_3 - \frac{p_3-m_3}{2} \beta_1 .
	\end{align*}
	In particular, for these to be integral weights we must have that $p_i\pm m_i \in 2 \bbZ$ for $i \in \{ 1 , 3\}$. We shall now analyse these condition in the cases in which $i(\rho_1-\rho_2)=i\beta_m= \pm r_j$ for $j \in \{ 1,2,3 \}$.
	
	\begin{itemize}
		\item When $\ii\beta_m=\ii(\rho_1-\rho_2)= \pm r_1=\pm \ii\beta_{(-2,1)}$, we have that $p_1$ is even and $p_3$ is odd.
		
		\item When $\ii\beta_m=\ii(\rho_1-\rho_2)= \pm r_2=\pm\ii\beta_{(1,1)}$,  we have that both $p_1$ and $p_3$ are odd.
		
		\item When $\ii\beta_m=\ii(\rho_1-\rho_2)= \pm r_3=\pm\ii\beta_{(1,-2)}$, we have that $p_1$ odd and $p_3$ even.
	\end{itemize}
\end{remk}

 We summarise our conclusions in the following result.

\begin{lem}\label{lem:gaugedconnection}
	Let $A$ be an invariant connection on the $\U(2)$-bundle $P_\rho$ from above. Let $\psi_{\pm i}\colon \mm_i \to \langle T_2, T_3 \rangle$ be fixed isomorphisms. Then, up to the use of an invariant gauge transformation, there are $a_{\pm i } \in \Reals$ such that 
	\begin{equation}
	A=\frac{\beta_p}{2} \otimes \TT_0 + \frac{\beta_m}{2} \otimes \TT_1 + \sum_{i=1}^3 \left( a_i  \psi_i + a_{-i} \psi_{-i} \right).
	\end{equation}
 Furthermore, $a_{\pm i}$ must vanish if $\beta_m \neq \mp ir_i$, and 
 \begin{itemize}
 	\item if $a_{\pm1}\neq 0$, then $0=a_{\mp1}=a_{+2}=a_{-2}=a_{+3}=a_{-3}$, $m=\pm(-2,1)$ and $p\in\Zeven\times\Zodd$,
	\item if  $a_{\pm2}\neq 0$, then $0=a_{\mp2}=a_{+3}=a_{-3}=a_{+1}=a_{-1}$, $m=\pm(1,1)$ and $p\in\Zodd\times\Zodd$,
	\item if  $a_{\pm3}\neq 0$, then $0=a_{\mp3}=a_{+1}=a_{-1}=a_{+2}=a_{-2}$,  $m=\pm(1,-2)$ and $p\in\Zodd\times\Zeven$.
 \end{itemize}
\end{lem}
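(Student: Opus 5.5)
The plan is to assemble the lemma from Wang's theorem (Theorem \ref{thm:Wang}), Schur's lemma, and the integrality bookkeeping in the preceding remark.

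\emph{Canonical part.} By Wang's theorem, any invariant connection equals $A_\rho+\Lambda$, where $\Lambda$ is a $\bbT^2$-equivariant map as in \eqref{eq:Lambda}. I first rewrite the canonical part in the basis $\{\TT_0,\TT_1,\TT_2,\TT_3\}$:
\[A_\rho=\rho_1 D_1+\rho_2 D_2=\tfrac{\rho_1+\rho_2}{2}\otimes\TT_0+\tfrac{\rho_1-\rho_2}{2}\otimes\TT_1.\]
Setting $\rho_1+\rho_2=\beta_p$ and $\rho_1-\rho_2=\beta_m$ gives the first two terms of the stated formula.

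\emph{Equivariant part.} I decompose $\mathfrak u(2)=\Reals\TT_0\oplus\Reals\TT_1\oplus\langle\TT_2,\TT_3\rangle$ as $\bbT^2$-modules. The first two summands are trivial, and $\langle\TT_2,\TT_3\rangle\cong\bbC_{\ii(\rho_1-\rho_2)}$. The source is $\mm=\mm_1\oplus\mm_2\oplus\mm_3$ with $\mm_l\cong\bbC_{r_l}$, and each $r_l$ is a nonzero weight. Schur's lemma then gives two conclusions:
\begin{itemize}
\item the components of $\Lambda$ landing in the trivial summands vanish;
\item the component $\mm_l\to\langle\TT_2,\TT_3\rangle$ is zero unless $\ii\beta_m=\pm r_l$, in which case it is a real multiple of the fixed isomorphism $\psi_{\pm l}$ composed with a rotation.
\end{itemize}
The sign $\pm$ records whether the map is complex linear or antilinear.

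\emph{Removing the rotation.} This is the step needing the most care. Constant gauge transformations $g$ commuting with $\lambda_\rho(\bbT^2)$ are allowed, and these include the maximal torus of $\U(2)$. Conjugating by $\diag(e^{\ii s},e^{-\ii s})$ fixes $\TT_0$ and $\TT_1$, and rotates $\langle\TT_2,\TT_3\rangle$ by the angle $2s$. Choosing $s$ appropriately makes the coefficient real. Only one component can be nonzero, so a single rotation suffices. This yields $\Lambda=\sum_i a_i\psi_i+a_{-i}\psi_{-i}$ with $a_{\pm i}\in\Reals$.

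\emph{Exclusivity and the case list.} The roots $\pm r_1,\pm r_2,\pm r_3$ are six distinct weights, and $\ii\beta_m$ can equal at most one of them. Hence at most one $a_{\pm i}$ is nonzero, and the case $a_{\pm i}\ne0$ forces $\ii\beta_m=\mp r_i$ (sign conventions as in the statement). The root expressions \eqref{eq:rootsbetam} then give $m=\pm(-2,1)$, $\pm(1,1)$ or $\pm(1,-2)$ respectively.

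\emph{Parity of $p$.} The constraints come from the preceding remark. Integrality of $\rho_1,\rho_2$ requires $p_i\equiv m_i \pmod 2$. Reading off the parities of $m$ in each case gives:
\begin{itemize}
\item $p\in\Zeven\times\Zodd$ for $m=\pm(-2,1)$;
\item $p\in\Zodd\times\Zodd$ for $m=\pm(1,1)$;
\item $p\in\Zodd\times\Zeven$ for $m=\pm(1,-2)$.
\end{itemize}
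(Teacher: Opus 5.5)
Your proposal is correct and follows the paper's argument essentially step by step. Both use Wang's theorem to write $A=A_\rho+\Lambda$, apply Schur's lemma to the $\bbT^2$-equivariant map $\Lambda$, use constant torus-valued gauge transformations to rotate the single surviving component to a real multiple of $\psi_{\pm i}$, and finish with the integrality bookkeeping $p_i\equiv m_i \pmod 2$ from the preceding remark.

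There is one small sign slip. In your exclusivity paragraph you write that $a_{\pm i}\neq 0$ forces $\ii\beta_m=\mp r_i$. The statement's convention requires instead $\ii\beta_m=\pm r_i$, i.e.\ $\beta_m=\mp\ii r_i$. This is also what your own earlier Schur bullet says, and what your conclusion $m=\pm(-2,1)$ (via $r_1=\ii\beta_{(-2,1)}$) needs.
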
 

The curvature $F_A$ of such a connection can be computed using the Maurer--Cartan equations and the formula $F_A = dA + \frac{1}{2} [A \wedge A]$. To compute this it is helpful to write 
\[A= \sum_{\mu=0}^3 \mA_\mu \otimes \TT_\mu , \text{ and }F_A= \sum_{\mu=0}^3 F_\mu \otimes \TT_\mu.\]
We have
\begin{align*}
	F_A & = \sum_{\mu=0}^3 d\mA_\mu \otimes \TT_\mu + \frac{1}{2} \left[ \left( \mA_0 \otimes \TT_0 + \sum_{i=1}^3 \mA_i \otimes \TT_i \right) \wedge \left( \mA_0 \otimes \TT_0 + \sum_{i=1}^3 \mA_i \otimes \TT_i \right) \right] \\
	& = \sum_{\mu=0}^3 d\mA_\mu \otimes \TT_\mu + \frac{1}{2}  \sum_{i,j=1}^3 \mA_i \wedge \mA_j \otimes [\TT_i,\TT_j]  \\
	& = d\mA_0 \otimes \TT_0 + \sumcyclic (d\mA_i + 2 \mA_j \wedge \mA_k) \otimes \TT_i.
\end{align*}
 Summarising, we find that
\begin{equation}\label{eq:componentscurvature}
	\begin{aligned}
		F_0 & =d\mA_0 \\
		F_1 & =d\mA_1 + 2 \mA_2 \wedge \mA_3,\\
		F_2 & =d\mA_2 + 2 \mA_3 \wedge \mA_1,\\
		F_3 & =d\mA_3 + 2 \mA_1 \wedge \mA_2.
	\end{aligned}
\end{equation} 

\begin{remk}\label{rem:curvaturedescends}
By construction, $F_A$ descends to $\bbF_2$. But one has to be careful: the $F_i$ do not necessarily descend themselves. One must indeed consider the action on $\mathfrak{u}(2)$ as well. When checking that a regular form $\nu$ descends, we only have to verify that $\cL_X(\nu)=0$ for every left-invariant vector-field $X\in \ttt^2$. For $\nu\in\Omega^k(\SU(3),\mathfrak{u}(2))$ to descend to an element of $\Omega^k(\bbF_2,P_\rho)$, we must have $\cL_X(\nu)+\tfrac{\beta_m(X)}2[\TT_1,\nu]=0$. Since $F_A$ descends, we must then have
\begin{equation}
	\cL_X(F_0)=\cL_X(F_1)=0,\quad \cL_X(F_2)-\beta_m(X)F_3=0, \text{ and }\cL_X(F_3)+\beta_m(X)F_2=0.
\end{equation}
\end{remk}

\begin{remk}\label{rem:Chern_Clases}
	From Chern--Weyl theory we know that 
	\begin{align*}
		c_1(V_\rho) = \frac{i}{2\pi} [\tr (F_A)] = \frac{i}{2\pi} [\tr (F_0\otimes\TT_0)] = - \frac{1}{2\pi} [d(\rho_1+\rho_2)] = - \frac{1}{2\pi} [d\beta_p].
	\end{align*}
	Alternatively, we can write 
	\[V_\rho= L_{\rho_1} \oplus L_{\rho_2},\]
	with $c_1(L_{\rho_i})= - \frac{1}{2\pi} [d\rho_i]$ for $i\in \{1,2\}$. In particular, we find that
	\begin{align*}
		c_1(L_{\rho_1}) + c_1(L_{\rho_2}) & = c_1(V_\rho) = - \frac{1}{2\pi} [d \rho_1 ] - \frac{1}{2\pi} [d \rho_2 ] =  - \frac{1}{2\pi} [d\beta_p] , \\
		c_1(L_{\rho_1}) - c_1(L_{\rho_2}) & = - \frac{1}{2\pi} [d (\rho_1-\rho_2) ] = - \frac{1}{2\pi} [d\beta_m].
	\end{align*}
\end{remk}

To state the next result recall that the slope of a vector bundle $V$ is defined as
\begin{equation}
	\mu(V)=\frac{\deg(V)}{\rank V}= \frac{1}{\rank V} \int_{\bbF_2} c_1(V) \wedge \omega^2.
\end{equation}

\begin{lem}\label{lem:Degrees}
	Let $V_\rho=L_{\rho_1} \oplus L_{\rho_2}$ as before, that is $\rho_1+\rho_2=\beta_p$ and $\rho_1-\rho_2=\beta_m$. Then
	\begin{align*}
		c_1(V_\rho) \wedge \omega^2 & = \frac{1}{2\pi} \frac{4}{A_1^2A_2^2A_3^2} \biggl( \sumcyclic \epsilon_i p_i A_j^2 A_k^2 \biggr) \frac{\omega^3}{3!} \\
		\bigl( c_1(L_{\rho_1}) - c_1(L_{\rho_2}) \bigr) \wedge \omega^2 & = \frac{1}{2\pi} \frac{4}{A_1^2A_2^2A_3^2} \biggl( \sumcyclic \epsilon_i m_i A_j^2 A_k^2 \biggr) \frac{\omega^3}{3!}.
	\end{align*}
	In particular, let $v:= \frac{2}{A_1^2A_2^2A_3^2} \int_X\frac{\omega^3}{3!}>0$ which is a scale invariant quantity. Then,
	\begin{align*}
		\mu(L_{\rho_1})- \mu(V_\rho) & = \frac{v}{2\pi} \sumcyclic \epsilon_i m_i A_j^2 A_k^2 , \\
		\mu(L_{\rho_2})- \mu(V_\rho) & = - \frac{v}{2\pi} \sumcyclic \epsilon_i m_i A_j^2 A_k^2 .
	\end{align*}
\end{lem}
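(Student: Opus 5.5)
The plan is to compute everything from invariant representatives, using Remark \ref{rem:Chern_Clases} and the curvature formula \eqref{eq:curvature U(1)}. Since $c_1(V_\rho)=-\frac{1}{2\pi}[d\beta_p]$, and $d\beta_p$ is exactly the curvature computation of Equation \eqref{eq:curvature U(1)} with $m$ replaced by $p$, the first Chern class is represented by
\[
-\frac{1}{2\pi}d\beta_p=\frac{1}{2\pi}\sum_{i=1}^3\frac{2p_i}{\epsilon_iA_i^2}\,\frac{\ii}{2}\alpha_i\wedge\overline{\alpha_i}.
\]
This is the form $\frac{\ii}{2\pi}F_{p}$ appearing in the proof of Lemma \ref{lemma:positive_bundle}, with $p_2:=-(p_1+p_3)$. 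The analogous statement holds for $c_1(L_{\rho_1})-c_1(L_{\rho_2})=-\frac{1}{2\pi}[d\beta_m]$ with $m$ in place of $p$. I will double-check the sign convention against the proof of Lemma \ref{lemma:positive_bundle}, where $\frac{\ii F_m}{2\pi}$ is written out explicitly.

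Next, I wedge with $\omega^2$. Writing $\omega=\sum_i \frac{\ii}{2}\alpha_i\wedge\overline{\alpha_i}$, we get $\omega^2=2\sumcyclic(\frac{\ii}{2}\alpha_j\wedge\overline{\alpha_j})\wedge(\frac{\ii}{2}\alpha_k\wedge\overline{\alpha_k})$ and $\frac{\omega^3}{3!}=\prod_i\frac{\ii}{2}\alpha_i\wedge\overline{\alpha_i}$. Hence, for $\gamma=\sum_i c_i\frac{\ii}{2}\alpha_i\wedge\overline{\alpha_i}$, we have $\gamma\wedge\omega^2=2(\sum_i c_i)\frac{\omega^3}{3!}$; this is the same computation as in Lemmas \ref{lem:PD hypersurface} and \ref{lem:central_charges}. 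Taking $c_i=\frac{1}{2\pi}\frac{2p_i}{\epsilon_iA_i^2}$ gives
\[
\frac{1}{2\pi}\cdot 4\sum_i\frac{p_i}{\epsilon_iA_i^2}=\frac{1}{2\pi}\frac{4}{A_1^2A_2^2A_3^2}\sumcyclic \epsilon_ip_iA_j^2A_k^2,
\]
using $1/\epsilon_i=\epsilon_i$. The same computation with $m$ yields the second identity.

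For the slopes, the degree of $V_\rho$ is obtained by integrating, so
\[
\mu(V_\rho)=\frac{1}{2}\cdot\frac{v}{\pi}\sumcyclic\epsilon_ip_iA_j^2A_k^2,
\]
where $v=\frac{2}{A_1^2A_2^2A_3^2}\int_X\frac{\omega^3}{3!}$. Since $\deg L_{\rho_1}+\deg L_{\rho_2}=\deg V_\rho$, we get
\[
\mu(L_{\rho_1})-\mu(V_\rho)=\tfrac12\bigl(\deg L_{\rho_1}-\deg L_{\rho_2}\bigr)=\tfrac12\int c_1(L_{\rho_1})-c_1(L_{\rho_2})\wedge\omega^2=\tfrac{v}{2\pi}\sumcyclic\epsilon_im_iA_j^2A_k^2,
\]
and the $L_{\rho_2}$ formula is its negative.

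Finally, scale invariance of $v$ holds because $\int\omega^3/3!$ scales like $A_1^2A_2^2A_3^2$; indeed $\frac{\omega^3}{3!}$ is $A_1^2A_2^2A_3^2$ times a fixed invariant form built from $\theta_i\wedge\eta_i$. The only genuine obstacle is bookkeeping of signs and factors of $2$, namely the orientation sign $\epsilon_i$ in $\alpha_i\wedge\overline{\alpha_i}=\epsilon_iA_i^2 a_i\wedge\overline{a_i}$ and the factor in $\omega^2$. I will anchor these by checking that the result reproduces the positivity of Lemma \ref{lemma:positive_bundle} for ample $L_{\beta_{(1,0)}}$, where $p=(1,0)$ should give a positive degree.
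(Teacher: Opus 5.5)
Your proposal is correct and follows essentially the same route as the paper: you represent the classes by the invariant forms $-\frac{1}{2\pi}d\beta_p=\frac{1}{2\pi}\sum_i\frac{2p_i}{\epsilon_iA_i^2}\,\frac{\ii}{2}\alpha_i\wedge\overline{\alpha_i}$ (and likewise with $m$), use $\gamma\wedge\omega^2=2\bigl(\sum_i c_i\bigr)\frac{\omega^3}{3!}$, and obtain the slope differences as $\frac12\int\bigl(c_1(L_{\rho_1})-c_1(L_{\rho_2})\bigr)\wedge\omega^2$. Your planned sign cross-check is warranted: Equation \eqref{eq:curvature U(1)} as printed has the opposite overall sign, whereas the representative you wrote, taken from the proof of Lemma \ref{lemma:positive_bundle} and equivalent to $d\beta_p=-2\sum_i\frac{p_i}{\epsilon_iA_i^2}\,\frac{\ii}{2}\alpha_i\wedge\overline{\alpha_i}$, is consistent with the Maurer--Cartan equations and with the paper's own proof.
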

\begin{proof}
	Let $\omega_i:=\frac{\ii}{2} \alpha_i \wedge \overline{\alpha_i}$ so that $\omega = \omega_1 + \omega_2 + \omega_3$. Then, using $p_2=-p_1-p_3$, 
	we find that
	\begin{align*}
		\omega^2 \wedge d\beta_p 
		& = -4 \bigl( \sum_{i=1}^3 \frac{p_i}{\epsilon_i A_i^2} \bigr) \frac{\omega^3}{3!} \\
		& =  -\frac{4}{A_1^2A_2^2A_3^2} \bigl( \sumcyclic \epsilon_i p_i A_j^2 A_k^2 \bigr) \frac{\omega^3}{3!} .
	\end{align*}
	The result follows from using the observations in Remark \ref{rem:Chern_Clases}. We now turn to the computation of the slopes. Using the fact that $c_1(V_\rho)= c_1(L_{\rho_1}) + c_1(L_{\rho_2})$, we find that 
	\begin{align*}
		\mu(L_{\rho_1})- \mu(V_\rho) & = \int_{\bbF_2} c_1(L_{\rho_1}) \wedge \omega^2 - \frac{1}{2}\int_{\bbF_2} c_1(V_\rho) \wedge \omega^2 \\
		& = \frac{1}{2} \int_{\bbF_2} \left( c_1(L_{\rho_1}) - c_1(L_{\rho_2}) \right) \wedge \omega^2.
	\end{align*}
	Inserting the previous expression for $\left( c_1(L_{\rho_1}) - c_1(L_{\rho_2}) \right) \wedge \omega^2$ yields the stated result. The case of $\mu(L_{\rho_2})- \mu(V_\rho)$ follows from a similar computation.
\end{proof}

\begin{cor}\label{cor:Cones}
	Let $\rho_1-\rho_2 = \beta_m$ with $\sign(m_1)=-\sign(m_3)$ and consider $V_\rho=L_{\rho_1} \oplus L_{\rho_2}$. Then, there is a non-empty open cone $\mathcal{K}_1$ of K\"ahler classes $[\omega]$ for which $\mu(L_{\rho_1})- \mu(V_\rho)>0> \mu(L_{\rho_2})- \mu(V_\rho)$ and a non-empty open cone $\mathcal{K}_2$ of K\"ahler classes $[\omega]$ for which $\mu(L_{\rho_2})- \mu(V_\rho)>0> \mu(L_{\rho_1})- \mu(V_\rho)$.
\end{cor}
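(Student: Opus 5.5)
By Lemma \ref{lem:Degrees}, the sign of $\mu(L_{\rho_1})-\mu(V_\rho)$ is the sign of
\[
Q(A):=\sumcyclic \epsilon_i m_i A_j^2A_k^2,
\]
and $\mu(L_{\rho_2})-\mu(V_\rho)=-\bigl(\mu(L_{\rho_1})-\mu(V_\rho)\bigr)$. So both claims reduce to showing that $Q$ takes both signs on the set of K\"ahler classes, and that each sign is attained on an open cone. The plan is to parametrize K\"ahler classes by the homogeneous K\"ahler forms $\omega$, i.e.\ by $(A_1,A_2,A_3)$ with $A_1^2-A_2^2+A_3^2=0$ (Lemma \ref{lemma:closed_form} with $\epsilon=(1,-1,1)$). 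Every K\"ahler class has such a representative, since $\SU(3)$-averaging a K\"ahler form yields an invariant K\"ahler form in the same class, and by Lemma \ref{lem:cohomology} these invariant forms are exactly the ones described.

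Substituting $A_2^2=A_1^2+A_3^2$ and $m_2=-(m_1+m_3)$ gives
\[
\begin{aligned}
Q &= m_1A_2^2A_3^2 + (m_1+m_3)A_3^2A_1^2 + m_3A_1^2A_2^2 \\
  &= m_1A_3^2(A_1^2+A_3^2)+(m_1+m_3)A_1^2A_3^2+m_3A_1^2(A_1^2+A_3^2).
\end{aligned}
\]
Set $x=A_1^2>0$ and $y=A_3^2>0$. Then
\[
Q=m_3x^2+2(m_1+m_3)xy+m_1y^2,
\]
which is a binary quadratic form restricted to the open positive quadrant. The condition $\sign(m_1)=-\sign(m_3)$ (so both are nonzero) means the coefficients of $x^2$ and $y^2$ have opposite signs. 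Taking $y/x\to 0$ gives $\sign Q=\sign m_3$, and taking $x/y\to0$ gives $\sign Q=\sign m_1=-\sign m_3$.

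The sets $\{Q>0\}$ and $\{Q<0\}$ in the quadrant are therefore both nonempty. They are open by continuity and invariant under scaling $(x,y)\mapsto(tx,ty)$, since $Q$ is homogeneous of degree $2$; hence they are open cones. Transporting back through the linear isomorphism $(x,y)\mapsto[\omega]$ onto the K\"ahler cone yields $\mathcal{K}_1=\{Q>0\}$ and $\mathcal{K}_2=\{Q<0\}$, and the sign relation between $\mu(L_{\rho_1})-\mu(V_\rho)$ and $\mu(L_{\rho_2})-\mu(V_\rho)$ gives the two stated inequalities.

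The main point to check is that the parametrization of the K\"ahler cone by $(A_1^2,A_3^2)\in\Reals_{>0}^2$ is surjective onto it and that the map is a linear homeomorphism onto an open cone in $\Homology^{1,1}$. This follows from the averaging argument above together with the fact that the invariant representative $\sum\frac{\ii}{2}\alpha_i\wedge\overline{\alpha_i}$ depends linearly on $(A_1^2,A_3^2)$, since $\frac{\ii}{2}\alpha_i\wedge\overline{\alpha_i}=\epsilon_iA_i^2\,\theta_i\wedge\eta_i$ up to a normalization. The positive scalar $v$ in Lemma \ref{lem:Degrees} is irrelevant to all signs.
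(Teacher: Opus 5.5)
Your proof is correct and follows the paper's argument. Like the paper, you use Lemma \ref{lem:Degrees} to reduce everything to the sign of $\sumcyclic \epsilon_i m_i A_j^2A_k^2 = m_3A_1^4 + 2(m_1+m_3)A_1^2A_3^2 + m_1A_3^4$, and then let $A_3^2/A_1^2$ tend to $0$ or $\infty$; your averaging argument (every K\"ahler class has a homogeneous representative) and your homogeneity argument for the cone property are details the paper leaves implicit.
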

\begin{proof}
	The sign of $\mu(L_{\rho_1})- \mu(V_\rho)$ coincides with that of the quantity
	\begin{align*}
		\sumcyclic \epsilon_i m_i A_j^2 A_k^2 & = m_1 A_2^2 A_3^2 - m_2 A_3^2 A_1^2  +  m_3 A_1^2 A_2^2  \\
		& = m_1 A_2^2 A_3^2 + (m_1+m_3) A_3^2 A_1^2  +  m_3 A_1^2 A_2^2 \\
		& = m_1 ( A_2^2 A_3^2 + A_3^2 A_1^2 )  +  m_3 ( A_1^2 A_2^2 + A_3^2 A_1^2) \\
		& = m_1 A_3^2 (A_2^2+A_1^2 ) + m_3 A_1^2 (A_2^2+A_3^2) \\
		& = m_1 A_3^2 (A_3^2+2A_1^2 ) + m_3 A_1^2 (A_1^2+2A_3^2) \\
		& = m_1 (A_3^4 + 2A_1^2A_3^2) + m_3 (A_1^4+2 A_1^2A_3^2) .
	\end{align*}
	The result follows from noticing that if for example $m_1$ is negative, then for $\frac{A_3^4}{A_1^4}$ sufficiently large this quantity is negative. 
\end{proof}

\begin{remark}
The condition of Corollary \ref{cor:Cones} is satisfied for instance if $i\beta_m = \pm r_1$ or $i\beta_m = \pm r_3$.	
\end{remark}

To continue with the computation of the curvature we must  explicitly write $\psi_{\pm i}$. In this way, we can write
\begin{align*}
		A & = \frac{\beta_p}{2} \otimes \TT_0 +  \frac{\beta_m}{2} \otimes \TT_1 \\
		&\quad + \sum_{i=1}^3  a_{+i} \left[  (\cos(b_{+i})\theta_i + \sin(b_{+i}) \eta_i )  \otimes \TT_2 +  (\cos(b_{+i})\eta_i - \sin(b_{+i}) \theta_i ) \otimes \TT_{3} \right]  \\
		&\quad + \sum_{i=1}^3 a_{-i} \left[  (\cos(b_{-i}) \eta_i + \sin(b_{-i}) \theta_i) \otimes \TT_2 + ( \cos(b_{-i}) \theta_i - \sin(b_{-i}) \eta_i ) \otimes \TT_{3} \right.
\end{align*}
By further making use of an invariant gauge transformation, we can reduce to the case in which $b_{\pm i}=0$ (recall that at most one of the $a_{\pm i}$ can be nonzero). Hence, the connection reads 
\begin{align*}
	A & = \frac{\beta_p}{2} \otimes \TT_0 +  \frac{\beta_m}{2} \otimes \TT_1  + \sum_{i=1}^3  a_{+i} \left[ \theta_i  \otimes \TT_2 +  \eta_i\otimes \TT_{3} \right]   + \sum_{i=1}^3 a_{-i} \left[  \eta_i \otimes \TT_2 + \theta_i \otimes \TT_{3} \right] ,
\end{align*}
and so 
\[\mA_0=\frac{\beta_p}{2}, \ \ \mA_1 = \frac{\beta_m}{2} , \ \ \mA_2= \sum_{i=1}^3 ( a_{+i} \theta_i + a_{-i} \eta_i ) , \ \ \mA_3 = \sum_{i=1}^3 (a_{+i} \eta_i + a_{-i} \theta_i )  . \]
For simplicity, we shall assume only the $a_{+i}=a_i$ to be nonzero. However, the remaining case follows from a similar computation.
Using the equation we found for the components of the curvature gives
\begin{align*}
	F_0 & =  - \frac{\ii}{2}\sum_{i=1}^3 \frac{p_i}{\epsilon_i A_i^2} \left(  \alpha_i \wedge \overline{\alpha}_i \right),\\
	F_1 & =  -  \frac{\ii}{2}\sum_{i=1}^3 \frac{m_i + 2a_i^2}{\epsilon_i A_i^2} \left( \alpha_i \wedge \overline{\alpha}_i \right),   \\ 
	F_2 & =  \frac{a_1}{\epsilon_2 \epsilon_3 A_2 A_3} \Re( \overline{\alpha}_2 \wedge \alpha_3 ) - \frac{a_2}{\epsilon_1 \epsilon_3 A_1 A_3} \Re( \alpha_3 \wedge \alpha_1 ) + \frac{a_3}{\epsilon_1 \epsilon_2 A_1 A_2} \Re( \overline{\alpha}_1 \wedge \alpha_2 ), \\ 
	F_3 & =  \frac{a_1}{\epsilon_2 \epsilon_3 A_2 A_3} \Im( \overline{\alpha}_2 \wedge \alpha_3 ) - \frac{a_2}{\epsilon_1 \epsilon_3 A_1 A_3} \Im( \alpha_3 \wedge \alpha_1 ) - \frac{a_3}{\epsilon_1 \epsilon_2 A_1 A_2} \Im( \overline{\alpha}_1 \wedge \alpha_2 ).
\end{align*}
In particular, from these formula we see that $F_A$ is of type $(1,1)$ if and only if $a_2=0$ leading to the following conclusion.

\begin{lem}\label{lem:Hol bundle}
	The connection $A$ induces a holomorphic structure on $V_\rho$  if and only if $a_2=0$.
\end{lem}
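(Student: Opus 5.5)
The criterion I will use is the Newlander--Nirenberg / Koszul--Malgrange one. A unitary connection $A$ on the complex bundle $V_\rho$ over the complex manifold $(\bbF_2,J^i)$ defines a holomorphic structure, with $\bar\partial_A := (d_A)^{0,1}$, if and only if $\bar\partial_A^2 = F_A^{0,2} = 0$. Since $F_A$ is $\mathfrak{u}(2)$-valued and hence skew-Hermitian, $F_A^{2,0} = -\bigl(F_A^{0,2}\bigr)^{*}$. The condition is therefore equivalent to $F_A$ being of type $(1,1)$. So the lemma reduces to showing that $F_A$ is of type $(1,1)$ exactly when $a_2=0$, using the explicit curvature components just computed.

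The plan is to examine each component in turn.

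\textbf{Step 1: the diagonal components.} The components $F_0$ and $F_1$ are combinations of $\alpha_i\wedge\overline{\alpha}_i$. These are of type $(1,1)$ for every choice of parameters, so they impose no condition.

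\textbf{Step 2: the off-diagonal components.} I will combine $F_2$ and $F_3$ into the complex form $F_2 + \ii F_3$. Its $(2,0)$ and $(0,2)$ parts vanish exactly when those of $F_2$ and $F_3$ do, since both are real forms and $\Lambda^{2,0}\oplus\Lambda^{0,2}$ is conjugation invariant. The terms $\overline{\alpha}_2\wedge\alpha_3$ and $\overline{\alpha}_1\wedge\alpha_2$ are of type $(1,1)$, and so are their real and imaginary parts. In the $a_3$ terms the sign in front of $\Im$ is flipped, so that term becomes $\overline{\overline{\alpha}_1\wedge\alpha_2}$, which is still of type $(1,1)$. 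The only problematic term is $\alpha_3\wedge\alpha_1$, which is of type $(2,0)$; its real and imaginary parts are of the form $\Re(\alpha_3\wedge\alpha_1)=\tfrac12(\alpha_3\wedge\alpha_1+\overline{\alpha_3\wedge\alpha_1})$, which has nonzero $(2,0)$ and $(0,2)$ parts. Hence $F_A^{0,2}$ is a nonzero constant multiple of $a_2\,\overline{\alpha}_3\wedge\overline{\alpha}_1$, with coefficient proportional to $a_2/(A_1A_3)\neq0$ times $a_2$. Since $\overline{\alpha}_3\wedge\overline{\alpha}_1$ is nowhere zero on $\SU(3)$, $F_A^{0,2}=0$ if and only if $a_2=0$.

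\textbf{Step 3: descending to $\bbF_2$.} The curvature $F_A$ descends as a $\mathfrak{u}(V_\rho)$-valued form (Remark~\ref{rem:curvaturedescends}), and the projection $p$ is compatible with the type decomposition because $p^*\Lambda^{1,0}$ is spanned by the $\alpha_i$. The type conditions checked upstairs on $\SU(3)$ therefore hold on $\bbF_2$ exactly when they hold upstairs.

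I expect the main obstacle to be the sign bookkeeping in the $F_3$ terms. With $\epsilon=(1,-1,1)$, $\alpha_2$ is a multiple of $\overline{a}_2$, so I must confirm that the mixed terms genuinely pair a holomorphic with an antiholomorphic $\alpha$. I must also confirm that the $a_2$ term is genuinely $\alpha_3\wedge\alpha_1$, and not a mixed term, in this complex structure. I would recheck this directly from the Maurer--Cartan equations \eqref{eq:Maurer_Cartan_summary} together with $\mA_2\wedge\mA_3$, rather than trusting the displayed formula.
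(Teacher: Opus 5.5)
Your proposal is correct and follows the paper's argument. The paper also simply reads off from the explicit curvature components that $F_0$, $F_1$ and the $a_1,a_3$ terms of $F_2,F_3$ are of type $(1,1)$, while the $a_2$ term involves $\Re$ and $\Im$ of the $(2,0)$-form $\alpha_3\wedge\alpha_1$, so $F_A^{0,2}=0$ exactly when $a_2=0$.

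Your added justification of the integrability criterion and of the reduction to the real components $F_2,F_3$ is fine; just drop the duplicated factor of $a_2$ in your stated coefficient. The sign bookkeeping you flag cannot affect the conclusion: whichever sign appears in front of the $\Im(\alpha_3\wedge\alpha_1)$ term, that term is of type $(2,0)+(0,2)$ and nonzero when $a_2\neq0$.
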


As a consequence, we only need to consider the situation in which $\ii(\rho_1-\rho_2) \neq \pm r_2$, and thus  $a_2= 0$ by Schur's lemma.

\begin{remark}\label{rem:Weyl} 
Furthermore, we notice that there is a an element of the Weyl group (reflection on $r_2$) which preserves the complex structure $J$ on $\bbF_2$ and interchanges $r_1$ and $r_3$. Hence, up to the action of the Weyl group there is no loss of generality in also assuming that $a_3 = 0$.

Furthermore, the Weyl group also interchanges $r_1$ with $-r_1$ and so, it is enough to consider the case in which $\ii(\rho_1-\rho_2)= + r_1$.
\end{remark}

In view of Lemma \ref{lem:Hol bundle} and Remark \ref{rem:Weyl}, we shall now restrict ourselves to analysing the case
\[\ii(\rho_1-\rho_2)=  r_1=\ii\beta_{(-2,1)},\]
and we shall write $a_1=a$. Then, it follows from our previous discussion that, up to an invariant gauge transformation, the most general invariant connection can be written as
\begin{equation}\label{eq:invariant connection}
	A  = \frac{\rho_1 + \rho_2}{2} \otimes \TT_0 +  \frac{\rho_1 - \rho_2}{2} \otimes \TT_1  + a \left(  \theta_1 \otimes \TT_2 + \eta_1  \otimes \TT_{3} \right) . 
\end{equation}
Using the notation $p_1+p_2+p_3=0$, such a connection can be written as
\[\mA_0 = \frac{\beta_p}{2}, \ \ \mA_1= \frac{\beta_m}{2} = - \frac{\ii r_1}{2} , \ \ \mA_2 = a \theta_1 , \ \ \mA_3 = a\eta_1 .\]
Inserting this into the equations for the curvature previously found, we obtain the following.

\begin{lem}\label{lem:Curvature r_1}
	Let $V_\rho$ be such that $\ii(\rho_1-\rho_2)= r_1$, $\rho_1+\rho_2=\beta_p$ and $A$ be the invariant connection \eqref{eq:invariant connection}. Then, its curvature can be written as $F_A= \sum_{\mu=0}^3 F_\mu \otimes T_\mu$ with
	\begin{align*}
		F_0 & =  - \frac{\ii}{2}\left(  \frac{p_1}{A_1^2}\alpha_1 \wedge \overline{\alpha}_1  -  \frac{p_2}{A_2^2}  \alpha_2 \wedge \overline{\alpha}_2  +  \frac{p_3}{A_3^2}\alpha_3 \wedge \overline{\alpha}_3\right) ,\\
	F_1 & =  - \frac{2\ii(a^2-1)}{2A_1^2} \alpha_1 \wedge \overline{\alpha}_1 + \frac{\ii}{2A_2^2} \alpha_2 \wedge \overline{\alpha}_2 - \frac{\ii}{2A_3^2} \alpha_3 \wedge \overline{\alpha}_3,    \\ 
	F_2 & = \frac{a}{\epsilon_2 \epsilon_3 A_2 A_3} \Re( \overline{\alpha}_2 \wedge \alpha_3 ), \\ 
	F_3 & =  \frac{a}{\epsilon_2 \epsilon_3 A_2 A_3} \Im ( \overline{\alpha}_2 \wedge \alpha_3 ) .
	\end{align*}
\end{lem}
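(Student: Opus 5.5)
The plan is to specialise the general curvature formulas \eqref{eq:componentscurvature} to the connection \eqref{eq:invariant connection}. The components are
\[
\mA_0=\tfrac{\beta_p}{2},\quad \mA_1=\tfrac{\beta_m}{2}=\tfrac12\beta_{(-2,1)}=\tfrac12(\beta_2-\beta_3),\quad \mA_2=a\theta_1,\quad \mA_3=a\eta_1 .
\]
I will compute $F_0=d\mA_0$, $F_1=d\mA_1+2\mA_2\wedge\mA_3$, $F_2=d\mA_2+2\mA_3\wedge\mA_1$ and $F_3=d\mA_3+2\mA_1\wedge\mA_2$ using the Maurer--Cartan equations \eqref{eq:Maurer_Cartan_eqs}. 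At the end I will express everything in terms of $\alpha_i\wedge\overline\alpha_i=-2\ii\,\epsilon_iA_i^2\,\theta_i\wedge\eta_i$, with $(\epsilon_1,\epsilon_2,\epsilon_3)=(1,-1,1)$, and $\overline\alpha_2\wedge\alpha_3$.

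\textbf{Step 1: $F_0$.} The formula for $F_0$ is exactly the one in the general computation preceding Lemma \ref{lem:Hol bundle}, equivalently \eqref{eq:curvature U(1)} with $m$ replaced by $p$ and halved. Writing out the $\epsilon_i$ gives the stated expression.

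\textbf{Step 2: $F_1$.} The first term is $d\mA_1=\tfrac12(d\beta_2-d\beta_3)=\theta_3\wedge\eta_3-2\theta_1\wedge\eta_1+\theta_2\wedge\eta_2$. The second term is $2\mA_2\wedge\mA_3=2a^2\theta_1\wedge\eta_1$. So the $\theta_1\wedge\eta_1$ coefficient is $2(a^2-1)$. Converting each $\theta_i\wedge\eta_i$ to $\tfrac{\ii}{2\epsilon_iA_i^2}\alpha_i\wedge\overline\alpha_i$ produces the three terms. The signs for $i=2$ and $i=3$ come from $\epsilon_2=-1$ and $\epsilon_3=1$. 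The normalisation convention $\tfrac{-2\ii(a^2-1)}{2A_1^2}$ will be checked against this conversion.

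\textbf{Step 3: $F_2$ and $F_3$.} Using $d\theta_1=(\beta_2-\beta_3)\wedge\eta_1+\theta_2\wedge\theta_3-\eta_2\wedge\eta_3$, the $\beta$-terms of $d\mA_2=a\,d\theta_1$ are $a(\beta_2-\beta_3)\wedge\eta_1$. The second term is $2\mA_3\wedge\mA_1=a\,\eta_1\wedge(\beta_2-\beta_3)$. These two cancel exactly; this is the expected consequence of invariance (Remark \ref{rem:curvaturedescends}). What remains is
\[
F_2=a(\theta_2\wedge\theta_3-\eta_2\wedge\eta_3).
\]
Similarly, in $F_3=a\,d\eta_1+2\mA_1\wedge\mA_2$ the $\beta$-terms cancel, leaving $F_3=-a(\theta_2\wedge\eta_3+\eta_2\wedge\theta_3)$.

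Finally, I identify these with $\Re$ and $\Im$ of $\overline\alpha_2\wedge\alpha_3$. We have $\alpha_2=A_2(\eta_2-\ii\theta_2)$, so $\overline\alpha_2=A_2(\eta_2+\ii\theta_2)$, and $\alpha_3=A_3(\eta_3+\ii\theta_3)$. Then
\[
\overline\alpha_2\wedge\alpha_3=A_2A_3\bigl[(\eta_2\wedge\eta_3-\theta_2\wedge\theta_3)+\ii(\eta_2\wedge\theta_3+\theta_2\wedge\eta_3)\bigr].
\]
Since $\epsilon_2\epsilon_3=-1$, the prefactor $\tfrac{a}{\epsilon_2\epsilon_3A_2A_3}$ supplies exactly the needed minus sign, which matches both formulas.

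\textbf{Main obstacle.} The only delicate point is sign and normalisation bookkeeping: the orientation of $\alpha_2$ (since $\epsilon_2=-1$), and the factor conventions relating $\theta_i\wedge\eta_i$ to $\tfrac{\ii}{2}\alpha_i\wedge\overline\alpha_i$. I will cross-check against the general formulas stated before Lemma \ref{lem:Hol bundle}, specialised to $a_2=a_3=0$ and $m=(-2,1)$. There, $m_i+2a_i^2$ gives $-2+2a^2$, $1$ and $-1$ for $i=1,2,3$, in agreement with Step 2.
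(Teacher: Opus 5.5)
Your route is the paper's own: substitute $\mA_0=\tfrac12\beta_p$, $\mA_1=\tfrac12(\beta_2-\beta_3)$, $\mA_2=a\theta_1$, $\mA_3=a\eta_1$ into \eqref{eq:componentscurvature} and use \eqref{eq:Maurer_Cartan_eqs}. Your computations in the $\theta,\eta$ coframe are right. You correctly get $F_0=\sum_i p_i\,\theta_i\wedge\eta_i$ and $F_1=2(a^2-1)\theta_1\wedge\eta_1+\theta_2\wedge\eta_2+\theta_3\wedge\eta_3$. The $\beta$-terms do cancel in $F_2,F_3$. Your Step 3 identification with $\Re$ and $\Im$ of $\overline\alpha_2\wedge\alpha_3$ is also correct.

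What fails is the conversion you use in Steps 1--2. From $\alpha_i=A_i(\eta_i+\ii\epsilon_i\theta_i)$,
\[
\alpha_i\wedge\overline\alpha_i=A_i^2(\eta_i+\ii\epsilon_i\theta_i)\wedge(\eta_i-\ii\epsilon_i\theta_i)=2\ii\,\epsilon_iA_i^2\,\theta_i\wedge\eta_i ,
\]
so $\theta_i\wedge\eta_i=-\tfrac{\ii}{2\epsilon_iA_i^2}\,\alpha_i\wedge\overline\alpha_i$. This is the opposite of the sign you use, and with your sign every term flips. For instance, $\theta_2\wedge\eta_2$ would become $-\tfrac{\ii}{2A_2^2}\alpha_2\wedge\overline\alpha_2$, and $2(a^2-1)\theta_1\wedge\eta_1$ would become $+\tfrac{2\ii(a^2-1)}{2A_1^2}\alpha_1\wedge\overline\alpha_1$. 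So, as written, you would obtain $-F_0$ and $-F_1$ of the statement, not the statement. Your Step 3 already uses the literal definitions of $\alpha_2,\alpha_3$; doing the same in Steps 1--2 gives exactly the stated formulas. Your sign is what the relation $\alpha_i\wedge\overline\alpha_i=\epsilon_iA_i^2\gamma_i$ in the proof of Lemma \ref{lem:cohomology} yields. That relation, however, is off by a sign relative to \eqref{eq:Complex_Structures_Flag}, and the lemma you are proving uses the correct one.

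Your two cross-checks also need repair.
\begin{itemize}
\item \eqref{eq:curvature U(1)} is the curvature $\ii\,d\beta_m$ of $\ii\beta_m$. Matching $F_0=\tfrac12 d\beta_p$ therefore needs an extra factor $-\ii$ besides halving. That display also carries the same sign slip, so it cannot confirm your sign.
\item For $m=(-2,1)$ one has $(m_1,m_2,m_3)=(-2,1,1)$. So $m_i+2a_i^2$ equals $-2+2a^2,1,1$ --- precisely the $\theta_i\wedge\eta_i$ coefficients you found --- not $1$ and $-1$.
\end{itemize}
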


\subsection{The dHYM equation for invariant connections}

Now we turn to the dHYM equation, which is given by
\begin{equation}
	\Im \left(e^{-\ii\theta} (\omega \otimes \id_{\mathrm{U}(2)} + F_A)^3 \right) =  0.
\end{equation}
To work out this equation it is convenient to define
\begin{equation}
	\mE:=  (\omega \otimes \id_{\mathrm{U}(2)} + F_A)^3 ,
\end{equation}
which one can write as
$$\mE=E_0 \otimes \id_{\U(2)} + E_1 \otimes \TT_1 + E_2 \otimes \TT_2 + E_3 \otimes \TT_3,$$
for some $E_\mu \in \Omega^1(\SU(3), \bbC)$.

\begin{lem}\label{lem:E}
	One can write the quantities $E_\mu \in \Omega^1(\SU(3), \bbC)$ above as
	\begin{align*}
		E_0 & = \left(\omega^3 - 3 \sum_{\mu=0}^3 \omega \wedge F_\mu^2 \right)  + \ii \left( 3\omega^2 \wedge F_0 - F_0^3 - 3 \sum_{i=1}^3 F_i^2 \wedge  F_0  \right)  \\
		E_i & =  3 \omega^2 \wedge F_i -  3 F_0^2 \wedge F_i -  F_i \wedge  \sum_{j=1}^3 F_j^2 + \ii \ 6 \omega \wedge F_0 \wedge F_i   \\
		& = F_i \wedge \left( 3 \omega^2-  3 F_0^2  -  \sum_{j=1}^3 F_j^2 + \ii \ 6 \omega \wedge F_0   \right) ,
	\end{align*}
	for $i \in \{1,2,3\}$. In particular, if $E_2=0=E_3$, the connection $A$ is dHYM if and only if 
	\begin{equation}\label{eq:argument}
		\Im (E_0) \Im(E_1) = - \Re(E_0)\Re(E_1).
	\end{equation}
\end{lem}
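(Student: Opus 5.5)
The plan is to expand the cube $(\omega\otimes\id + F_A)^3$ using the decomposition $F_A = F_0\otimes\TT_0 + \sum_{i=1}^3 F_i\otimes\TT_i$, where $\TT_0 = \ii\,\id$. Since all forms involved are even, they commute under the wedge product, so only the algebra structure of the coefficients matters. Write $X := \omega + \ii F_0$; this is the scalar part, a multiple of $\id$. Write $Y := \sum_i F_i\otimes\TT_i$. The matrices $\TT_1,\TT_2,\TT_3$ behave like quaternion units up to scaling: $\TT_i^2 = -\id$, and for $i\ne j$ we have $\TT_i\TT_j = -\TT_j\TT_i = \TT_k$ for $(i,j,k)$ cyclic. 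This is consistent with $[\TT_i,\TT_j] = 2\TT_k$. Since the 2-forms $F_i$ commute, the anticommuting cross terms cancel, and we get $Y^2 = -\bigl(\sum_j F_j^2\bigr)\id$. Then $Y^3 = -\bigl(\sum_j F_j^2\bigr) Y$.

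The binomial expansion is valid because $X$ is scalar and commutes with $Y$:
\[(X+Y)^3 = X^3 + 3X^2 Y + 3 X Y^2 + Y^3.\]
Collecting terms gives
\[E_0\,\id = X^3 - 3X\textstyle\sum_j F_j^2\]
and
\[E_i = F_i\wedge\bigl(3X^2 - \textstyle\sum_j F_j^2\bigr).\]
Next we expand $X^3 = \omega^3 + 3\ii\omega^2 F_0 - 3\omega F_0^2 - \ii F_0^3$ and $X^2 = \omega^2 - F_0^2 + 2\ii\,\omega F_0$, then split into real and imaginary parts. This gives exactly the stated formulas. For $E_0$, the real part is $\omega^3 - 3\omega\sum_{\mu=0}^3 F_\mu^2$, with the $F_0^2$ term absorbed into the sum over $\mu$. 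The imaginary part is $3\omega^2F_0 - F_0^3 - 3F_0\sum_i F_i^2$. For $E_i$, we get $F_i\wedge(3\omega^2 - 3F_0^2 - \sum_j F_j^2 + 6\ii\,\omega F_0)$. Here the $F_\mu$ are real forms. Strictly speaking, in the paper's convention where $F_A$ is $\mathfrak u(2)$-valued, the coefficients $F_\mu$ are real 2-forms when the $\TT_\mu$ are regarded as the basis. The only care needed is in the sign bookkeeping of $\TT_0 = \ii\,\id$.

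For the final claim, suppose $E_2 = E_3 = 0$. Then $\mE = E_0\,\id + E_1\TT_1$ with $\TT_1 = \diag(\ii,-\ii)$. So $\mE$ is diagonal, with entries $E_0 + \ii E_1$ and $E_0 - \ii E_1$, both top-degree complex forms. Since everything is invariant, these are constant multiples of $\omega^3$, and we may treat them as complex numbers $z_\pm := E_0 \pm \ii E_1$. The dHYM equation asks for a single $\theta$ with $\Im(e^{-\ii\theta}z_+) = \Im(e^{-\ii\theta}z_-) = 0$, which means $z_+$ and $z_-$ are real-collinear. Equivalently, $\Im(z_+\overline{z_-}) = 0$. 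We compute
\[z_+\overline{z_-} = (E_0 + \ii E_1)(\overline{E_0} + \ii\overline{E_1}).\]
Its imaginary part is
\[\Re(E_0)\Re(E_1) - \Im(E_0)\Im(E_1) + \Re(E_0)\Re(E_1) - \Im(E_0)\Im(E_1).\]
More carefully, $\Im\bigl(\ii E_0\overline{E_1} + \ii E_1\overline{E_0}\bigr) = 2\Re(E_0\overline{E_1})$, and $\Im(|E_0|^2 - E_1\overline{E_1}) = 0$. So the condition becomes $\Re(E_0\overline{E_1}) = 0$, that is, $\Re E_0\Re E_1 + \Im E_0\Im E_1 = 0$. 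This is \eqref{eq:argument}.

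The main obstacle is the quaternion-type identity $Y^2 = -\sum_j F_j^2\,\id$, together with consistent normalisation of the $\TT_\mu$ and the realness of the $F_\mu$. I would verify the identity first using explicit $2\times2$ matrices. I would also handle the degenerate case where one of $z_\pm$ vanishes, in which the collinearity condition holds trivially.
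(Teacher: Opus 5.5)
Your proposal is correct and follows essentially the same route as the paper: a binomial expansion of $(\omega\otimes\id+F_A)^3$ that uses $\TT_\mu^2=-\id$ and the anticommutation of $\TT_1,\TT_2,\TT_3$, followed, when $E_2=E_3=0$, by reading off the diagonal entries $E_0\pm\ii E_1$ of $\mE$. Your grouping of the central part $X=\omega+\ii F_0$ only streamlines the paper's separate computation of $F_A^2$ and $F_A^3$, and your criterion $\Im(z_+\overline{z_-})=0$ is equivalent to the paper's condition $\arg E_0=\arg(\ii E_1)\bmod\pi$. One small slip: your first displayed expression for $\Im(z_+\overline{z_-})$ has the wrong sign on the $\Im(E_0)\Im(E_1)$ terms, but your corrected identity $\Im(z_+\overline{z_-})=2\Re(E_0\overline{E_1})$ is right and gives \eqref{eq:argument}.
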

\begin{proof}
Expanding the cube gives
\begin{equation}
	\mE = \omega^3 \otimes \id_{\mathrm{U}(2)}  + 3 \omega^2 \wedge F_A +  3\omega \wedge F_A^2 + F_A^3 ,
\end{equation}
and we shall now compute each of these terms. The second of these is simply given by
\begin{equation*}
		3\omega^2 \wedge F_A  = 3 \sum_{\mu=0}^3 (\omega^2 \wedge F_\mu) \otimes \TT_\mu .
\end{equation*}

On the other hand, using the facts that $\TT_0=i \ \id_{\U(2)}$, $\TT_i \TT_j = - \TT_j \TT_i$ for $i \neq j$, and $\TT_\mu^2 = - \id_{\U(2)}$, $\mu=0,1,2,3$, we obtain
\begin{align*}
	F_A^2 & = \biggl(F_0 \otimes \TT_0 + \sum_{i=1}^3 F_i \otimes \TT_i \biggr)^2 \\
	& = - F_0^2 \otimes \id_{\U(2)}   + 2 \ii \sum_{i=1}^3 (  F_0 \wedge F_i ) \otimes \TT_i - \sum_{i=1}^3 F_i^2 \otimes \id_{\U(2)} \\
	& = - \sum_{\mu=0}^3 F_\mu^2 \otimes \id_{\U(2)} + \ii \sum_{i=1}^3 2  F_0 \wedge F_i  \otimes \TT_i,
\end{align*}
and therefore
\begin{align*}
	F_A^3 & = \biggl( - \sum_{\mu=0}^3 F_\mu^2 \otimes \id_{\U(2)} + \ii \sum_{i=1}^3 2  F_0 \wedge F_i  \otimes \TT_i \biggr) \wedge \biggl( F_0 \otimes \TT_0 + \sum_{j=1}^3 F_j \otimes \TT_j \biggr) \\
	& = - \ii  F_0 \wedge \sum_{\mu=0}^3 F_\mu^2 \otimes \id_{\U(2)} - \sum_{i=1}^3 F_i \wedge  \sum_{\mu=0}^3 F_\mu^2  \otimes \TT_i -  \sum_{i=1}^3 2 F_0^2 \wedge F_i  \otimes \TT_i \\
	& \ \ \ - \ii \sum_{i=1}^3 2 F_0 \wedge F_i^2 \otimes \id_{\U(2)} \\
	& =  - \ii \left[ F_0 \wedge \sum_{\mu=0}^3 F_\mu^2 + \sum_{i=1}^2 2 F_i^2 \wedge  F_0  \right]  \otimes \id_{\U(2)}  - \sum_{i=1}^3 \left[ F_i \wedge  \sum_{\mu=0}^3 F_\mu^2  + 2 F_0^2 \wedge F_i  \right]\otimes \TT_i ,
\end{align*}
which we can also write as
\begin{equation*}
	F_A^3 = - \ii \left[ F_0^3 + 3 \sum_{i=1}^2 F_i^2 \wedge  F_0  \right]  \otimes \id_{\U(2)}  - \sum_{i=1}^3 \left[ 3 F_0^2 \wedge F_i +  F_i \wedge  \sum_{j=1}^3 F_j^2  \right]\otimes \TT_i. 
\end{equation*}
Furthermore, from the previous computation of $F_A^2$ we also find that
\begin{equation*}
	3\omega \wedge F_A^2  = - 3 \sum_{\mu=0}^3 (\omega \wedge F_\mu^2) \otimes \id_{\U(2)} + 6\ii \sum_{i=1}^3 (\omega \wedge F_0 \wedge F_i) \otimes \TT_i  
\end{equation*}
Putting all these formulas together we obtain
\begin{align*}
	E_0 & = \left(\omega^3 - 3 \sum_{\mu=0}^3 \omega \wedge F_\mu^2 \right)  + \ii \left( 3\omega^2 \wedge F_0 - F_0^3 - 3 \sum_{i=1}^2 F_i^2 \wedge  F_0  \right)  \\
	E_i & =  3 \omega^2 \wedge F_i -  3 F_0^2 \wedge F_i -  F_i \wedge  \sum_{j=1}^3 F_j^2 + 6\ii \omega \wedge F_0 \wedge F_i   \\
	& = F_i \wedge \left( 3 \omega^2-  3 F_0^2  -  \sum_{j=1}^3 F_j^2 +  6\ii \omega \wedge F_0   \right) ,
\end{align*}
which are the formulas in the statement.

Now, we turn to the case in which $E_2=0=E_3$. In that case, we have
\[\mE= E_0 \otimes \id_{\U(2)} + E_1 \otimes \TT_1 = \begin{pmatrix}
	E_0+\ii E_1 & 0 \\
	0 & E_0 - \ii E_1
\end{pmatrix}.\]
Thus, in this case, $A$ is dHYM if and only if $\Im\bigl(e^{-\ii\theta}(E_0\pm \ii E_1)\bigr)=0$, and thus if and only if
$\Im(e^{-\ii\theta}E_0)=\Im(e^{-\ii\theta}\ii E_1)=0$, and thus if and only if
\[\arg (E_0)= \arg (\ii E_1) \mod \pi\]
which can equally be written as	$\Im (E_0) \Im(E_1) = - \Re(E_0)\Re(E_1)$ as stated.
\end{proof}

\subsection{Solving the equations}
We continue to operate under the asumption, as per Remark \ref{rem:Weyl}, that $\ii(\rho_1-\rho_2)=r_1=\beta_{(2,-1)}$. We then know that $m=(m_1,m_3)=(-2,1)$ and thus $m_2=1$, and that $\rho_1+\rho_2=\beta_p$. We can rewrite the result of Lemma \ref{lem:Curvature r_1} succintly by introducing additional notation:
	\begin{align*}
		\omega_j & = \frac{\ii}{2} \alpha_j \wedge \overline{\alpha}_j, \ \text{for $j=1,2,3$}, \\
		R_{23} & = \frac{1}{\epsilon_2 \epsilon_3 A_2 A_3} \Re( \overline{\alpha}_2 \wedge \alpha_3 ), \\
		I_{23} & = \frac{1}{\epsilon_2 \epsilon_3 A_2 A_3} \Im ( \overline{\alpha}_2 \wedge \alpha_3 ).
	\end{align*}
We then have
\begin{align*}
	F_0 & =  \frac{d\beta_p}{2}, 
	&F_1 & =   \frac{d\beta_m }{2} - \frac{2a^2}{\epsilon_1 A_1^2} \omega_1,  \\ 
	F_2 & =  a R_{23}, 
	&F_3 & =  a I_{23}.
\end{align*}

	Using this notation we have
	\[\frac{d\beta_p}{2}  = -  \frac{p_1}{\epsilon_1 A_1^2} \omega_1  -  \frac{p_2}{\epsilon_2 A_2^2} \omega_2  -  \frac{p_3}{\epsilon_3 A_3^2} \omega_3,\]
	and $\omega^3 = 3! \ \omega_1 \wedge \omega_2 \wedge \omega_3$. We shall now state some useful equations regarding $R_{23}$, and $I_{23}$. These satisfy 
	\[R_{23} \wedge I_{23}=0,\]
	and
\begin{align*}
		R_{23} \wedge \omega^2 = & 0  = I_{23} \wedge \omega^2, \\
		R_{23} \wedge d \beta_p \wedge \omega = & 0  = I_{23} \wedge d \beta_p \wedge \omega, \\
		R_{23} \wedge (d \beta_p)^2 = & 0  = I_{23} \wedge (d \beta_p)^2 .
\end{align*}
Also, from the fact that
$F_1^2 = \left(\frac{d\beta_m}{2}\right)^2 - 4 a^2 \frac{\omega_1}{A_1^2} \wedge \frac{d \beta_m}{2},$
we further compute that
\[R_{23} \wedge F_1^2=0=I_{23} \wedge F_1^2.\]
Using these identities and Lemma \ref{lem:E}, it is a straightforward computation to check that $E_2=0=E_3$. We then know from Lemma \ref{lem:E} that $A$ is dHYM if and only if Equation \eqref{eq:argument} is satisfied. We thus need to compute $E_0$ and $E_1$. In order to compute them we derive the following identities.
\begin{lem}\label{lem:componentsE0E1}
	We continue under the assumption that $\ii(\rho_1-\rho_2)=r_1$, as previously stated.
	The terms which appear in $\Re(E_0)$ are given by
	\begin{align*}
		\omega \wedge F_0^2 & = 2 \frac{\sumcyclic \epsilon_i A_i^2 p_jp_k}{\epsilon_1 A_1^2 \epsilon_2 A_2^2 \epsilon_3 A_3^2} \ \frac{\omega^3}{3!}, \\
		\omega \wedge F_1^2 & = \frac{2}{\epsilon_1 A_1^2 \epsilon_2 A_2^2 \epsilon_3 A_3^2} \biggl( \sumcyclic \epsilon_i A_i^2 m_j m_k + 2a^2 ( m_2\epsilon_3 A_3^2 + m_3\epsilon_2 A_2^2 ) \biggr)  \frac{\omega^3}{3!}, \\
		\omega \wedge F_2^2 = \omega \wedge F_3^2 & = 2 \frac{\epsilon_1 A_1^2 a^2}{\epsilon_1 A_1^2 \epsilon_2 A_2^2 \epsilon_3 A_3^2} \frac{\omega^3}{3!} ,
	\end{align*}
	while those appearing in $\Im(E_0)$ are
	\begin{align*}
		\omega^2 \wedge F_0 & = - \frac{2}{\epsilon_1 A_1^2 \epsilon_2 A_2^2 \epsilon_3 A_3^2} \biggl( \sumcyclic p_i \epsilon_j A_j^2 \epsilon_k A_k^2 \biggr) \frac{\omega^3}{3!}, \\
		F_0^3 & = - 6 \prod_{i=1}^3\frac{p_i}{\epsilon_i A_i^2} \ \frac{\omega^3}{3!}, \\
		F_1^2 \wedge F_0 & = - 2\frac{\sumcyclic p_i m_j m_k + 2a^2  (p_2m_3+p_3m_2) }{\epsilon_1 A_2^2 \epsilon_2 A_2^2 \epsilon_3 A_3^2} \  \frac{\omega^3}{3!}, \\
		F_2^2 \wedge F_0 = F_3^2 \wedge F_0 & = -a^2 \frac{2p_1}{\epsilon_1 A_1^2 \epsilon_2 A_2^2 \epsilon_3 A_3^2} \frac{\omega^3}{3!} .
	\end{align*}
	
	On the other hand, the terms appearing in $\Re(E_1)$ are
	\begin{align*}
		\omega^2 \wedge F_1 & = - \frac{2}{\epsilon_1 A_1^2 \epsilon_2 A_2^2 \epsilon_3 A_3^2} \biggl( \sumcyclic m_i \epsilon_j A_j^2 \epsilon_k A_k^2 \ + 2a^2 \epsilon_2 A_2^2 \epsilon_3 A_3^2  \biggr) \frac{\omega^3}{3!}, \\
		F_0^2 \wedge F_1 & = - 2\frac{\sumcyclic m_i p_j p_k + 2a^2  p_2p_3 }{\epsilon_1 A_2^2 \epsilon_2 A_2^2 \epsilon_3 A_3^2} \  \frac{\omega^3}{3!}, \\
		F_1^3 & = - \frac{6m_2m_3}{\epsilon_1 A_2^2 \epsilon_2 A_2^2 \epsilon_3 A_3^2} \left( m_1 + 2a^2 \right) \frac{\omega^3}{3!}, \\
		F_2^2 \wedge F_1 = F_3^2 \wedge F_1 & = \frac{ \epsilon_2 \epsilon_3}{\epsilon_1 A_1^2 \epsilon_2 A_2^2 \epsilon_3 A_3^2} 2a^2 (2a^2+m_1)  \frac{\omega^3}{3!} ,
	\end{align*}
	while the term appearing in $\Im(E_1)$ is given by
	\begin{align*}
		\omega \wedge F_0 \wedge F_1 & =  \frac{1}{\epsilon_1 A_1^2 \epsilon_2 A_2^2 \epsilon_3 A_3^2} \biggl( \sumcyclic m_k ( p_i \epsilon_j A_j^2 + p_j \epsilon_i A_i^2  ) + 2a^2 ( p_2\epsilon_3 A_3^2 + p_3\epsilon_2 A_2^2  ) \biggr) \frac{\omega^3}{3!} .
	\end{align*}
\end{lem}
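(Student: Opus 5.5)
The plan is to expand every product in the basis $\omega_1,\omega_2,\omega_3,R_{23},I_{23}$ and reduce each top-degree product to a multiple of $\omega_1\wedge\omega_2\wedge\omega_3=\frac{\omega^3}{3!}$.

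First I will fix the multiplication rules for these $2$-forms. Since $\omega_i\wedge\omega_i=0$, a triple product $\omega_i\wedge\omega_j\wedge\omega_k$ is nonzero only for $\{i,j,k\}=\{1,2,3\}$, and then it equals $\frac{\omega^3}{3!}$. Both $R_{23}$ and $I_{23}$ are built from $\alpha_2,\overline{\alpha}_2,\alpha_3,\overline{\alpha}_3$ only, so they wedge to zero against $\omega_2$ and $\omega_3$. Writing $\overline{\alpha}_2\wedge\alpha_3=X+\ii Y$, the relation $R_{23}\wedge I_{23}=0$ (already stated) gives $X\wedge Y=0$. We also have $X^2-Y^2=\Re\bigl((\overline{\alpha}_2\wedge\alpha_3)^2\bigr)=0$. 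Together with $|\overline{\alpha}_2\wedge\alpha_3|^2=\overline{\alpha}_2\wedge\alpha_3\wedge\alpha_2\wedge\overline{\alpha}_3=-4\,\omega_2\wedge\omega_3$, this gives
\[R_{23}^2=I_{23}^2=\frac{2\,\omega_2\wedge\omega_3}{\epsilon_2\epsilon_3A_2^2A_3^2}\]
up to a sign, which I will pin down with a short check using $X^2=Y^2=\tfrac12(X^2+Y^2)$. Wedging with $\omega_1$ then yields the common value of $\omega\wedge F_2^2=\omega\wedge F_3^2$. Only the $\omega_1$-component of $\omega$, respectively of $F_0$ or $F_1$, survives against $R_{23}^2$, which produces the stated $F_2^2\wedge F_0$ and $F_2^2\wedge F_1$ with the factors $p_1$ and $(2a^2+m_1)$.

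Second, I will write $F_0=\sum_i f_i\omega_i$ with $f_i=-p_i/(\epsilon_iA_i^2)$. The second block of Lemma~\ref{lem:Curvature r_1} (the one for $F_1$) gives $F_1=\sum_i g_i\omega_i$ with $g_i=-m_i/(\epsilon_iA_i^2)$ for $i=2,3$ and $g_1=-(m_1+2a^2)/(\epsilon_1A_1^2)$. With this notation every listed term is an elementary symmetric expression in the coefficients: $\omega^2\wedge F=2\sum_i c_i$, $\omega\wedge F\wedge G=\sum_{i\ne j}c_id_j$, and $F^3=6c_1c_2c_3$, all multiplied by $\frac{\omega^3}{3!}$.

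Third, I will substitute these coefficients and clear the common denominator $\epsilon_1A_1^2\epsilon_2A_2^2\epsilon_3A_3^2$. Every $a^2$ correction comes solely from the shift $m_1\mapsto m_1+2a^2$ in $g_1$, so it appears only in terms involving $g_1$. This explains the pieces $2a^2(m_2\epsilon_3A_3^2+m_3\epsilon_2A_2^2)$, $2a^2(p_2m_3+p_3m_2)$, $2a^2p_2p_3$ and $F_1^3\propto m_2m_3(m_1+2a^2)$.

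The main obstacle is bookkeeping of signs and normalisations: the factor $\epsilon_2\epsilon_3$ and the overall sign of $R_{23}^2$, and the apparent typos in the statement (for example $A_2^2$ written where $A_1^2$ is meant in some denominators). I will verify the $R_{23}^2$ sign explicitly in the real coframe $\eta_i,\theta_i$ before assembling the rest.
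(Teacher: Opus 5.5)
Your proposal is correct and is essentially the paper's own argument. Both expand every product in the diagonal basis $\omega_1,\omega_2,\omega_3$ and use $R_{23}^2=I_{23}^2=-\frac{2}{A_2^2A_3^2}\,\omega_2\wedge\omega_3$, so that only $\omega_1$-components survive against $F_2^2,F_3^2$; the $a^2$-corrections come from the extra term in $F_1=\frac{d\beta_m}{2}-\frac{2a^2}{\epsilon_1A_1^2}\omega_1$, which your shift $m_1\mapsto m_1+2a^2$ in $g_1$ packages neatly. Your computation $X^2=Y^2=-2\,\omega_2\wedge\omega_3$ already fixes the sign, so the ``up to a sign'' hedge is unnecessary: the $\epsilon$-factors enter $R_{23}^2$ only squared, and your $\epsilon_2\epsilon_3$ agrees only because $\epsilon_2\epsilon_3=-1$. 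You are also right that the factor $\epsilon_1A_2^2$ in the denominators of $F_1^2\wedge F_0$, $F_0^2\wedge F_1$ and $F_1^3$ should read $\epsilon_1A_1^2$.
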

\begin{proof}
We start by noticing that
\[F_0^2 = \left( \frac{d\beta_p}{2} \right)^2 =\sumcyclic \frac{2p_j p_k}{\epsilon_j A_j^2 \epsilon_k A_k^2} \ \omega_j \wedge \omega_k , \]
This equation can also be used to compute
\[F_0^3 = \left( \frac{d\beta_p}{2} \right)^3 = - 6 \prod_{i=1}^3\frac{p_i}{\epsilon_i A_i^2} \ \omega_1 \wedge \omega_2 \wedge \omega_3 = - \prod_{i=1}^3\frac{p_i}{\epsilon_i A_i^2} \ \omega^3,\]
and
\begin{align*}
	\left( \frac{d\beta_p}{2} \right)^2 \wedge  \frac{d\beta_m}{2}  & = - \sumcyclic \frac{2m_i p_j p_k}{\epsilon_i A_i^2 \epsilon_j A_j^2 \epsilon_k A_k^2} \ \omega_1 \wedge \omega_2 \wedge \omega_3 =  - \sumcyclic \frac{2m_i p_j p_k}{\epsilon_i A_i^2 \epsilon_j A_j^2 \epsilon_k A_k^2} \ \frac{\omega^3}{3!}, \\
	\left( \frac{d\beta_p}{2} \right)^2 \wedge  \frac{2\omega_1}{A_1^2} & = \frac{2}{A_1^2} \frac{2p_2p_3}{\epsilon_2 A_2^2 \epsilon_3 A_3^2}  \ \frac{\omega^3}{3!}.
\end{align*}
These equations further imply that
\begin{align*}
	F_0^2 \wedge F_1 & = \left( \frac{d\beta_p}{2} \right)^2 \wedge  \frac{d\beta_m}{2} -  a^2 \left( \frac{d\beta_p}{2} \right)^2 \wedge  \frac{2\omega_1}{A_1^2} \\
	& = \biggl( -\sumcyclic \frac{2m_i p_j p_k}{\epsilon_i A_i^2 \epsilon_j A_j^2 \epsilon_k A_k^2} - \frac{2a^2}{A_1^2}  \frac{2p_2p_3}{\epsilon_2 A_2^2 \epsilon_3 A_3^2} \biggr)  \frac{\omega^3}{3!} \\
	& = - 2\frac{\sumcyclic m_i p_j p_k + 2a^2  p_2p_3 }{\epsilon_1 A_2^2 \epsilon_2 A_2^2 \epsilon_3 A_3^2} \  \frac{\omega^3}{3!},
\end{align*}
and 
\begin{align*}
	F_1^2 \wedge F_0 & = \left( \frac{d\beta_m}{2} \right)^2 \wedge  \frac{d\beta_p}{2} -   2a^2 \frac{2\omega_1}{A_1^2} \wedge \frac{d \beta_m}{2} \wedge \frac{d\beta_p}{2} \\
	& = \biggl( - \sumcyclic \frac{2p_i m_j m_k}{\epsilon_i A_i^2 \epsilon_j A_j^2 \epsilon_k A_k^2} - 2a^2 \frac{2}{A_1^2} \frac{p_2m_3 +p_3m_2}{\epsilon_2 A_2^2 \epsilon_3 A_3^2} \biggr)  \frac{\omega^3}{3!}\\
	& = - 2\frac{\sumcyclic p_i m_j m_k + 2a^2  (p_2m_3+p_3m_2) }{\epsilon_1 A_2^2 \epsilon_2 A_2^2 \epsilon_3 A_3^2} \  \frac{\omega^3}{3!}
\end{align*}
and
\begin{align*}
	F_1^3 & = \biggl( \bigl(\frac{d\beta_m}{2}\bigr)^2 -2 a^2 \frac{2\omega_1}{A_1^2} \wedge \frac{d \beta_m}{2} \biggr) \wedge \biggl( \frac{d\beta_m }{2} - \frac{2a^2}{A_1^2} \omega_1 \biggr) \\
	& = \bigl(\frac{d\beta_m}{2}\bigr)^3 - 3 \times \frac{2a^2}{A_1^2} \omega_1 \wedge \bigl(\frac{d\beta_m}{2}\bigr)^2  \\
	& = - \prod_{i=1}^3\frac{m_i}{\epsilon_i A_i^2} \ \omega^3 - 3 \times \frac{2a^2}{A_1^2} \frac{2m_2m_3}{\epsilon_2 A_2^2 \epsilon_3 A_3^2} \omega_1 \wedge \omega_2 \wedge \omega_3 \\
	& = - \bigl( 6\prod_{i=1}^3\frac{m_i}{\epsilon_i A_i^2} + a^2  \frac{12}{A_1^2} \frac{m_2m_3}{\epsilon_2 A_2^2 \epsilon_3 A_3^2} \bigr) \frac{\omega^3}{3!} \\
	& = - \frac{6}{\epsilon_1 A_2^2 \epsilon_2 A_2^2 \epsilon_3 A_3^2} \left( m_1m_2m_3 + 2 a^2 m_2m_3 \right) \frac{\omega^3}{3!} \\
	& = - \frac{6m_2m_3}{\epsilon_1 A_2^2 \epsilon_2 A_2^2 \epsilon_3 A_3^2} \left( m_1 + 2 a^2 \right) \frac{\omega^3}{3!}.
\end{align*}
Next, we further compute
\begin{align*}
	\omega \wedge F_0^2 & = \sumcyclic \frac{2p_j p_k}{\epsilon_j A_j^2 \epsilon_k A_k^2} \ \frac{\omega^3}{3!} \\
	& = 2 \frac{\sumcyclic \epsilon_i A_i^2 p_jp_k}{\epsilon_1 A_2^2 \epsilon_2 A_2^2 \epsilon_3 A_3^2} \ \frac{\omega^3}{3!} ,
\end{align*}
and	
\begin{align*}
	\omega \wedge F_1^2 & = \sumcyclic \frac{2m_j m_k}{\epsilon_j A_j^2 \epsilon_k A_k^2} \ \frac{\omega^3}{3!} - 2a^2 \frac{2\omega_1}{A_1^2} \wedge \frac{d\beta_m}{2} \wedge \omega \\
	& = \biggl( \sumcyclic \frac{2m_j m_k}{\epsilon_j A_j^2 \epsilon_k A_k^2} + 2a^2 \frac{2}{A_1^2} \bigl( \frac{m_2}{\epsilon_2 A_2^2} + \frac{m_3}{\epsilon_3 A_3^2} \bigr) \biggr)  \frac{\omega^3}{3!} \\
	& = \frac{2}{\epsilon_1 A_2^2 \epsilon_2 A_2^2 \epsilon_3 A_3^2} \biggl( \sumcyclic \epsilon_i A_i^2 m_j m_k + 2a^2 ( m_2\epsilon_3 A_3^2 + m_3\epsilon_2 A_2^2 ) \biggr)  \frac{\omega^3}{3!} ,
\end{align*}
and, since $\epsilon_1=1$,
\begin{align*}
	\omega^2 \wedge F_1 & = 2 (\omega_2\wedge\omega_3 +\omega_1\wedge\omega_1  + \omega_1\wedge\omega_1) \wedge \left( \frac{d\beta_m}{2} - \frac{2a^2}{A_1^2} \omega_1 \right) \\
	& = -2 \left( \sum_{i=1}^3 \frac{m_i}{\epsilon_i A_i^2} + \frac{2a^2}{\epsilon_1A_1^2} \right) \frac{\omega^3}{3!}\\
	& = - \frac{2}{\epsilon_1 A_1^2 \epsilon_2 A_2^2 \epsilon_3 A_3^2} \left( \sumcyclic m_i \epsilon_j A_j^2 \epsilon_k A_k^2  + 2a^2 \epsilon_2 A_2^2 \epsilon_3 A_3^2  \right) \frac{\omega^3}{3!}.
\end{align*}
On the other hand,  using that $\epsilon_j \epsilon_k=-\epsilon_i$ for any cyclic permutation $(i,j,k)$ of $(1,2,3)$, we find that
\begin{align*}
	\omega^2 \wedge F_1 &  = - \frac{2}{\epsilon_1 A_1^2 \epsilon_2 A_2^2 \epsilon_3 A_3^2} \biggl( - \sumcyclic m_i \epsilon_i A_j^2 A_k^2  + 2a^2 \epsilon_2 A_2^2 \epsilon_3 A_3^2  \biggr) \frac{\omega^3}{3!},
\end{align*}
Furthermore, using $\epsilon_1=1$ and $\epsilon_j \epsilon_k=-\epsilon_i$ again, we also compute
\begin{align*}
	\omega^2 \wedge F_0 & = 2 (\omega_{23} + \omega_{31} + \omega_{12}) \wedge \bigl( \frac{d\beta_p}{2} \bigr) \\
	& = -2 \biggl( \sum_{i=1}^3 \frac{p_i}{\epsilon_i A_i^2} \biggr) \frac{\omega^3}{3!} \\
	& = - \frac{2}{\epsilon_1 A_1^2 \epsilon_2 A_2^2 \epsilon_3 A_3^2} \biggl( \sumcyclic p_i \epsilon_j A_j^2 \epsilon_k A_k^2 \biggr) \frac{\omega^3}{3!} \\
	& =  \frac{2}{\epsilon_1 A_1^2 \epsilon_2 A_2^2 \epsilon_3 A_3^2} \biggl( \sumcyclic p_i \epsilon_i A_j^2 A_k^2 \biggr) \frac{\omega^3}{3!} .
\end{align*}
We now turn to the computation of $\omega \wedge F_0 \wedge F_1$ using $\epsilon_1=1$ once again
\begin{align*}
	\omega \wedge F_0 \wedge F_1 & = \omega \wedge \frac{d\beta_p}{2} \wedge \left(\frac{d\beta_m}{2} - \frac{2a^2}{A_1^2} \omega_1 \right) \\
	& = - \sumcyclic \frac{p_i}{\epsilon_i A_i^2} \omega_i \wedge (\omega_j + \omega_k) \wedge \biggl(- \sum_{l=1}^3 \frac{m_l}{\epsilon_l A_l^2} \omega_l - \frac{2a^2}{A_1^2} \omega_1 \biggr) \\
	& = - \sum_{i <j} \bigl( \frac{p_i}{\epsilon_i A_i^2} + \frac{p_j}{\epsilon_j A_j^2}  \bigr) \omega_i \wedge \omega_j \wedge  \biggl(- \sum_{l=1}^3 \frac{m_l}{\epsilon_l A_l^2} \omega_l - \frac{2a^2}{\epsilon_1 A_1^2} \omega_1 \biggr) \\
	& = \biggl( \sumcyclic \frac{m_k}{\epsilon_k A_k^2} \bigl( \frac{p_i}{\epsilon_i A_i^2} + \frac{p_j}{\epsilon_j A_j^2}  \bigr) + \frac{2a^2}{\epsilon_1 A_1^2} \bigl( \frac{p_2}{\epsilon_2 A_2^2} + \frac{p_3}{\epsilon_3 A_3^2}  \bigr) \biggr) \frac{\omega^3}{3!}.
\end{align*}
This can be put in a more convenient form by reducing everything to the same denominator as follows
\begin{align*}
	\omega \wedge F_0 \wedge F_1 & = \frac{1}{\epsilon_1 A_1^2 \epsilon_2 A_2^2 \epsilon_3 A_3^2} \biggl( \sumcyclic m_k ( p_i \epsilon_j A_j^2 + p_j \epsilon_i A_i^2  ) + 2a^2 ( p_2\epsilon_3 A_3^2 + p_3\epsilon_2 A_2^2  ) \biggr) \frac{\omega^3}{3!}.
\end{align*}
Finally, we turn to the terms in the equations which involve $F_2$ and $F_3$ which recall are given by $aR_{23}$ and $aI_{23}$ respectively. Then, from $R_{23}^2 = - \frac{2}{A_2^2A_3^2} \omega_2 \wedge \omega_3 = I_{23}^2$, we find that
\[F_2^2 = - a^2 \frac{2}{A_2^2A_3^2} \omega_2 \wedge \omega_3 = F_3^2,\]
which in turn can be used to compute 
\[F_2^2 \wedge \omega = - a^2 \frac{2}{A_2^2A_3^2} \omega_1 \wedge \omega_2 \wedge \omega_3 = - a^2 \frac{2}{A_2^2A_3^2} \frac{\omega^3}{3!} = a^2 \frac{2}{\epsilon_2 A_2^2 \epsilon_3 A_3^2} \frac{\omega^3}{3!},\]
where we used the fact that $\epsilon_2=-1$ and $\epsilon_3=1$. The same equation holds for $F_2^2 \wedge \omega$. Next, we find that $F_2^2 \wedge F_0 = F_3^2 \wedge F_0$ which is given by
\begin{align*}
	F_2^2 \wedge F_0  = a^2 \frac{2p_1}{\epsilon_1 A_1^2 A_2^2A_3^2} \frac{\omega^3}{3!} = -a^2 \frac{2p_1}{\epsilon_1 A_1^2 \epsilon_2 A_2^2 \epsilon_3 A_3^2} \frac{\omega^3}{3!} ,
\end{align*}
where, again, $\epsilon_2=-1$ and $\epsilon_3=1$ were used. Further using the fact that $\epsilon_1=1$ we have
\begin{align*}
	F_2^2 \wedge F_1 & =  - a^2 \frac{2}{A_2^2A_3^2} \omega_2 \wedge \omega_3 \wedge \left( \frac{d\beta_m}{2} - \frac{2a^2}{A_1^2} \omega_1 \right) \\
	& = - a^2 \frac{2}{A_2^2A_3^2} \left( - \frac{m_1}{\epsilon_1 A_1^2} - \frac{2a^2}{\epsilon_1 A_1^2} \right) \omega_1 \wedge \omega_2 \wedge \omega_3 \\
	& =  a^2 \frac{2}{\epsilon_1 A_1^2 A_2^2A_3^2} \left( 2a^2 + m_1 \right) \frac{\omega}{3!} \\
	& =  \frac{\epsilon_2 \epsilon_3 2a^2 (2a^2+m_1)}{\epsilon_1 A_1^2 \epsilon_2 A_2^2 \epsilon_3 A_3^2}   \frac{\omega}{3!}
\end{align*}
and the same equation holds for $F_3^2 \wedge F_1$.
\end{proof}
\begin{cor}\label{cor:R0_I0_R1_I1}
	In the same notation as before:
	\begin{align*}
		R_0 & := \frac{\Re (E_0)}{\frac{\omega^3}{A_1^2  A_2^2  A_3^2}}  =  A_1^2 A_2^2 A_3^2 + \sumcyclic \epsilon_i A_i^2 (p_jp_k +m_jm_k) , \\
		I_0 & := \frac{\Im (E_0)}{\frac{\omega^3}{A_1^2 A_2^2 A_3^2}}  = - p_1p_2p_3 - \sumcyclic p_i ( m_jm_k + \epsilon_i A_j^2 A_k^2 ),
	\end{align*}
	and
	\begin{align*}
		R_1 & := \frac{\Re (E_1)}{\frac{\omega^3}{ A_1^2  A_2^2  A_3^2}}  = - m_1 m_2 m_3 - \sumcyclic m_i (p_jp_k + \epsilon_i A_j^2 A_k^2) \\
		& \phantom{:= \frac{\Re (E_1)}{\frac{\omega^3}{ A_1^2  A_2^2  A_3^2}}  =} - 2a^2 \left( m_2m_3 +p_2p_3 + A_2^2 A_3^2 + \frac{m_1}{3} \right) - \frac{(2a^2)^2}{3} , \\
		I_1 & : = \frac{\Im (E_1)}{\frac{\omega^3}{A_1^2 A_2^2 A_3^2}}  =  - \sumcyclic  \epsilon_i A_i^2 \left( p_j m_k + m_j p_k \right) - 2a^2 \left( p_2\epsilon_3 A_3^2 + p_3\epsilon_2 A_2^2  \right)  .
	\end{align*}
\end{cor}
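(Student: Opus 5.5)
The plan is to substitute the identities of Lemma \ref{lem:componentsE0E1} into the formulas for $E_0$ and $E_1$ from Lemma \ref{lem:E}. Every term in those formulas is a multiple of $\frac{\omega^3}{3!}$, so the work is bookkeeping of coefficients. We multiply through by the normalising factor $A_1^2A_2^2A_3^2/\omega^3$, which introduces a factor $\tfrac16$. The sign conventions are $\epsilon_1=\epsilon_3=1$ and $\epsilon_2=-1$, so $\epsilon_1\epsilon_2\epsilon_3=-1$ and $\epsilon_j\epsilon_k=-\epsilon_i$ for cyclic $(i,j,k)$. We keep $m=(-2,1)$ and $m_2=1$ symbolic for as long as possible and specialise only where the stated formula demands it, for instance to absorb $m_1$-dependent terms.

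\textbf{Real part of $E_0$.} We have $\Re(E_0)=\omega^3-3\omega\wedge(F_0^2+F_1^2+F_2^2+F_3^2)$.
\begin{itemize}
\item The term $\omega^3$ gives $A_1^2A_2^2A_3^2$ after normalisation.
\item Each $3\omega\wedge F_\mu^2$ has coefficient $6\,(\cdots)/(\epsilon_1\epsilon_2\epsilon_3A_1^2A_2^2A_3^2)$. Normalising gives $(\cdots)/(\epsilon_1\epsilon_2\epsilon_3)=-(\cdots)$, and combined with the minus sign in front this becomes $+(\cdots)$.
\item This yields $\sumcyclic\epsilon_iA_i^2(p_jp_k+m_jm_k)$, together with the $a^2$ terms $2a^2(m_2\epsilon_3A_3^2+m_3\epsilon_2A_2^2)+2\cdot 2a^2\epsilon_1A_1^2$.
\item We then check that these $a^2$ contributions cancel. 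This should use $m_2=m_3=1$ and $A_2^2=A_1^2+A_3^2$, which gives $2a^2(A_3^2-A_2^2+2A_1^2)$. This does not vanish identically, so we must recheck the sign of $\omega\wedge F_2^2$ and its counterpart for $F_3$. We expect the stated $R_0$ to reflect exact cancellation after the sign conventions are handled carefully.
\end{itemize}

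\textbf{Imaginary part of $E_0$.} We have $\Im(E_0)=3\omega^2\wedge F_0-F_0^3-3F_0\wedge\sum_i F_i^2$.
\begin{itemize}
\item The term $F_0^3$ normalises to $-p_1p_2p_3$.
\item The term $3\omega^2\wedge F_0$ gives $-\sumcyclic p_i\epsilon_iA_j^2A_k^2$.
\item The terms $F_1^2\wedge F_0$ give $-\sumcyclic p_im_jm_k$ and $2a^2(p_2m_3+p_3m_2)$.
\item The terms $F_2^2\wedge F_0$ and $F_3^2\wedge F_0$ together contribute $\pm 4a^2p_1$.
\item With $m_2=m_3=1$, the $a^2$ terms become $2a^2(p_2+p_3)+4a^2p_1$ up to sign. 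Using $p_1+p_2+p_3=0$ this should reduce to zero or to an expression that combines as in the statement.
\end{itemize}

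\textbf{$E_1$.} We use the factorised form $E_1=F_1\wedge(3\omega^2-3F_0^2-\sum_j F_j^2+6\ii\,\omega\wedge F_0)$ from Lemma \ref{lem:E}.
\begin{itemize}
\item The imaginary part is just $6\,\omega\wedge F_0\wedge F_1$. Normalising with $\epsilon_1\epsilon_2\epsilon_3=-1$ immediately gives $I_1$.
\item For the real part we add $3\omega^2\wedge F_1$, $-3F_0^2\wedge F_1$, $-F_1^3$ and $-2F_2^2\wedge F_1$.
\item These produce $-m_1m_2m_3-\sumcyclic m_i(p_jp_k+\epsilon_iA_j^2A_k^2)$ and $-2a^2(m_2m_3+p_2p_3+A_2^2A_3^2)$, where $\epsilon_2\epsilon_3A_2^2A_3^2$ becomes $A_2^2A_3^2$ after the sign flip.
\item The term $-2F_2^2\wedge F_1$ carries the factor $\tfrac{1}{6}\cdot 2\cdot 2a^2(2a^2+m_1)$, which accounts for the remaining $-\tfrac{2a^2m_1}{3}-\tfrac{(2a^2)^2}{3}$, up to sign.
\end{itemize}

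\textbf{Main obstacle.} The only real difficulty is sign and normalisation bookkeeping. The relevant sources are the products $\epsilon_i$, the conversion $\omega_1\wedge\omega_2\wedge\omega_3=\omega^3/3!$, and the typos visible in Lemma \ref{lem:componentsE0E1}, such as $A_2^2$ appearing in place of $A_1^2$ in some denominators. We would recompute each wedge directly from $\omega_i$-coefficients to resolve them. For the $a^2$ terms in $R_0$ and $I_0$ specifically, we would first try reducing with $m=(-2,1,1)$ and $p_1+p_2+p_3=0$ to confirm cancellation.
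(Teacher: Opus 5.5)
Your route is the paper's: substitute Lemma~\ref{lem:componentsE0E1} into Lemma~\ref{lem:E} and divide by $\omega^3/(A_1^2A_2^2A_3^2)$. Your treatment of $E_1$ is correct. The term $-2F_2^2\wedge F_1$ does give $-\frac{2a^2(2a^2+m_1)}{3}$, and $6\,\omega\wedge F_0\wedge F_1$ gives $I_1$. The gap is in $E_0$, where the real content of the statement is that $R_0$ and $I_0$ contain no $a$ at all. Your own bookkeeping leaves residues $2a^2A_1^2$ in $R_0$ and $2a^2(p_2+p_3)+4a^2p_1=2a^2p_1$ in $I_0$. You then defer to an unspecified ``sign recheck''. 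No sign change rescues this: flipping the $F_2,F_3$ contributions gives $2a^2(A_3^2-A_2^2-2A_1^2)=-6a^2A_1^2$ and $2a^2(p_2+p_3)-4a^2p_1=-6a^2p_1$. So as written, the proposal does not establish the formulas for $R_0$ and $I_0$.

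The actual error is a factor of $2$, not a sign. You counted each of $\omega\wedge F_2^2$ and $\omega\wedge F_3^2$ as contributing $2a^2\epsilon_1A_1^2$ to the bracket, and each of $F_2^2\wedge F_0$, $F_3^2\wedge F_0$ as contributing $2a^2p_1$. But the $2$ in $\omega\wedge F_2^2=2\frac{\epsilon_1A_1^2a^2}{\epsilon_1A_1^2\epsilon_2A_2^2\epsilon_3A_3^2}\frac{\omega^3}{3!}$ is the same outer prefactor as in $\omega\wedge F_1^2$. So each term contributes only $a^2\epsilon_1A_1^2$ (resp.\ $a^2p_1$).

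To see this directly, use $F_2^2=F_3^2=-\frac{2a^2}{A_2^2A_3^2}\,\omega_2\wedge\omega_3$. After normalising:
\begin{itemize}
\item $-3\,\omega\wedge(F_2^2+F_3^2)\mapsto 2a^2A_1^2$, while the $a^2$-part of $-3\,\omega\wedge F_1^2$ maps to $2a^2(m_2\epsilon_3A_3^2+m_3\epsilon_2A_2^2)=-2a^2A_1^2$.
\item $-3(F_2^2+F_3^2)\wedge F_0\mapsto -2a^2p_1$, while the $a^2$-part of $-3F_1^2\wedge F_0$ maps to $-2a^2(m_3p_2+m_2p_3)=2a^2p_1$.
\end{itemize}
The $a^2$ terms therefore cancel exactly, via $m_2\epsilon_3A_3^2+m_3\epsilon_2A_2^2+\epsilon_1A_1^2=A_1^2-A_2^2+A_3^2=0$ and $p_1+m_3p_2+m_2p_3=p_1+p_2+p_3=0$. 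These are precisely the two identities on which the paper's proof rests.
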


\begin{proof}
	Direct computation using the results of the previous Lemma yield
	\begin{align*}
		\frac{\Re (E_0)}{\frac{\omega^3}{\epsilon_1 A_1^2 \epsilon_2 A_2^2 \epsilon_3 A_3^2}} & = \epsilon_1 A_1^2 \epsilon_2 A_2^2 \epsilon_3 A_3^2 - a^2 \left(m_2 \epsilon_3 A_3^2 + m_3 \epsilon_2 A_2^2 + \epsilon_1 A_1^2 \right)- \sumcyclic \epsilon_i A_i^2 (p_jp_k +m_jm_k) , \\
		\frac{\Im (E_0)}{\frac{\omega^3}{\epsilon_1 A_1^2 \epsilon_2 A_2^2 \epsilon_3 A_3^2}} & = - \sumcyclic p_i \epsilon_j A_j^2 \epsilon_k A_k^2 + p_1p_2p_3 + \sum_{i=1}^3 p_i m_jm_k + 2a^2 (p_1+p_2m_3+p_3m_2).
	\end{align*}
	We can now use the fact that $m_1=-2$, $m_2=1$, $m_3=1$, to compute that
	\[m_2 \epsilon_3 A_3^2 + m_3 \epsilon_2 A_2^2 + \epsilon_1 A_1^2 =  A_3^2 - A_2^2 + A_1^2=0,\]
	and
	\[p_1+p_2m_3+p_3m_2= p_1 +p_2+p_3=0.\]
	Substituting these, and using $\epsilon_i=-\epsilon_j \epsilon_k$ and $\epsilon_1 \epsilon_2 \epsilon_3 =-1$, yields the formulas in the statement for $\Re(E_0)$ and $\Im(E_0)$. The claimed formulas for $\Re(E_1)$ and $\Im(E_1)$ follow from a similarly computation. 
\end{proof}

We are now ready to tackle the dHYM equation. Let
	\begin{align}
		\AAA & := \frac{1}{3}  \left( A_1^2 A_2^2 A_3^2 + \sumcyclic \epsilon_i A_i^2 (p_jp_k +m_jm_k) \right) , \label{eq:defA}\\
		\BBB & := \left( A_1^2 A_2^2 A_3^2 + \sumcyclic \epsilon_i A_i^2 (p_jp_k +m_jm_k) \right) \left( m_2m_3 +p_2p_3 + A_2^2 A_3^2 + \frac{m_1}{3} \right) \notag \\*
		& \quad -  \left( p_2\epsilon_3 A_3^2 + p_3\epsilon_2 A_2^2  \right) \left( p_1p_2p_3 + \sumcyclic p_i ( m_jm_k + \epsilon_i A_j^2 A_k^2 ) \right) ,\label{eq:defB} \\
		\CCC & := \left( m_1 m_2 m_3 + \sumcyclic m_i (p_jp_k + \epsilon_i A_j^2 A_k^2) \right) \left( A_1^2 A_2^2 A_3^2 + \sumcyclic \epsilon_i A_i^2 (p_jp_k +m_jm_k) \right)\notag \\*
		& \quad - \left( p_1p_2p_3 + \sumcyclic p_i ( m_jm_k + \epsilon_i A_j^2 A_k^2 ) \right)  \sumcyclic  {\epsilon_i A_i^2} \left( p_j m_k + m_j p_k \right) .\label{eq:defC}
	\end{align}

\begin{proposition}
	Let $p=(p_1,p_3) \in \Zeven \times \Zodd$. Then, the connection $A$ from Equation \eqref{eq:invariant connection} on $V_\rho$ with $\rho_1-\rho_2=ir_1$ and $\rho_1+\rho_2=\beta_p$ is dHYM if and only if
	\begin{equation}\label{eq:Quadratic_Equation}
		(2a^2)^2 \AAA + 2a^2 \BBB + \CCC =0.
	\end{equation}
\end{proposition}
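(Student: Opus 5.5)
The plan is to reduce everything to the scalar condition of Lemma~\ref{lem:E}. We have already checked, using the identities for $R_{23}$ and $I_{23}$ listed before Lemma~\ref{lem:componentsE0E1}, that $E_2=0=E_3$ for the connection \eqref{eq:invariant connection}. Hence by Lemma~\ref{lem:E}, $A$ is dHYM if and only if $\Im(E_0)\Im(E_1)+\Re(E_0)\Re(E_1)=0$. All four quantities are multiples of the same positive top form $\omega^3/(A_1^2A_2^2A_3^2)$. So, dividing by its square, the condition becomes
\[
R_0R_1+I_0I_1=0,
\]
with $R_0,I_0,R_1,I_1$ as in Corollary~\ref{cor:R0_I0_R1_I1}. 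The integrality hypothesis $p\in\Zeven\times\Zodd$ plays no role in the algebra. It only guarantees, via Lemma~\ref{lem:gaugedconnection}, that $\rho_1,\rho_2$ are integral weights, so that $V_\rho$ and the connection with $a\neq 0$ actually exist.

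The next step is to sort the terms by powers of $2a^2$. By Corollary~\ref{cor:R0_I0_R1_I1}, neither $R_0$ nor $I_0$ depends on $a$; the $a$-dependence was cancelled using $m=(-2,1)$ and $A_2^2=A_1^2+A_3^2$. On the other hand $R_1$ and $I_1$ are polynomials in $2a^2$:
\begin{itemize}
\item $R_1$ has constant term $-\bigl(m_1m_2m_3+\sumcyclic m_i(p_jp_k+\epsilon_iA_j^2A_k^2)\bigr)$, linear coefficient $-\bigl(m_2m_3+p_2p_3+A_2^2A_3^2+\tfrac{m_1}{3}\bigr)$, and quadratic coefficient $-\tfrac13$;
\item $I_1$ has constant term $-\sumcyclic\epsilon_iA_i^2(p_jm_k+m_jp_k)$ and linear coefficient $-(p_2\epsilon_3A_3^2+p_3\epsilon_2A_2^2)$.
\end{itemize}

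Expanding $R_0R_1+I_0I_1$ and multiplying by $-1$ then gives a quadratic polynomial in $2a^2$.
\begin{itemize}
\item The $(2a^2)^2$ coefficient is $\tfrac13R_0$, which is $\AAA$ by \eqref{eq:defA}.
\item The $2a^2$ coefficient is $R_0\bigl(m_2m_3+p_2p_3+A_2^2A_3^2+\tfrac{m_1}3\bigr)+I_0\,(p_2\epsilon_3A_3^2+p_3\epsilon_2A_2^2)$. Since $I_0=-\bigl(p_1p_2p_3+\sumcyclic p_i(m_jm_k+\epsilon_iA_j^2A_k^2)\bigr)$, this is exactly $\BBB$ in \eqref{eq:defB}.
\item The constant term is $R_0\bigl(m_1m_2m_3+\sumcyclic m_i(\cdots)\bigr)+I_0\sumcyclic\epsilon_iA_i^2(p_jm_k+m_jp_k)$, which after the same substitution is $\CCC$ in \eqref{eq:defC}.
\end{itemize}
This yields \eqref{eq:Quadratic_Equation}, and every step is an equivalence.

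The only real obstacle is the bookkeeping in Corollary~\ref{cor:R0_I0_R1_I1}, since its proof is only sketched for $R_1$ and $I_1$. If needed, I would rederive these from Lemma~\ref{lem:E}:
\[
\Re E_1=3\omega^2\wedge F_1-3F_0^2\wedge F_1-F_1^3-2F_2^2\wedge F_1,\qquad \Im E_1=6\,\omega\wedge F_0\wedge F_1,
\]
using the terms listed in Lemma~\ref{lem:componentsE0E1}. The main point to check is that the $F_2^2\wedge F_1$ and $F_1^3$ contributions produce the $-\tfrac13(2a^2)^2$ and $-\tfrac{m_1}{3}\,2a^2$ terms with the stated normalization, and that the signs $\epsilon=(1,-1,1)$ and the overall factor from $\epsilon_1\epsilon_2\epsilon_3=-1$ are handled consistently. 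Beyond that, the proposition is a direct regrouping of terms.
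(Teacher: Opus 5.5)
Your proposal is correct and is essentially the paper's argument. You use $E_2=E_3=0$ and Lemma~\ref{lem:E} to reduce to $R_0R_1+I_0I_1=0$ in the notation of Corollary~\ref{cor:R0_I0_R1_I1}, then multiply by $-1$ and read off $\AAA$, $\BBB$, $\CCC$ as the coefficients in $2a^2$; with $I_0=-\bigl(p_1p_2p_3+\sumcyclic p_i(m_jm_k+\epsilon_iA_j^2A_k^2)\bigr)$, your coefficients match \eqref{eq:defA}--\eqref{eq:defC}, and the hypothesis $p\in\Zeven\times\Zodd$ plays exactly the role the paper gives it (guaranteeing that $\rho_1,\rho_2$ are integral).
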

\begin{proof}
If $p=(p_1,p_3) \in \Zeven \times \Zodd$ and $\rho_1-\rho_2=ir_1$, $\rho_1+\rho_2=\beta_p$, then both $\rho_1$ and $\rho_2$ are integral and so we can consider the bundle $V_\rho$. Then, using the fact that $E_2=0=E_3$ as we have proven at the beginning of this subsection, the dHYM equation for $A$ becomes Equation \eqref{eq:argument}. Using the notation of Corollary \ref{cor:R0_I0_R1_I1}, we see that this equation is equivalent to
\begin{align*}
0&=R_0R_1+I_0I_1\\
&=
	\biggl( p_1p_2p_3 + \sumcyclic p_i ( m_jm_k + \epsilon_i A_j^2 A_k^2 ) \biggr) \biggl(  \sumcyclic  \epsilon_i A_i^2 ( p_j m_k + m_j p_k ) + 2a^2 ( p_2\epsilon_3 A_3^2 + p_3\epsilon_2 A_2^2  )  \biggr)  \\
& \quad  +\biggl( A_1^2 A_2^2 A_3^2 + \sumcyclic \epsilon_i A_i^2 (p_jp_k +m_jm_k) \biggr)  \\
&\quad\times \biggl( - m_1 m_2 m_3 - \sumcyclic m_i (p_jp_k + \epsilon_i A_j^2 A_k^2)  - 2a^2 \bigl( m_2m_3 +p_2p_3 + A_2^2 A_3^2 + \frac{m_1}{3} \bigr) - \frac{(2a^2)^2}{3} \biggr).
\end{align*}
One can then write this quadratic equation in $2a^2$ in terms of its coefficients $\AAA,\BBB,\CCC$ to complete the proof.
\end{proof}

\subsection{The large radius regime}
In this section, we prove existence of solutions in the large radius regime. We choose $B_i>0$ such that $B_2^2=B_1^2+B_3^2$ (to guarantee that the Hermitian structure is K\"ahler), and set $A_i=tB_i$, and examine the behaviour of the quadratic Equation \eqref{eq:Quadratic_Equation} when $t\ge 0$. The asymptotic behaviour of the coefficients $\AAA,\BBB,\CCC$ is examined in Lemma \ref{lem:Coefficient_Large_Radius}, and used in Theorem \ref{thm:Large_Radius} to prove the existence of solutions in this limit.

\begin{lem}\label{lem:Coefficient_Large_Radius}
	 Let $B_i>0$ be such that $B_2^2=B_1^2+B_3^2$ and consider $A_i=tB_i$ for $i \in \{ i,2,3\}$ with $t \gg 1$. Then, as $t\to\infty$,
	 \begin{align*}
	 	\AAA & = \frac{B_1^2 B_2^2 B_3^2}{3}t^6  + O(t^2) , \\
	 	\BBB & =   B_1^2 B_2^4 B_3^4 t^{10} +O(t^6) , \\
	 	\CCC & =  t^{10} B_1^2 B_2^2 B_3^2 \sum_{i=1}^3 \epsilon_i m_i B_j^2 B_k^2 + O(t^6) .
	 \end{align*}
	 In particular, $\AAA>0$ and $\BBB>0$ for large $t \gg 1$. Furthermore, we can also write $\CCC$ as
	 \begin{align*}
	 	\CCC & =  t^{10} B_1^2 B_2^2 B_3^2 \bigl( m_1 B_3^2 (B_3^2+2B_1^2 ) + m_3 B_1^2 (B_1^2+2B_3^2) \bigr) + O(t^6) ,
	 \end{align*}
	 and 
	 \begin{equation}\label{eq:signCvsslopecomparison}
		 	 	\sign(\CCC) = \sign \bigl( \mu(L_{\rho_1})- \mu(V_\rho) \bigr)\quad \text{ for large }t \gg 1
	 \end{equation}
\end{lem}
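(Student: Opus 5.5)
The plan is direct degree counting in $t$, substituting $A_i=tB_i$ into the explicit definitions \eqref{eq:defA}--\eqref{eq:defC}. The integers $m=(-2,1)$ (so $m_2=1$) and $p$ are fixed, so each monomial has a degree in $t$ equal to twice the number of $A$-factors it contains.

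\emph{Coefficient $\AAA$.} The term $A_1^2A_2^2A_3^2$ is $t^6B_1^2B_2^2B_3^2$. The sum $\sumcyclic \epsilon_i A_i^2(p_jp_k+m_jm_k)$ is $O(t^2)$. This gives the expansion for $\AAA$.

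\emph{Coefficient $\BBB$.} The first product has a leading factor $t^6B_1^2B_2^2B_3^2$ and a second factor $t^4B_2^2B_3^2+O(1)$. Their product is $t^{10}B_1^2B_2^4B_3^4+O(t^6)$, since each cross term loses at least $t^4$.

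In the second product, $p_2\epsilon_3A_3^2+p_3\epsilon_2A_2^2=O(t^2)$. The bracket $p_1p_2p_3+\sumcyclic p_i(m_jm_k+\epsilon_iA_j^2A_k^2)$ is $O(t^4)$. So that product is $O(t^6)$, which gives the claimed expansion for $\BBB$.

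\emph{Coefficient $\CCC$.} The first factor of the first product is
\[
m_1m_2m_3+\sumcyclic m_i(p_jp_k+\epsilon_iA_j^2A_k^2) = t^4\sumcyclic \epsilon_i m_i B_j^2B_k^2+O(1).
\]
Multiplying by $t^6B_1^2B_2^2B_3^2+O(t^2)$ gives $t^{10}B_1^2B_2^2B_3^2\sumcyclic\epsilon_im_iB_j^2B_k^2+O(t^6)$.

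The subtracted product is $O(t^4)\cdot O(t^2)=O(t^6)$, because $\sumcyclic\epsilon_iA_i^2(p_jm_k+m_jp_k)$ is only $O(t^2)$. This gives the expansion for $\CCC$.

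\emph{Rewriting and signs.} The alternative expression for $\CCC$ follows from exactly the algebra in the proof of Corollary \ref{cor:Cones}. That computation uses $m_2=-(m_1+m_3)$, $\epsilon=(1,-1,1)$ and $B_2^2=B_1^2+B_3^2$, and turns $\sumcyclic\epsilon_im_iB_j^2B_k^2$ into
\[
m_1B_3^2(B_3^2+2B_1^2)+m_3B_1^2(B_1^2+2B_3^2).
\]
Positivity of $\AAA$ and $\BBB$ for large $t$ is immediate, since their leading coefficients are positive.

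For \eqref{eq:signCvsslopecomparison}, Lemma \ref{lem:Degrees} gives
\[
\mu(L_{\rho_1})-\mu(V_\rho)=\frac{v}{2\pi}\sumcyclic\epsilon_im_iA_j^2A_k^2=\frac{v}{2\pi}\,t^4\sumcyclic\epsilon_im_iB_j^2B_k^2,
\]
with $v>0$. This has the same sign as the leading coefficient of $\CCC$. The leading term dominates provided that coefficient is nonzero. With $m=(-2,1)$ it equals $-2B_3^4-2B_1^2B_3^2+B_1^4$, which is generically nonzero. This is implicitly required: on the degenerate locus the slopes coincide, and the sign statement should be read with that caveat.

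\emph{Main obstacle.} The only delicate point is bookkeeping, namely confirming that no $t^{10}$ contribution hides in the subtracted terms of $\BBB$ and $\CCC$. For this I would count the $A$-factors carefully in $I_0$ and $I_1$ from Corollary \ref{cor:R0_I0_R1_I1}.
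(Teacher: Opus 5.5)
Your proposal is correct and follows the paper's own proof: substitute $A_i=tB_i$ into \eqref{eq:defA}--\eqref{eq:defC}, read off the leading orders, and match the sign of the leading coefficient of $\CCC$ with the slope difference from Lemma \ref{lem:Degrees}. Your caveat is also sound: \eqref{eq:signCvsslopecomparison} has content only off the ray where $B_1^4-2B_1^2B_3^2-2B_3^4=0$, and the paper applies it only on the cone $\mathcal{K}_2$, where that coefficient is strictly negative.
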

\begin{proof}
	The proof follows from simply replacing $A_i=tB_i$, for $i \in \{ i,2,3\}$, into the formulas for $\AAA$, $\BBB$, and $\CCC$ given by Equations \eqref{eq:defA}, \eqref{eq:defB}, and \eqref{eq:defC}. 
	Finally, we notice that for such large $t \gg 1$, the sign of $\CCC$ coincides with that of the quantity $\sumcyclic \epsilon_i m_i B_j^2 B_k^2 $ which in turn coincides with that of $\mu(L_{\rho_1})- \mu(V_\rho)$ as computed in Lemma \ref{lem:Degrees}. 
\end{proof}

Recall that $\mathcal{K}_2$ represents the non-empty cone defined in Corollary \ref{cor:Cones} by the condition that $\mu(L_{\rho_1}) < \mu(V_\rho)< \mu(L_{\rho_1})$

\begin{thm}\label{thm:Large_Radius}
	Consider the complex vector bundle $V_\rho=L_{\rho_1} \oplus L_{\rho_2}$ with $i(\rho_1-\rho_2)=r_1$ and any K\"ahler class $[\omega] \in \mathcal{K}_2$. 
	Then, there is $T>0$ such that for all $t>T$, there is an irreducible deformed Hermitian--Yang--Mills connection with respect to the K\"ahler form $t\omega$ on $V_\rho$. 
\end{thm}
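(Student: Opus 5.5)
The plan is to reduce the existence of a dHYM connection to finding a positive root $x=2a^2>0$ of the quadratic Equation \eqref{eq:Quadratic_Equation}, namely $\AAA x^2+\BBB x+\CCC=0$, for the rescaled metric. First fix $B_i>0$ with $B_2^2=B_1^2+B_3^2$ representing $\omega$, and set $A_i=tB_i$, so the metric is $t^2$ times the one given by the $B_i$. (Whether the statement's $t$ is the square of this parameter is immaterial, since all conditions are open as $t\to\infty$.) By Lemma \ref{lem:gaugedconnection} and the Remark preceding it, $p\in\Zeven\times\Zodd$, so the Proposition expressing the dHYM condition as Equation \eqref{eq:Quadratic_Equation} applies for every $t$.

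Next we use Lemma \ref{lem:Coefficient_Large_Radius}. For $t$ large, $\AAA>0$ and $\BBB>0$. Since $[\omega]\in\mathcal{K}_2$ we have $\mu(L_{\rho_1})-\mu(V_\rho)<0$. Note that this sign is scale invariant by Lemma \ref{lem:Degrees}, because $v$ is scale invariant and the bracket is homogeneous. Hence by \eqref{eq:signCvsslopecomparison} we get $\CCC<0$ for all $t>T$. The product of the two roots is $\CCC/\AAA<0$, so the roots are real and of opposite signs. There is therefore a unique positive root
\[x_t=\frac{-\BBB+\sqrt{\BBB^2-4\AAA\CCC}}{2\AAA}>0,\]
and we set $a=\sqrt{x_t/2}$. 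As a consistency check, the asymptotics give $x_t\approx -\CCC/\BBB=O(1)$, so $a$ stays bounded. This is the expected behaviour near the HYM limit. The connection \eqref{eq:invariant connection} with this $a$ then solves the dHYM equation with respect to $t\omega$.

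It remains to check that the resulting connection is admissible and irreducible. By Lemma \ref{lem:Hol bundle}, since only $a_1=a$ is nonzero and $a_2=0$, the curvature is of type $(1,1)$. So $A$ defines a holomorphic structure on $V_\rho$, compatible with the Hermitian metric. For irreducibility, suppose $A$ were reducible. Then the holonomy would preserve a line subbundle, and $F_A$ would commute with a nontrivial projection. Since $a\neq0$, the components $F_1$, $F_2=aR_{23}$ and $F_3=aI_{23}$ are all nonzero, so the curvature takes values spanning all of $\su(2)$ (the $\TT_1,\TT_2,\TT_3$ directions). Its commutant in $\End(\bbC^2)$ is then only the scalars, which rules out any invariant splitting. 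Equivalently, the endomorphism-valued curvature at the identity coset has no common eigenline, since the $\TT_2,\TT_3$ components are nonvanishing.

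The main obstacle I expect is in the asymptotics of $\CCC$. Its leading order $t^{10}$ term must be nonzero and have the sign of the slope difference. This is supplied by Lemma \ref{lem:Coefficient_Large_Radius}, where one has to verify that the $t^{12}$ terms cancel. The irreducibility argument also needs care, because holonomy is generated by curvature together with its covariant derivatives. I would handle it directly: a parallel splitting forces the curvature to be diagonal in a fixed frame up to the gauge action, and this contradicts the nonvanishing of $F_2$ and $F_3$.
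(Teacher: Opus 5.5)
Your proposal is correct and follows essentially the same route as the paper. You reduce to the quadratic \eqref{eq:Quadratic_Equation} in $2a^2$, use Lemma \ref{lem:Coefficient_Large_Radius} to get $\AAA>0$ and, from the slope condition defining $\mathcal{K}_2$, $\CCC<0$ for large $t$, so that $\AAA\CCC<0$ forces a unique positive root.

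Your explicit irreducibility check fills in a point the paper only asserts from $2a^2>0$: a nonzero $\TT_1$-component on $\mm_2\times\mm_2$, together with the off-diagonal $\TT_2,\TT_3$-components on $\mm_2\times\mm_3$, leaves no common invariant line. Also, no $t^{12}$ terms actually arise in $\CCC$, so there is nothing to cancel there.
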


\begin{proof}
	For an irreducible dHYM connection to exist, the quadratic Equation \eqref{eq:Quadratic_Equation} for $2a^2$ must have a non-negative solution. Given that for $t \gg 1$ we have $-\BBB<0$ according to Lemma \ref{lem:Coefficient_Large_Radius}, we must therefore have $\sqrt{\BBB^2 - 4 \AAA \CCC} > \BBB$ which is the case if and only if $\AAA \CCC < 0$. Given that $\AAA>0$ for large $t$, we must have $\CCC< 0$. This condition is, due to Equation \eqref{eq:signCvsslopecomparison}, equivalent to asking that
	\[\mu(L_{\rho_1}) < \mu(V_\rho),\]
	or equivalently
	\[\mu(L_{\rho_2}) > \mu(V_\rho).\]
	This condition is satisfied for any K\"ahler forms $\omega$ whose cohomology class lies in the non-empty cone $\mathcal{K}_2$ obtained in Corollary \ref{cor:Cones}. Suppose that $\omega$ is such a class. For sufficiently large $t$ we have that there is a positive solution
	\[2a^2 = \frac{-\BBB + \sqrt{\BBB^2 - 4 \AAA \CCC}}{2 \AAA }.\]
	Let 
	\begin{align*}
		\AAA_0 & := \frac{B_1^2 B_2^2 B_3^2}{3} , 
		&\BBB_0 & := B_1^2 B_2^4 B_3^4 , 
		&\CCC_0 & := B_1^2 B_2^2 B_3^2 \sum_{i=1}^3 \epsilon_i m_i B_j^2 B_k^2,
	\end{align*}
	so that 
	\begin{align*}
		\AAA & = t^6 \AAA_0 + O(t^2) , 
		&\BBB & =  t^{10} \BBB_0 +O(t^6) , 
		&\CCC  &=  t^{10} \CCC_0  + O(t^6) .
	\end{align*}
	In particular, we find that
	\[2a^2 = t^4 \left( \frac{-\BBB_0 + \sqrt{\BBB_0^2 - 4 \AAA_0 \CCC_0 t^{-4} }}{2 \AAA_0 } \right) + O(t^3)>0,\]
	 In particular, this gives an irreducible solution the dHYM equation.
\end{proof}

\subsection{The small radius regime}

Finally we turn to the main contribution of this article. In Lemma \ref{lem:Coefficient_Small_Radius}, we compute the coefficients of the quadratic Equation \eqref{eq:Quadratic_Equation} in the small radius regime and use these to prove Theorem \ref{thm:Small_Radius} which gives the existence of irreducible dHYM connections in the small radius regime.

\begin{lem}\label{lem:Coefficient_Small_Radius}
	Let $B_i>0$ be such that $B_2^2=B_1^2+B_3^2$ (so to guarantee that the Hermitian structure is K\"ahler) and consider $A_i=tB_i$ for $i \in \{ 1,2,3\}$ with $t \ll 1$. Then, as $t\to0$, we have
	\begin{align}
		\AAA & =  t^2 \frac{\sumcyclic \epsilon_i B_i^2 (p_jp_k +m_jm_k)}3 + O(t^6)  , \label{eq:Asmall}\\
		\BBB & = t^2 \biggl( \bigl( m_2m_3+p_2p_3 + \frac{m_1}{3} \bigr) \sumcyclic \epsilon_i B_i^2 (p_jp_k +m_jm_k)  \biggr.\notag\\* &\phantom{=t^2 \biggl(-}    - \bigl( p_1 p_2 p_3 + \sumcyclic p_i m_j m_k \bigr)( p_2\epsilon_3 B_3^2 + p_3\epsilon_2 B_2^2 )  \biggr) +O(t^6) , \label{eq:Bsmall}\\
		\CCC & = t^{2}  \biggl( m_1 m_2 m_3 + \sumcyclic m_i p_jp_k \biggr) \sumcyclic \epsilon_i B_i^2 (p_jp_k +m_jm_k)\notag \\*
		& \quad- t^2 \biggl( p_1p_2p_3 +\sumcyclic p_im_jm_k \biggr) \sumcyclic  \epsilon_i B_i^2 ( p_j m_k + m_j p_k )  
	+ O(t^6).\label{eq:Csmall}
	\end{align}
\end{lem}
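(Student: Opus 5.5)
The plan is to substitute $A_i=tB_i$ directly into the defining formulas \eqref{eq:defA}, \eqref{eq:defB}, \eqref{eq:defC}. Each coefficient is then a polynomial in $t^2$, and we sort terms by their power of $t$. The building blocks have clear degrees. The product $A_1^2A_2^2A_3^2=t^6B_1^2B_2^2B_3^2$ is $O(t^6)$. The sum $\sumcyclic \epsilon_i A_i^2(p_jp_k+m_jm_k)=t^2\sumcyclic \epsilon_i B_i^2(p_jp_k+m_jm_k)$ is exactly of order $t^2$. Terms such as $\epsilon_iA_j^2A_k^2$ and $A_2^2A_3^2$ are $O(t^4)$. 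Terms linear in the $A_i^2$, for instance $p_2\epsilon_3A_3^2+p_3\epsilon_2A_2^2$ and $\sumcyclic \epsilon_iA_i^2(p_jm_k+m_jp_k)$, are exactly $t^2$ times the corresponding $B$-expression. Finally, $p_1p_2p_3+\sumcyclic p_im_jm_k$ and $m_1m_2m_3+\sumcyclic m_ip_jp_k$ are $O(1)$. For \eqref{eq:Asmall} this is immediate: $3\AAA=t^6B_1^2B_2^2B_3^2+t^2\sumcyclic\epsilon_iB_i^2(p_jp_k+m_jm_k)$.

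For $\BBB$, I expand the first product as $\bigl(t^2 S+O(t^6)\bigr)\bigl(m_2m_3+p_2p_3+\tfrac{m_1}{3}+O(t^4)\bigr)$, where $S:=\sumcyclic \epsilon_i B_i^2(p_jp_k+m_jm_k)$. This contributes $t^2(m_2m_3+p_2p_3+\tfrac{m_1}{3})S$ plus a remainder of order $t^6$. The second product is $t^2(p_2\epsilon_3B_3^2+p_3\epsilon_2B_2^2)$ times $\bigl(p_1p_2p_3+\sumcyclic p_im_jm_k+O(t^4)\bigr)$, where the $O(t^4)$ term is $\sumcyclic p_i\epsilon_iA_j^2A_k^2$. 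This gives the displayed $t^2$ term plus $O(t^6)$, which proves \eqref{eq:Bsmall}. The same bookkeeping handles $\CCC$. In the first product the factor $\sumcyclic m_i\epsilon_iA_j^2A_k^2$ is $O(t^4)$, and it multiplies something that is $O(t^2)$. In the second product the $O(t^4)$ part of the $p$-cubic multiplies a term of exact order $t^2$. Both cross terms are therefore $O(t^6)$, and what remains is \eqref{eq:Csmall}.

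The only point requiring care is the remainders. We need every cross term to be genuinely $O(t^6)$, with no stray $t^4$ contributions. This holds because each factor is a sum of homogeneous pieces in $t^2$ of degrees $0,2,4,6$. The lowest-order piece of the "geometric" factors, namely $\AAA$-type and linear-in-$A_i^2$ factors, has degree $t^2$ with no constant term. Meanwhile the correction to the constant-order factors is of degree $t^4$ or higher. Products therefore split as $t^2\cdot O(1)$ plus $O(t^6)$. I would check each of the four products term by term against this degree count. The main possible obstacle is the $t^6$ and $t^8$ leftovers, which must be collected into the $O(t^6)$ term. This is routine since $t\le 1$, but I would verify it explicitly.
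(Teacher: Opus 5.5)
Your proposal is correct and is essentially the paper's proof, which simply substitutes $A_i=tB_i$ into the defining formulas for $\AAA$, $\BBB$, $\CCC$ and reads off the $t^2$ terms. Your degree bookkeeping is right; in fact only the powers $t^2$, $t^6$ and $t^{10}$ occur, so there are no $t^8$ leftovers to worry about.
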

\begin{proof}
	The proof follows from simply replacing $A_i=tB_i$, for $i \in \{ i,2,3\}$, into the formulas for $\AAA$, $\BBB$, and $\CCC$ given by Equations \eqref{eq:defA}, \eqref{eq:defB}, and \eqref{eq:defC}. 
\end{proof}

\begin{theorem}\label{thm:Small_Radius}
		Let $\rho=(-ir_1,0)$ and consider the complex vector bundle $V_\rho=L_{\rho_1} \oplus \underline{\bbC}$. Then, for any K\"ahler class $[\omega] \in \Homology^2(\bbF, \Reals)$, there is $\tau>0$ such that for all $t<\tau$, there is an irreducible dHYM connection on $V_\rho$ with respect to the K\"ahler form $t\omega$ . 
\end{theorem}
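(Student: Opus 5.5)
The plan is to reduce Theorem \ref{thm:Small_Radius} to the quadratic Equation \eqref{eq:Quadratic_Equation} and show that it has a positive root $2a^2$ for all small $t$.

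\textbf{Setup.} Take $\rho_1=\beta_m$ with $m=(m_1,m_3)=(-2,1)$, so $m_2=1$ and $\ii\rho_1=r_1$ up to the sign convention of Remark \ref{rem:Weyl}, and take $\rho_2=0$. Then $\rho_1-\rho_2=\beta_m$ and $\rho_1+\rho_2=\beta_p$ with $p=m$. In particular $p=(-2,1)\in\Zeven\times\Zodd$, so the Proposition preceding this subsection applies. It says that the invariant connection \eqref{eq:invariant connection} is dHYM for $t\omega$ if and only if $x:=2a^2$ solves $x^2\AAA+x\BBB+\CCC=0$. Writing $[\omega]$ via $A_i=tB_i$ with $B_2^2=B_1^2+B_3^2$ covers every homogeneous Kähler form. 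It therefore suffices to find, for all small $t$, a root $x>0$ of this quadratic. The corresponding $a\neq 0$ gives a connection with nonzero off-diagonal component $F_2=aR_{23}$. That component does not preserve the splitting $L_{\rho_1}\oplus\underline{\bbC}$, which will give irreducibility.

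\textbf{Main obstacle and how to handle it.} Plugging $p=m$ into Lemma \ref{lem:Coefficient_Small_Radius}, the $t^2$ terms of $\BBB$ and $\CCC$ cancel. So the leading-order analysis is degenerate, and I will instead use the exact formulas \eqref{eq:defA}--\eqref{eq:defC}. Set $V:=A_1^2A_2^2A_3^2$ and $M:=m_1m_2m_3+\sumcyclic m_i m_jm_k=-8$. Set $Q:=\sumcyclic m_i\epsilon_iA_j^2A_k^2=O(t^4)$ and $S:=\sumcyclic\epsilon_iA_i^2\,2m_jm_k$. Using $A_2^2=A_1^2+A_3^2$ one computes $S=6A_1^2$.

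Since $p=m$, the first factor of the second term of $\CCC$ equals $M+Q$, the same as the first factor of its first term. Also $\sumcyclic\epsilon_iA_i^2(p_jm_k+m_jp_k)=S$. Hence
\[
\CCC=(M+Q)(V+S)-(M+Q)S=(Q-8)V=-8B_1^2B_2^2B_3^2\,t^6+O(t^{10})<0 .
\]
Similarly $p_2\epsilon_3A_3^2+p_3\epsilon_2A_2^2=A_3^2-A_2^2=-A_1^2$ and $m_2m_3+p_2p_3+\tfrac{m_1}{3}=\tfrac43$. These give
\[
\BBB=(V+6A_1^2)\bigl(\tfrac43+A_2^2A_3^2\bigr)+A_1^2(Q-8)=O(t^6),
\qquad
\AAA=\tfrac13(V+6A_1^2)=2B_1^2t^2+O(t^6)>0 .
\]
These cancellations are the step I would check most carefully, since everything depends on them.

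\textbf{Conclusion.} For small $t$ we have $\AAA>0>\CCC$. The product of the roots, $\CCC/\AAA$, is therefore negative, so there is exactly one positive root
\[
x=\frac{-\BBB+\sqrt{\BBB^2-4\AAA\CCC}}{2\AAA}.
\]
Because $\BBB^2=O(t^{12})$ while $-4\AAA\CCC\sim 64B_1^4B_2^2B_3^2t^8$, this root is $x=2B_2B_3\,t^2+O(t^4)>0$, so $a^2>0$. This holds for every choice of $B_1,B_3>0$, so there is $\tau>0$, depending on $[\omega]$, such that a solution exists for all $t<\tau$.

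\textbf{Irreducibility.} It remains to note that the resulting connection is irreducible. If it were reducible, it would preserve a line subbundle. The curvature would then be simultaneously diagonalizable at each point. But $F_1$ is diagonal with $\TT_1$-component $\tfrac{d\beta_m}{2}-\tfrac{2a^2}{A_1^2}\omega_1\neq 0$, while $F_2=aR_{23}\neq0$ and $F_3=aI_{23}\neq0$ are not. So no invariant line can be preserved. To cover possibly non-invariant line subbundles, I would also invoke the holonomy algebra, which contains $\TT_1,\TT_2,\TT_3$ and hence all of $\su(2)$.
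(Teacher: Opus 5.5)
Your proposal is correct and follows the paper's proof. Like the paper, you specialize to $p=m=(-2,1)$, note that the $t^2$ terms of $\BBB$ and $\CCC$ cancel, compute exactly $\CCC=(4m_1m_2m_3+Q)A_1^2A_2^2A_3^2=-8B_1^2B_2^2B_3^2t^6+O(t^{10})$ and $\AAA=2B_1^2t^2+O(t^6)$, and conclude from $\AAA>0>\CCC$ that the quadratic has a positive root $2a^2$. Your explicit root asymptotics and the curvature/holonomy irreducibility check are details the paper leaves implicit, but they do not change the route.
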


\begin{proof}In this context, $\beta_m=\rho_1-\rho_2=-\ii r_1=\beta_{(-2,1)}$, while $\beta_p=\rho_1+\rho_2=\beta_m$. So $(p_1,p_3)=(m_1,m_3)=(-2,1)$, and $p_2=m_2=1$. Substituting these values in Equations \eqref{eq:Asmall}, \eqref{eq:Bsmall}, \eqref{eq:Csmall}, we find that as $t\to 0$ we have
	\begin{align*}
		\AAA&= \frac23(\epsilon_1B_1^2-2\epsilon_2B_2^2-2\epsilon_3B_3^2)t^2+O(t^6),\\
		\BBB&= \frac{8}3(\epsilon_1B_1^2+\epsilon_2B_2^2+\epsilon_3B_3^2)t^2+O(t^6),\\
		\CCC&=O(t^6).\\
	\end{align*}
To pick up the next term in the asymptotic of $\CCC$, we impose $p=m$ in Equation \eqref{eq:defC}, and obtain
\begin{equation*}
	\CCC|_{p=m}=\left( 4m_1 m_2 m_3 + t^4\sumcyclic  \epsilon_i m_iB_j^2 B_k^2 \right)  B_1^2 B_2^2 B_3^2t^6. 
\end{equation*}
Substituting the actual values for $m$, we then obtain
$\CCC=-8B_1^2 B_2^2 B_3^2t^6+O(t^{10})$.
Substituting $(\epsilon_1,\epsilon_2,\epsilon_3)=(1,-1,1)$, and using $B_2^2=B_1^2+B_3^2$, we find  that $\BBB=O(t^6)$, so we also need to pick up the next term in the asymptotic. After doing so, we find
\begin{align*}
	\AAA&= 2B_1^2 t^2+O(t^6),\\
	\BBB&=\bigl(B_1^6+\frac{16}3B_1^2B_2^2B_3^2\bigr)t^6+O(t^{10}),\\
	\CCC&=-8B_1^2 B_2^2 B_3^2t^6+O(t^{10}).
\end{align*}

	In particular, we find that for small enough $t>0$, $\AAA$ is positive and $\CCC$ is negative, and therefore $-\AAA\CCC>0$.  The quadratic Equation \eqref{eq:Quadratic_Equation} thus always has a positive root
	\[2a^2 = \frac{-\BBB + \sqrt{\BBB^2 - 4 \AAA \CCC }}{2 \AAA}.\]
	The proof is now complete.
\end{proof}

\begin{remark} Since $\BBB$ is positive for small $t$, the the other root of the quadratic equation is negative and therefore it cannot be equal to $2a^2$ for some real $a$.
\end{remark}

\appendix

\section{An important necessary condition}

\begin{thm}
	Let $\kappa$ be a solution to the dHYM equation with angle $\theta \in \left(- \frac{n\pi}{2} ,  \frac{n\pi}{2} \right)$. Then, for any hypersurface $\surface\subset X$
	\[\int_\surface e^{-\ii \left(\theta- \frac{\pi}{2}\right)} (\omega + \ii \kappa)^{n-1} >0 .\]
	In other words
	\[\Im \Bigl( \frac{Z_{\surface}(\kappa)}{Z_{\bbF_2}(\kappa)}\Bigr) >0 .\]
\end{thm}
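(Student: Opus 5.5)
The plan is a pointwise positivity argument on $\surface$, followed by integration. Since $\kappa$ solves dHYM with lifted angle $\theta$, at each point we diagonalise $\kappa$ with respect to $\omega$, with eigenvalues $\lambda_1,\dots,\lambda_n$ satisfying $\sum_i \arctan\lambda_i = \theta$ exactly (not just mod $\pi$, since $\theta$ is the lifted angle). The aim is to show that the restriction of $\mathrm{Im}\bigl(e^{-\ii\theta}(\omega+\ii\kappa)^{n-1}\bigr)$ to any complex hypersurface, which is the real part of $e^{-\ii(\theta-\pi/2)}(\omega+\ii\kappa)^{n-1}$, is a positive volume form. Once this holds we integrate over $\surface$. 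The identity with the central-charge quotient then follows because $Z_X(\kappa)$ has argument $\theta$ up to a positive factor and a fixed phase coming from the normalisation in Definition~\ref{def:Z}. One checks this against Lemma~\ref{lem:central_charges}: there $Z_{\bbF_2} = -\ii\,\cdot$ a positive multiple of $e^{\ii\theta}$, and $Z_\surface$ is a positive multiple of $\int_\surface(\omega+\ii\kappa)^2$.

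The pointwise step goes as follows. Fix $x\in\surface$ and let $W=T_x\surface$, a complex hyperplane in $T_xX$. The restriction of $\kappa$ to $W$ has eigenvalues $\mu_1,\dots,\mu_{n-1}$ relative to $\omega|_W$. By Cauchy interlacing for Hermitian forms, $\lambda_1\ge\mu_1\ge\lambda_2\ge\dots\ge\mu_{n-1}\ge\lambda_n$ after ordering. Hence
\[
\Theta_W:=\sum_{j=1}^{n-1}\arctan\mu_j
\]
satisfies $\theta-\arctan\lambda_1\ge\Theta_W\ge\theta-\arctan\lambda_n$ in the appropriate sense. More precisely, $\sum_{j}\arctan\lambda_{j+1}\le\Theta_W\le\sum_j\arctan\lambda_j$, so $\theta-\arctan\lambda_1\le\Theta_W\le\theta-\arctan\lambda_n$. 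Since $\arctan\in(-\pi/2,\pi/2)$, this gives $\Theta_W\in(\theta-\pi/2,\theta+\pi/2)$. The restricted form $(\omega+\ii\kappa)^{n-1}|_W$ equals $(n-1)!\prod_j\sqrt{1+\mu_j^2}\,e^{\ii\Theta_W}\,\mathrm{vol}_W$. After multiplying by $e^{-\ii(\theta-\pi/2)}$, the phase becomes $\Theta_W-\theta+\pi/2\in(0,\pi)$, and so the imaginary part is strictly positive.

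This shows that it is the imaginary part of $e^{-\ii(\theta-\pi/2)}(\cdots)$, that is $\mathrm{Re}(e^{-\ii\theta}\cdots)$ up to sign conventions, that is positive. I will therefore carefully reconcile which of real and imaginary parts the statement intends, tracking the factor $-\ii^{\,\cdot}$ coming from $Z_V=-\int_V e^{-\ii(\omega+\ii\kappa)}$ for $\dim V=n-1$ versus $\dim X=n$. The two central charges differ by a factor $\ii$, which is exactly the $\pi/2$ shift. Hence $\mathrm{Im}(Z_\surface/Z_X)$ is a positive multiple of $\int_\surface \mathrm{Im}\bigl(e^{-\ii\theta}(\omega+\ii\kappa)^{n-1}\bigr)$ up to the sign fixed by this bookkeeping, and that integral is positive by the pointwise step.

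I expect the main obstacle to be the interlacing step together with the use of the lifted (not mod $\pi$) angle. Interlacing needs $\surface$ smooth at the point, which forces us to work on the regular locus. The singular set has measure zero, so the integral is unaffected. We also need the equality $\sum\arctan\lambda_i=\theta$ exactly, which I would justify by continuity: the function $\Theta_\omega(\kappa)$ is continuous on connected $X$, congruent to $\theta$ mod $\pi$, and hence constant.
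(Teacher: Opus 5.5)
Your pointwise argument is correct, and it takes a genuinely different route from the paper's. The paper never restricts $\kappa$ to $T_xS$. It works in an ambient coframe diagonalising $\omega$ and $\kappa$ and expands $(\omega+\ii\kappa)^{n-1}=(n-1)!\sum_j\prod_{i\neq j}(1+\ii\lambda_i)\,\widehat{\omega}_j$, where $\widehat{\omega}_j=\prod_{i\neq j}\tfrac{\ii}{2}\alpha_i\wedge\overline{\alpha_i}$. After multiplication by $e^{-\ii(\theta-\pi/2)}$, the $j$-th coefficient has argument $\tfrac{\pi}{2}-\arctan\lambda_j\in(0,\pi)$. Each $\widehat{\omega}_j$ restricts to a complex hyperplane as a nonnegative multiple of the volume form, and not all of these multiples vanish. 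So the restricted form is a positive combination of numbers in the open upper half-plane. This needs only $|\arctan\lambda_j|<\pi/2$ and weak positivity of the $\widehat{\omega}_j$, with no interlacing. Your version instead gives the restricted form a single phase and locates it: $\Theta_W\in[\theta-\arctan\lambda_1,\theta-\arctan\lambda_n]$. That interval is exactly the convex hull of the paper's phases $\theta-\arctan\lambda_j$, so the two arguments encode the same inequality. Your remark about integrating over the regular locus addresses a point the paper passes over.

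What must be corrected is the bookkeeping, which you state wrongly twice. First, your announced aim, that $\Im\bigl(e^{-\ii\theta}(\omega+\ii\kappa)^{n-1}\bigr)$ restricts positively to hypersurfaces, is false. On $W$ its phase is $\Theta_W-\theta\in(-\pi/2,\pi/2)$, so it has no definite sign; for $\kappa=0$ it vanishes identically. Moreover $\Re\bigl(e^{-\ii(\theta-\pi/2)}Y\bigr)=-\Im(e^{-\ii\theta}Y)$, not $\Im(e^{-\ii\theta}Y)$. Second, your closing claim that $\Im(Z_S/Z_X)$ is a positive multiple of $\int_S\Im\bigl(e^{-\ii\theta}(\omega+\ii\kappa)^{n-1}\bigr)$ ``up to sign'' fails for the same reason. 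A factor of $\ii$ swaps real and imaginary parts, and no sign convention undoes that.

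The correct identification takes one line. Use $Z_V=-\frac{(-\ii)^k}{k!}\int_V(\omega+\ii\kappa)^k$ for $\dim_{\mathbb{C}}V=k$, and $\int_X(\omega+\ii\kappa)^n=r\,e^{\ii\theta}$ with $r>0$ (this is where $\theta=\Theta_\omega(\kappa)$ exactly is used). Then
\[
\frac{Z_S}{Z_X}=\frac{n\,\ii}{r}\,e^{-\ii\theta}\int_S(\omega+\ii\kappa)^{n-1},
\]
so
\[
\Im\frac{Z_S}{Z_X}=\frac{n}{r}\,\Im\Bigl(e^{-\ii(\theta-\frac{\pi}{2})}\int_S(\omega+\ii\kappa)^{n-1}\Bigr)=\frac{n}{r}\,\Re\Bigl(e^{-\ii\theta}\int_S(\omega+\ii\kappa)^{n-1}\Bigr).
\]
This is precisely the quantity your interlacing step shows to be positive. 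Accordingly, the first inequality in the statement must be read as positivity of the imaginary part, since the integral is not real in general. That is also exactly what the paper's proof establishes.
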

\begin{proof} 
Let $\{\alpha_1,\ldots,\alpha_n\}$ be a Darboux co-framing diagonalizing both $\omega$ and $\kappa$, so that 
	\[\omega =  \frac{\ii}{2}\sum_{i=1}^n \alpha_i \wedge \overline{\alpha_i} , \ \ \text{and} \ \ \kappa = \frac{\ii}{2}\sum_{i=1}^n \lambda_i  \alpha_i \wedge \overline{\alpha_i}. \]
	Recall that $\theta=\sum_{i=1}^n \arctan(\lambda_j)$ and therefore, for any $j \in \lbrace 1, \ldots , n \rbrace$,
	\[\theta - \frac{\pi}{2} < \sum_{i \neq j} \arctan (\lambda_i ) < \theta + \frac{\pi}{2} ,\]
 thus
	\begin{equation}\label{eq:sum_of_arctan_minus_one}
		0 <  \sum_{i \neq j} \arctan(\lambda_i) - \bigl(\theta- \frac{\pi}{2}\bigr) < \pi.
	\end{equation}
	
	Then, $\omega + i \kappa =\frac{\ii}{2} \sum_{i=1}^n ( 1+\ii\lambda_i )  \alpha_i \wedge \overline{\alpha_i} $ and therefore
	\begin{align*}
		(\omega +\ii \kappa)^{n-1} = \sum_{j=1}^n \left( \prod_{i \neq j} \sqrt{1+\lambda_i^2} \right) \exp \left(\ii \sum_{i \neq j} \arctan(\lambda_i)\right) \left( \prod_{i \neq j} \frac{\ii}{2} \alpha_j \wedge \overline{\alpha_j} \right).
	\end{align*}
	Furthermore, for any complex hyperplane $v \subset (T_{\bbC}X)_p$ we have $c_j(v) = \prod_{i \neq j} \frac{\ii}{2} \alpha_j \wedge \overline{\alpha_j}  (v) \in \Reals$ is nonnegative, with at least one being positive, and therefore
	\begin{align*}
		\Im \left( e^{-\ii (\theta- \frac{\pi}{2})} (\omega + \ii \kappa)^{n-1} (v) \right) & = \sum_{j=1}^n  c_j (v) \prod_{i \neq j} \sqrt{1+\lambda_i^2}  \Im \exp \left(\ii \left( \sum_{i \neq j} \arctan(\lambda_i) - (\theta- \frac{\pi}{2}) \right) \right).
	\end{align*}
	Combining this with Equation \eqref{eq:sum_of_arctan_minus_one}, we conclude that
	\begin{align*}
		\Im \left( e^{-\ii (\theta- \frac{\pi}{2})} (\omega + \ii \kappa)^{n-1} (v) \right) & >0 ,
	\end{align*}
	which upon integration over $\surface$ yields the desired result.
\end{proof}

\printbibliography
\end{document}

\typeout{get arXiv to do 4 passes: Label(s) may have changed. Rerun}